\newtheorem{prop}{Proposition}[section]
\newtheorem{thm}[prop]{Theorem}
\newtheorem{coroll}[prop]{Corollary}
\newtheorem{lemma}[prop]{Lemma}
\theoremstyle{remark}
\newtheorem{rmk}[prop]{Remark}
\newcommand{\ep}[1]{{#1}^\varepsilon}
\newcommand{\ip}[2]{\left\langle #1,#2 \right\rangle}
\newcommand{\norm}[1]{\left\| #1 \right\|}
\newcommand{\bnorm}[1]{\bigl\| #1 \bigr\|}
\newcommand{\lip}{\dot{C}^{0,1}}
\newcommand{\m}{\bar{\mu}}
\newcommand{\E}{\mathbb{E}}
\renewcommand{\H}{\mathbb{H}}
\renewcommand{\L}{\mathbb{L}}
\newcommand{\LL}{\mathcal{L}}
\renewcommand{\P}{\mathbb{P}}
\newcommand{\erre}{\mathbb{R}}
\newcommand{\enne}{\mathbb{N}}
\title{Approximation and convergence of solutions to semilinear
  stochastic evolution equations with jumps}
\author{Carlo Marinelli\thanks{Institut f\"ur Angewandte Mathematik,
    Universit\"at Bonn, Germany, and Facolt\`a di Economia, Libera
    Universit\`a di Bolzano, Italy. URL:
    \texttt{http://www.uni-bonn.de/$\sim$cm788}.} \and Luca Di
  Persio\thanks{Dipartimento di Matematica, Universit\`a di Trento,
    Italy.} \and Giacomo Ziglio\thanks{Dipartimento di Matematica,
    Universit\`a di Trento, Italy.}}
\date{\normalsize 24 May 2012}
\begin{document}
\maketitle

\begin{abstract}
  We prove that the mild solution to a semilinear stochastic evolution
  equation on a Hilbert space, driven by either a square
  integrable martingale or a Poisson random measure, is (jointly)
  continuous, in a suitable topology, with respect to the initial
  datum and all coefficients. In particular, if the leading linear
  operators are maximal (quasi-)monotone and converge in the strong
  resolvent sense, the drift and diffusion coefficients are uniformly
  Lipschitz continuous and converge pointwise, and the initial
  data converge, then the solutions converge.
\end{abstract}


\section{Introduction}
Consider the stochastic evolution equation
\begin{equation}
  \label{eq:0}
  du(t) + Au(t)\,dt + f(u(t))\,dt = B(u(t-))\,dM(t),
  \qquad u(0)=u_0,
\end{equation}
on a real separable Hilbert space $H$, where $A:D(A) \subset H \to H$
is a linear quasi-maximal monotone operator, $M$ is a
Hilbert-space-valued square integrable martingale, and the
coefficients $f$, $B$ satisfy suitable Lipschitz and linear growth
conditions (see below for precise assumptions on all data of the
problem). The purpose of this work is to provide sufficient conditions
for the (sequential) continuity, in an appropriate topology, of the
map $(u_0,A,f,G) \mapsto u$, where $u$ denotes the mild solution to
\eqref{eq:0}. The same problem is considered also for equations (still
with multiplicative noise) driven by compensated Poisson random
measures. Our main results are Theorems \ref{thm:nyo2} and
\ref{thm:nyop} below. The problem we consider, apart of having its own
intrinsic interest, is also motivated by several other considerations,
such as the study of the stability of models based on stochastic
partial differential equations (SPDEs) and the convergence of
numerical approximation schemes. Moreover, as is well known, a
technique to obtain estimates for mild solutions to SPDEs consists in,
first, approximating the unbounded operator $A$ by a bounded one (such
as e.g. the Yosida approximation), so that, roughly speaking, tools
from stochastic calculus for semimartingales can be applied to the
regularized equation; then, showing that the estimates ``pass'' to the
original equation. Such regularization procedure is needed because
mild solutions, in general, are not semimartingales, so that tools
like It\^o's formula are not directly applicable. Motivated mostly by
these considerations, continuous dependence on $A$ for stochastic
convolutions against Hilbert-space-valued Wiener processes was
established already in \cite{DPKZ} (cf. also \cite[Thm.~5.12]{DZ92}),
where the authors introduced the by now classical factorization
method. Several refinements of this result, all relying on the
factorization method, have appeared in the literature, the most
sophisticated of which is given in the recent work \cite{KvN1}, where
stochastic equations on UMD Banach spaces driven by a cylindrical
Wiener process are considered. One should also mention the related
results due to Gy\"ongy (see e.g. \cite{Gyo:aprx1,Gyo:stab1}) for
SPDEs in the variational setting driven by finite-dimensional continuous
martingales.

If the martingale $M$ is discontinuous, we are not aware of any
results about continuous dependence of the solution on the data (apart
of \cite{cm:JFA10}, where continuity and (Fr\'echet) differentiability
of the map $u_0 \mapsto u$ is investigated for equations with Poisson
noise). In fact, in the case of jump noise, the factorization method
is unfortunately no longer applicable, hence a different approach is
needed. The present paper provides such an alternative method, which
is in part inspired, perhaps somewhat unexpectedly, by techniques from
the theory of \emph{nonlinear} maximal monotone operators on Hilbert
spaces (in particular by Br\'ezis' proof in \cite{Bmax} of a nonlinear
version of Trotter-Kato's theorem). Our method, however, is restricted
to operators $A$ that are quasi-monotone, while the factorization
method is not. Therefore, in the context of equations driven by a
Wiener process, our method is not a replacement of the ``usual'' one.
Let us also mention that one can find in the literature very
satisfactory results on continuous dependence on the coefficients for
finite-dimensional stochastic differential equations driven by general
semimartingales, see e.g. \cite{Eme:stab} and
\cite[pp.~257-ff.]{Protter}. On the other hand, our continuity results
apply to those classes of stochastic evolution equations for which a
``decent'' well-posedness theory (in the mild sense) is available. In
other words, the gap with respect to the finite-dimensional results is
mainly due to the less developed well-posedness theory in infinite
dimensions.

Before concluding this introductory section with some words about
notation, let us give a brief overview of the paper: in Section
\ref{sec:main} we state the main results, whose proofs can be found in
Section \ref{sec:prove}. Section \ref{sec:prel} collects some facts
about (linear) maximal monotone operators on Hilbert spaces and on
stochastic integrals (and convolutions) with respect to
Hilbert-space-valued square integrable martingales. The core of the
paper are Sections \ref{sec:conv1} and \ref{sec:conv2}, where
continuity of stochastic convolutions with respect to the operator $A$
is established.  In particular, first we approximate $A$ by its Yosida
regularization and we prove that the correspoding stochastic
convolutions converge. Then we show that the same holds if instead of
the Yosida approximation we consider a sequence of maximal
quasi-monotone operators converging to $A$ in the strong resolvent
sense. In the last section we briefly comment on the case of equations
with additive noise.

\bigskip

\noindent
\textbf{Notation.} Given two normed spaces $E$, $F$, we
shall denote by $\lip(E,F)$ the space of Lipschitz continuous
functions from $E$ to $F$, i.e. the space of functions $\phi:E \to F$
such that
\[
\|\phi\|_{\lip(E,F)} := \sup_{x \neq y}
\frac{\|\phi(x)-\phi(y)\|_F}{\|x-y\|_E} < \infty.
\]
Whenever we write $\phi \in \lip(E,F)$, it is implicitly assumed, to
avoid nonsensical situations, that there exists $a \in E$ such that
$\|\phi(a)\|_F<\infty$. This immediately implies $\|\phi(x)\|_F \leq
N(1+\|x\|_E)$, with $N$ depending only on $\|\phi\|_{\lip(E,F)}$,
$\|a\|_E$, $\|\phi(a)\|_F$.  If $E$ and $F$ are complete, the space of
linear continuous operators, of trace class, and of Hilbert-Schmidt
operators from $E$ to $F$ will be denoted by $\LL(E,F)$, $\LL_1(E,F)$,
and $\LL_2(E,F)$, respectively.  If $E=F$, we shall simply write
$\LL(E)$ in place of $\LL(E,E)$, and similarly for other
spaces. Occasionally we shall drop the indication of the spaces $E$
and $F$ altogether if there is no risk of confusion. We shall write $a
\lesssim b$ if there exists a constant $N>0$ such that $a \leq Nb$. If
the constant $N$ depends on parameters $p_1,\ldots,p_n$, we shall also
write $N=N(p_1,\ldots,p_n)$ and $a \lesssim_{p_1,\ldots,p_n} b$.

\section{Main results}
\label{sec:main}
Let $H$ and $K$ two real separable Hilbert spaces. The inner product
and norm of $H$ will be denoted by $\ip{\cdot}{\cdot}$ and
$\|\cdot\|$, respectively. Let $A:D(A) \subset H \to H$ be a
linear (unbounded) maximal quasi-monotone operator, i.e. such that
\[
\ip{Ax}{x} + \eta \|x\|^2 \geq 0 \qquad\forall x\in D(A),
\]
for some $\eta>0$, and $R(\lambda I+A)=H$ for all $\lambda>\eta$
(range and domain of operators will be denoted by $R(\cdot)$ and
$D(\cdot)$, respectively). The strongly continuous semigroup of
quasi-contractions on $H$ generated by $-A$ will be denoted by $S$.

Let $T>0$ be fixed. All random variables and processes are assumed to
be defined on a filtered probability space
$(\Omega,\mathcal{F},\mathbb{F},\P)$,
$\mathbb{F}=(\mathcal{F}_t)_{0\leq t \leq T}$, satisfying the
``usual'' conditions. Statement involving random elements are always
meant to hold $\P$-a.s.. The space $L_p(\Omega,H)$, $p>0$, will be
denoted by $\L_p$.

Let $M$ be a $K$-valued square integrable
martingale. Further hypotheses on $M$ will be specified when
needed. For convenience, we shall say that $M$ satisfies hypothesis
(Q) if there exists a deterministic operator $Q \in \LL_1(K)$ such
that
\[
\langle\!\langle M,M \rangle\!\rangle(t) - \langle\!\langle M,M
\rangle\!\rangle(s) \leq (t-s)Q \qquad \forall 0 \leq s \leq t \leq T.
\]

Let $(Z,\mathcal{Z},m)$ be a $\sigma$-finite measure space, and $\mu$
a Poisson random measure on $Z \times [0,T]$ with compensator $m
\otimes \mathrm{Leb}$, where Leb stands for the Lebesgue measure on
$[0,T]$. The compensated measure $\mu-m\otimes\mathrm{Leb}$ will be
denoted by $\m$. We shall denote the space of functions $\phi:Z \to H$
such that $\|\phi\|_H \in L_p(Z,m)$, $p \geq 2$, by $L_p(Z)$.

For any $0 < t \leq T$, $\H_p(t)$ stands for the Banach space of
c\`adl\`ag adapted processes $u:\Omega \times [0,t] \to H$ such that
\[
\|u\|_{\H_p(t)} := \bigl( \E\sup_{s \leq t} \|u(s)\|^p \bigr)^{1/p} < \infty.
\]
We shall write $\H_p$ instead of $\H_p(T)$.

\medskip

Let us consider the equations
\begin{equation}
\label{eq:mumi}
du(t) + Au(t)\,dt + f(u(t))\,dt = B(u(t-))\,dM(t),
\qquad u(0)=u_0,
\end{equation}
and, for each $n\in\enne$,
\begin{equation}
\label{eq:troll}
du_n(t) + A_nu_n(t)\,dt + f_n(u_n(t))\,dt = B_n(u(t-))\,dM(t),
\qquad u(0)=u_{0n}.
\end{equation}
One has the following well-posedness result in $\H_2$. A proof (of a
more general result) can be found for instance in \cite{Kote-Doob}.
\begin{thm}
  Assume that $M$ satisfies hypothesis \emph{(Q)} and $u_0 \in
  \L_2(H)$. If $f \in \lip(H)$ and $B \in
  \lip\bigl(H,\LL_2(Q^{1/2}K,H)\bigr)$, then \eqref{eq:mumi} admits a
  unique mild solution $u \in \H_2$, which depends continuously on the
  initial datum $u_0$.
\end{thm}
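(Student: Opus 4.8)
The plan is to set up the standard fixed-point argument for the mild formulation of \eqref{eq:mumi} and then read off continuous dependence on $u_0$ from the contraction estimate. Writing the mild solution as the fixed point of the map
\[
\Phi(u)(t) = S(t)u_0 - \int_0^t S(t-s)f(u(s))\,ds + \int_0^t S(t-s)B(u(s-))\,dM(s),
\]
the first step is to check that $\Phi$ maps $\H_2(t)$ into itself. This uses that $S$ is a quasi-contraction semigroup (so $\|S(r)\| \leq e^{\eta r}$), the linear growth of $f$ and $B$ that follows from the Lipschitz assumption (as noted in the Notation section, $\phi\in\lip(E,F)$ forces $\|\phi(x)\|_F \leq N(1+\|x\|_E)$), and a maximal-inequality/Burkholder-type estimate for the stochastic convolution $\int_0^t S(t-s)B(u(s-))\,dM(s)$ against a square integrable martingale satisfying hypothesis (Q) — such estimates belong to the stochastic-integration material announced for Section \ref{sec:prel}. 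One obtains $\|\Phi(u)\|_{\H_2(t)} \lesssim 1 + \|u_0\|_{\L_2} + \|u\|_{\H_2(t)}$, so $\Phi$ is well defined on $\H_2(t)$.

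The second step is the contraction estimate. For $u,v \in \H_2(t)$,
\[
\Phi(u)(t) - \Phi(v)(t) = -\int_0^t S(t-s)\bigl(f(u(s))-f(v(s))\bigr)\,ds + \int_0^t S(t-s)\bigl(B(u(s-))-B(v(s-))\bigr)\,dM(s),
\]
and combining the quasi-contractivity of $S$, the Lipschitz bounds on $f$ and $B$, and the maximal inequality for the stochastic convolution yields
\[
\bnorm{\Phi(u)-\Phi(v)}_{\H_2(t)}^2 \lesssim \bigl(t^2\|f\|_{\lip}^2 + t\,\|B\|_{\lip}^2\bigr)\,\norm{u-v}_{\H_2(t)}^2 =: C(t)\,\norm{u-v}_{\H_2(t)}^2,
\]
with $C(t)\to 0$ as $t\to 0$. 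Hence for $t=t_0$ small enough $\Phi$ is a strict contraction on $\H_2(t_0)$ and has a unique fixed point there; a standard concatenation over $[0,t_0],[t_0,2t_0],\dots$ (the length $t_0$ depending only on $\|f\|_{\lip}$, $\|B\|_{\lip}$, $\eta$, $Q$, not on the initial datum) produces a unique mild solution $u\in\H_2=\H_2(T)$.

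For continuous dependence on $u_0$, let $u,v$ be the mild solutions with data $u_0,v_0$. The same estimate as above, keeping the extra term $\|S(t)(u_0-v_0)\|$, gives on $[0,t_0]$
\[
\norm{u-v}_{\H_2(t_0)}^2 \lesssim \norm{u_0-v_0}_{\L_2}^2 + C(t_0)\,\norm{u-v}_{\H_2(t_0)}^2,
\]
and absorbing the last term (possible since $C(t_0)<1$) yields $\norm{u-v}_{\H_2(t_0)} \lesssim \norm{u_0-v_0}_{\L_2}$. Iterating over the finitely many subintervals of length $t_0$ propagates this to $[0,T]$, with a constant depending on $T$, $\|f\|_{\lip}$, $\|B\|_{\lip}$, $\eta$, $Q$; in particular $u_0\mapsto u$ is Lipschitz, hence continuous, from $\L_2$ to $\H_2$. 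The main technical obstacle is the maximal inequality for stochastic convolutions driven by a general square integrable martingale under hypothesis (Q): unlike the Wiener case one cannot invoke the factorization method, so this bound must be supplied from the stochastic-integration preliminaries (or, as the excerpt indicates, cited from \cite{Kote-Doob}); granting it, the rest is the routine Banach fixed-point scheme sketched above.
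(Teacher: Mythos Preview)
The paper does not supply its own proof of this theorem: immediately before the statement it says ``A proof (of a more general result) can be found for instance in \cite{Kote-Doob}'', and no argument follows. So there is nothing in the paper to compare your proposal against line by line.

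That said, your proposal is the standard Banach fixed-point argument and is correct in this setting. The one ingredient you flag as a potential obstacle---a maximal inequality for the stochastic convolution $\int_0^t S(t-s)B(u(s-))\,dM(s)$ under hypothesis (Q)---is in fact provided later in the paper as Proposition~\ref{prop:yayo}(i) (inequality \eqref{eq:maxi2}), together with the observation just above it that under (Q) one has
\[
\E\int_0^T \bigl\|\Phi(t)Q_M^{1/2}(t)\bigr\|^2_{\LL_2}\,d\langle M,M\rangle(t)
\leq \E\int_0^T \bigl\|\Phi(t)Q^{1/2}\bigr\|^2_{\LL_2}\,dt.
\]
Combining this with the Lipschitz assumption $B\in\lip(H,\LL_2(Q^{1/2}K,H))$ gives exactly the bound you need for the contraction step. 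With that inequality in hand, your estimates, the small-time contraction, the concatenation, and the Lipschitz dependence on $u_0$ all go through as written.
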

\noindent
Clearly, if, for each $n\in\enne$, $(u_{0n},f_n,B_n)$ satisfy the same
type of assumptions, then \eqref{eq:troll} is also well-posed in
$\H_2$.

\medskip

Our first main result, whose proof is postponed to
\S\ref{ssec:prova1}, is the following.
\begin{thm}     \label{thm:nyo2} 
  Assume that $M$ satisfies hypothesis \emph{(Q)}.  Moreover, assume
  that
  \begin{itemize}\setlength{\itemsep}{0pt}
  \item[\emph{(i)}] for each $n \in \enne$, there exists $\eta_n \leq
    \eta$ such that $A_n+\eta_n I$ is a linear maximal monotone
    operator on $H$, and there exists
    $\lambda_0>0$ such that $(I+\lambda A_n)^{-1}h \to (I+\lambda
    A)^{-1}h$ as $n \to \infty$ for all $h \in H$ and
    $0<\lambda<\lambda_0$;
  \item[\emph{(ii)}] there exists a constant $L_f>0$ such that $f_n\in
    \lip(H)$ with $\|f_n\|_{\lip} + \|f\|_{\lip} \leq L_f$ for
    all $n \in \enne$ and $f_n \to f$ pointwise as $n \to \infty$;
  \item[\emph{(iii)}] there exists a constant $L_B>0$ such that $B_n
    \in \lip(H,\LL_2(Q^{1/2}K,H))$ with $\|B_n\|_{\lip} +
    \|B\|_{\lip} \leq L_B$ for all $n \in \enne$ and $B_n \to B$ pointwise
    as $n \to \infty$;
  \item[\emph{(iv)}] $u_{0n} \in \L_2$ for all $n\in\enne$ and $u_{0n}
    \to u_0$ in $\L_2$ as $n \to \infty$.
  \end{itemize}
  Let $u$ and $u_n$ be the mild solutions to \eqref{eq:mumi} and
  \eqref{eq:troll}, respectively. Then $u_n \to u$ in $\H_2$ as $n
  \to \infty$, that is
  \[
  \lim_{n \to \infty} \E\sup_{t\leq T}\,
    \bigl\| u_n(t) - u(t) \bigr\|^2 = 0.
  \]
\end{thm}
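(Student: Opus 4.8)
The natural strategy is to reduce everything to the convergence of stochastic convolutions and then close a fixed-point-type estimate. Recall that the mild solution of \eqref{eq:mumi} satisfies
\[
u(t) = S(t)u_0 - \int_0^t S(t-s)f(u(s))\,ds + \int_0^t S(t-s)B(u(s-))\,dM(s),
\]
and similarly $u_n$ with $(S_n,f_n,B_n,u_{0n})$, where $S_n$ is the semigroup generated by $-A_n$. Since by hypothesis (i) all the operators $A_n+\eta_n I$ (with $\eta_n\le\eta$) are maximal monotone and the resolvents converge strongly, the Trotter--Kato theorem gives $S_n(t)h \to S(t)h$ in $H$, uniformly for $t$ in compact sets, for every $h\in H$; moreover the quasi-contraction bounds $\|S_n(t)\|\le e^{\eta t}$ are uniform in $n$. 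This is the ingredient that replaces the factorization method: in the jump case one works directly with the Doob and Bichteler--Jacod (or Kunita) maximal inequalities for the martingale $\|\int_0^\cdot S(\cdot-s)\Phi(s)\,dM(s)\|_{\H_2}\lesssim \big(\E\int_0^T\|S(T-s)\Phi(s)\|_{\LL_2(Q^{1/2}K,H)}^2\,ds\big)^{1/2}$, which hold because $S$ is a bounded (quasi-contraction) semigroup and require no analytic smoothing.

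I would then split $u_n-u$ into three groups of terms. First, the ``frozen-coefficient'' differences: $(S_n(t)-S(t))u_0$, $\int_0^t(S_n(t-s)-S(t-s))f(u(s))\,ds$, and $\int_0^t(S_n(t-s)-S(t-s))B(u(s-))\,dM(s)$ — these tend to $0$ in $\H_2$ by the strong-resolvent/Trotter--Kato convergence together with dominated convergence (the uniform bound $e^{\eta T}$ plus $u\in\H_2$ and the linear growth of $f,B$ provide the majorants; for the stochastic term one applies the maximal inequality to the integrand $(S_n(T-s)-S(T-s))B(u(s-))$, which goes to $0$ pointwise and is dominated). Call the sum of these three terms $\varepsilon_n\to 0$ in $\H_2$. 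Second, the coefficient-replacement differences involving $f_n-f$ and $B_n-B$ evaluated along $u$: e.g. $\int_0^t S_n(t-s)(f_n(u(s))-f(u(s)))\,ds$ and the analogous stochastic term; these also go to $0$ in $\H_2$ by (ii), (iii) (pointwise convergence of $f_n,B_n$, uniform Lipschitz bounds, linear growth, dominated convergence), and the datum term $S_n(t)(u_{0n}-u_0)$ is controlled by $e^{\eta T}\|u_{0n}-u_0\|_{\L_2}\to 0$ by (iv). Third, the ``same-coefficient'' remainder $\int_0^t S_n(t-s)(f_n(u_n(s))-f_n(u(s)))\,ds + \int_0^t S_n(t-s)(B_n(u_n(s-))-B_n(u(s-)))\,dM(s)$, which by the uniform Lipschitz constants $L_f,L_B$, Doob's inequality and the martingale maximal inequality is bounded by $C\int_0^t \|u_n-u\|_{\H_2(s)}^2\,ds$ with $C=C(L_f,L_B,Q,T)$ independent of $n$.

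Putting these together yields, for a constant $C$ independent of $n$,
\[
\|u_n-u\|_{\H_2(t)}^2 \;\le\; C\,\|\varepsilon_n'\|_{\H_2}^2 \;+\; C\int_0^t \|u_n-u\|_{\H_2(s)}^2\,ds,
\]
where $\varepsilon_n'\to 0$ in $\H_2$ collects all the terms from the first two groups, and Gronwall's lemma gives $\|u_n-u\|_{\H_2}^2 \le C e^{CT}\|\varepsilon_n'\|_{\H_2}^2\to 0$, which is the assertion. The main obstacle is the rigorous justification that the stochastic convolution terms in the first group vanish: one must verify that $(S_n(T-\cdot)-S(T-\cdot))B(u(\cdot-))\to 0$ in $L^2(\Omega\times[0,T];\LL_2(Q^{1/2}K,H))$. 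Pointwise (in $(\omega,s)$) convergence in the Hilbert--Schmidt norm follows from strong convergence $S_n(t)\to S(t)$ only after noting that $B(u(s-))$ is Hilbert--Schmidt with a square-integrable majorant, so that $S_n(T-s)B(u(s-))\to S(T-s)B(u(s-))$ in $\LL_2$ (strong operator convergence composed with a fixed Hilbert--Schmidt operator converges in Hilbert--Schmidt norm); then dominated convergence with majorant $2e^{\eta T}\|B(u(s-))\|_{\LL_2}$, which lies in $L^2(\Omega\times[0,T])$ by the linear growth of $B$ and $u\in\H_2$, finishes the argument. This is precisely the point where the Wiener-case factorization method is unavailable and the direct maximal-inequality approach, together with the Trotter--Kato convergence of the semigroups, does the work. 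The same scheme, with $\m$ and the jump-type maximal inequality in place of $M$ and hypothesis (Q), will prove Theorem~\ref{thm:nyop}.
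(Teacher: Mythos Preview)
There is a genuine gap in your treatment of the ``frozen-coefficient'' stochastic convolution term
\[
\int_0^t \bigl(S_n(t-s)-S(t-s)\bigr)B(u(s-))\,dM(s).
\]
The maximal inequality you invoke is not of the form you state: the inequality that is actually available (Proposition~\ref{prop:yayo}) reads
\[
\Bigl\|\int_0^\cdot S(\cdot-s)\Phi(s)\,dM(s)\Bigr\|_{\H_2}^2 \lesssim_\eta \E\int_0^T \|\Phi(s)Q_M^{1/2}(s)\|_{\LL_2}^2\,d\langle M,M\rangle(s),
\]
with \emph{no} $S(T-s)$ on the right. The semigroup is absorbed precisely because the proof uses Sz.-Nagy dilation of the (quasi-)contraction semigroup $S$ to a unitary group, so that the stochastic convolution becomes a genuine martingale to which Doob's inequality applies. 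This argument requires $S$ to be a semigroup; it does \emph{not} apply to the kernel $S_n(t-s)-S(t-s)$, which is not a semigroup in $t-s$, and the integrand itself depends on $t$, so there is no martingale to which Doob can be applied. Your dominated-convergence argument gives only $\E\|Y_n(t)-Y(t)\|^2 \to 0$ for each fixed $t$, not convergence of $\E\sup_{t\le T}\|Y_n(t)-Y(t)\|^2$.

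This is exactly the obstacle that forces the paper to take a different route. The paper first proves (Theorems~\ref{thm:yo2sc} and~\ref{thm:nyo2sc}) that $S_n\diamond\Phi \to S\diamond\Phi$ in $\H_2$ by a detour through the Yosida approximations $A_\lambda$, $A_{n\lambda}$: since these are bounded, the corresponding regularized convolutions are \emph{strong} solutions, and one can subtract them and use the monotonicity of $A$ (in the spirit of Br\'ezis' nonlinear Trotter--Kato argument) to obtain a Cauchy-in-$\lambda$ estimate in $\H_2$, uniformly in $n$. This result is then fed as a black box (via Lemma~\ref{lm:tre}) into the Gronwall scheme you outline, and from that point on your argument and the paper's coincide. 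In short, your overall architecture (split, dominated convergence for the drift and data, Lipschitz bound plus Gronwall for the remainder) is correct, but the single step ``$(S_n-S)\diamond B(u_-)\to 0$ in $\H_2$'' is the heart of the matter and needs the machinery of Sections~\ref{sec:conv1}--\ref{sec:conv2}, not just Trotter--Kato and dominated convergence.
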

\begin{rmk}     \label{rmk:tanti}
  (a) The type of convergence of $A_n$ to $A$ assumed in (i) is also
  called \emph{convergence in the strong resolvent sense}.
  \smallskip\par\noindent 
  (b) Hypothesis (Q) is satisfied, for instance, if $M$ has stationary
  independent increments (in particular it $M$ is a L\'evy processes
  without drift, see e.g. \cite[p.~69]{PZ-libro}). One may remove this
  assumption at the price of assuming that $B$ satisfies a ``random''
  Lipschitz condition, i.e. a condition involving a predictable
  $\LL_1$-valued process rather than the (deterministic,
  time-independent) operator $Q$. Similarly, it would be possible to
  give a convergence result in $\H_p$, assuming that $B$ satisfies a
  different ``random'' Lipschitz condition involving the quadratic
  variation of $M$. We believe that these conditions are in general
  too difficult to check, and the corresponding results are of limited
  interest.
  \smallskip\par\noindent 
  (c) One could allow the coefficients $f$ and $B$ to depend also on
  $\omega \in \Omega$ and $t \in [0,T]$, assuming that they satisfy
  suitable measurability conditions and that their Lipschitz constants
  (with respct to the $H$-valued variable) do not depend on
  $(\omega,t)$. Details are left to the interested reader.
\end{rmk}

\medskip

We now turn to the case of equations driven by compensated Poisson
random measures. Consider the equations
\begin{equation}
\label{eq:mumip}
du(t) + Au(t)\,dt + f(u(t))\,dt = \int_Z G(z,u(t-))\,\m(dz,dt),
\qquad u(0)=u_0,
\end{equation}
and, for each $n\in\enne$,
\begin{equation}
\label{eq:trollp}
du_n(t) + A_nu_n(t)\,dt + f_n(u_n(t))\,dt = \int_Z G_n(z,u_n(t-))\,\m(dz,dt),
\qquad u(0)=u_{0n}.
\end{equation}
Recall that (see \cite{cm:JFA10}) if $f \in \lip(H)$ and
\[
\Bigl(\int_Z \|G(z,u)-G(z,v)\|^2\,m(dz)\Bigr)^{p/2}
+ \int_Z \|G(z,u)-G(z,v)\|^p\,m(dz) \lesssim \|u-v\|^p
\]
for all $u$, $v \in H$, then \eqref{eq:mumip} is well-posed in
$\H_p$. A completely analogous statement obviously holds for
\eqref{eq:trollp}.
Observing that
\[
\|\Phi\|^p_{L_2(Z)} + \|\Phi\|^p_{L_p(Z)} \lesssim_p 
\max\bigl(\|\Phi\|_{L_2(Z)},\|\Phi\|_{L_p(Z)}\bigr)^p
\lesssim_p \|\Phi\|^p_{L_2(Z)} + \|\Phi\|^p_{L_p(Z)}
\]
and recalling that one can turn the intersection of $L_2(Z)$ with
$L_p(Z)$ into a Banach space with the norm
\[
\| \cdot \|_{L_2(Z) \cap L_p(Z)} := \max\bigl(
\| \cdot \|_{L_2(Z)},\| \cdot \|_{L_p(Z)}\bigr)
\]
(see e.g.~\cite[p.~9]{KPS}), the above Lipschitz condition for $G$ can
be equivalently formulated as $h \mapsto G(\cdot,h) \in
\lip(H,L_2(Z) \cap L_p(Z))$.

\medskip

Our second main result is the following.
\begin{thm}     \label{thm:nyop} 
  Let $p \in [2,\infty[$. Assume that $A_n$ and $f_n$, $n\in\enne$,
  satisfy hypotheses \emph{(i)} and \emph{(ii)} of the previous
  theorem, and that
  \begin{itemize}
  \item[\emph{(iii')}] there exists a constant $L_G>0$ such that $h
    \mapsto G_n(\cdot,h) \in \lip\bigl(H,L_2(Z)\cap L_p(Z)\bigr)$ with
    $\|h \mapsto G_n(\cdot,h)\|_{\lip} + \|h \mapsto
    G(\cdot,h)\|_{\lip} \leq L_G$ for all $n \in \enne$ and
    \[
    \bnorm{G_n(\cdot,h)- G(\cdot,h)}_{L_2(Z)\cap L_p(Z)}
    \xrightarrow{n \to \infty} 0 \qquad \forall h \in H;
    \]
  \item[\emph{(iv')}] $u_{0n} \in \L_p$ for all $n\in\enne$ and $u_{0n}
    \to u_0$ in $\L_p$ as $n \to \infty$. 
  \end{itemize}
  Let $u$ and $u_n$ be the mild solutions to \eqref{eq:mumip} and
  \eqref{eq:trollp}, respectively. Then $u_n \to u$ in $\H_p$ as $n
  \to \infty$, that is
  \[
  \lim_{n \to \infty} \E\sup_{t\leq T}\,
    \bigl\| u_n(t) - u(t) \bigr\|^p = 0.
  \]
\end{thm}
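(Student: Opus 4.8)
The plan is to run the argument of Theorem~\ref{thm:nyo2}, with the maximal inequality for square integrable martingales replaced by the Bichteler--Jacod inequality for stochastic integrals against $\m$: for $p\ge 2$ and predictable $\Phi$,
\[
\E\sup_{t\le T}\Bigl\|\int_0^t\!\!\int_Z\Phi(z,s)\,\m(dz,ds)\Bigr\|^p
\lesssim_p \E\Bigl(\int_0^T\!\!\int_Z\|\Phi\|^2\,m(dz)\,ds\Bigr)^{p/2}+\E\int_0^T\!\!\int_Z\|\Phi\|^p\,m(dz)\,ds,
\]
whose right-hand side is bounded, up to a constant depending on $p$ and $T$, by $\E\int_0^T\|\Phi(\cdot,s)\|^p_{L_2(Z)\cap L_p(Z)}\,ds$; this is precisely why that space is the natural one in hypothesis (iii'). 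Writing the mild solutions in integral form,
\[
u(t)=S(t)u_0-\int_0^t S(t-s)f(u(s))\,ds+\int_0^t\!\!\int_Z S(t-s)G(z,u(s-))\,\m(dz,ds)
\]
and analogously for $u_n$ with data $(S_n,f_n,G_n,u_{0n})$, I would introduce the auxiliary process obtained by convolving the \emph{limiting} data against the \emph{approximating} semigroup,
\[
w_n(t):=S_n(t)u_0-\int_0^t S_n(t-s)f(u(s))\,ds+\int_0^t\!\!\int_Z S_n(t-s)G(z,u(s-))\,\m(dz,ds),
\]
and reduce the theorem to the two claims (A) $w_n\to u$ in $\H_p$ and (B) $u_n-w_n\to 0$ in $\H_p$.

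Claim (A) is where hypothesis (i) enters and is the core of the argument. The term $S_n(t)u_0$ converges to $S(t)u_0$ in $\H_p$ because strong resolvent convergence, together with the uniform bound $\|S_n(t)\|_{\LL(H)}\le e^{\eta t}$ (valid since $\eta_n\le\eta$), gives $S_n(t)x\to S(t)x$ uniformly on $[0,T]$ for each $x$ by Trotter--Kato, so dominated convergence on $\Omega$ applies with dominating function $(2e^{\eta T})^p\|u_0\|^p$; the deterministic convolution converges likewise, using the uniform linear growth of $f$ coming from $\|f_n\|_{\lip}+\|f\|_{\lip}\le L_f$. For the stochastic convolution term one needs the continuity in $\H_p$ of the map
\[
A\longmapsto\Bigl(t\mapsto\int_0^t\!\!\int_Z S(t-s)\Phi(z,s)\,\m(dz,ds)\Bigr)
\]
evaluated at the fixed integrand $\Phi(z,s)=G(z,u(s-))$, which is admissible since $u\in\H_p$ and $h\mapsto G(\cdot,h)\in\lip(H,L_2(Z)\cap L_p(Z))$. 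As the factorization method is unavailable for jump noise, I would establish this in two stages: first when $A_n$ is the Yosida approximation $A_\lambda=A(I+\lambda A)^{-1}$, and then for a general sequence converging in the strong resolvent sense, interposing the Yosida approximations of $A_n$ and of $A$ and using the resolvent identity for a $3\varepsilon$-type estimate that is uniform in $n$. For the Yosida stage the idea, inspired by Br\'ezis' proof of the nonlinear Trotter--Kato theorem, is to apply the bounded operator $(I+\lambda A)^{-1}$ to the two convolutions: it commutes with $S$, converges strongly to the identity as $\lambda\to 0$, and turns the mild solutions into strong (semimartingale) ones, to which It\^o's formula for $\|\cdot\|^2$ applies; quasi-monotonicity $\ip{Ax}{x}\ge-\eta\|x\|^2$ provides the leading term with the right sign, Gronwall then closes the estimate with a constant depending only on $\eta$ and $T$, and one finally removes the resolvent by letting $\lambda\to 0$. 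The resulting $\L_2$-type bound is upgraded to the $\H_p$-estimate via the Bichteler--Jacod inequality and the uniform growth bound on $\Phi$.

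For claim (B) one subtracts the integral equations for $u_n$ and $w_n$:
\begin{multline*}
u_n(t)-w_n(t)=S_n(t)(u_{0n}-u_0)-\int_0^t S_n(t-s)\bigl(f_n(u_n(s))-f(u(s))\bigr)\,ds\\
+\int_0^t\!\!\int_Z S_n(t-s)\bigl(G_n(z,u_n(s-))-G(z,u(s-))\bigr)\,\m(dz,ds).
\end{multline*}
The first term is bounded by $e^{\eta T}\|u_{0n}-u_0\|_{\L_p}\to 0$. In the other two one splits $f_n(u_n)-f(u)=\bigl(f_n(u_n)-f_n(u)\bigr)+\bigl(f_n(u)-f(u)\bigr)$ and $G_n(\cdot,u_n)-G(\cdot,u)=\bigl(G_n(\cdot,u_n)-G_n(\cdot,u)\bigr)+\bigl(G_n(\cdot,u)-G(\cdot,u)\bigr)$: by (ii) and (iii') the first pieces are dominated in the relevant norms by $L_f\|u_n-u\|$ and $L_G\|u_n-u\|$, while the second pieces tend to $0$ in $\E\int_0^T\|\cdot\|^p\,ds$ by the pointwise convergences in (ii), (iii') and dominated convergence, a dominating function being supplied by the uniform linear growth of $f_n,G_n$ and $u\in\H_p$. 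Taking $\E\sup_{s\le t}$, estimating the stochastic term by Bichteler--Jacod and the deterministic one by H\"older, and using $\|u_n-u\|\le\|u_n-w_n\|+\|w_n-u\|$ with $\|w_n-u\|_{\H_p}\to 0$ from (A), one arrives at
\[
\E\sup_{s\le t}\|u_n(s)-w_n(s)\|^p\lesssim\varepsilon_n+\int_0^t\E\sup_{r\le s}\|u_n(r)-w_n(r)\|^p\,ds
\]
with $\varepsilon_n\to 0$; Gronwall's lemma gives $\|u_n-w_n\|_{\H_p}\to 0$, and the triangle inequality concludes.

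I expect the main obstacle to be the linear stochastic convolution continuity of claim (A): making the energy estimate work after the resolvent regularization and then ordering the limits $\lambda\to 0$ and $n\to\infty$ correctly while keeping every constant uniform in $n$ (which is exactly the role of $\eta_n\le\eta$). The rest is a standard fixed-point/Gronwall argument, and the step from $p=2$ to general $p$ costs only the use of the Bichteler--Jacod inequality in place of It\^o's isometry.
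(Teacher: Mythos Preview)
Your sketch is essentially correct and follows the paper's strategy: reduce everything to the continuity of the linear stochastic convolution with respect to $A$ in the strong resolvent sense (the paper's Theorem~\ref{thm:trippona}), prove that continuity via a two-layer Yosida regularization, and then close with Lipschitz splittings, dominated convergence, and Gronwall. Your decomposition via the auxiliary process $w_n$ is organizationally different from the paper's direct term-by-term lemmas (Lemmata~\ref{lm:uno}, \ref{lm:due}, \ref{lm:treppe}), but the two are clearly equivalent.

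One imprecision in your description of the Yosida stage is worth flagging. Applying $(I+\lambda A)^{-1}$ to the mild solution regularizes the \emph{data} but leaves the unbounded $A$ in the equation; by itself this does not give you a semimartingale to which It\^o's formula applies. The paper instead uses \emph{two} independent parameters: it replaces $A$ by its Yosida approximation $A_\lambda$ (making the equation genuinely strong because $A_\lambda$ is bounded), and \emph{separately} regularizes the data by $(I+\varepsilon A)^{-1}$ so that $\|A_\lambda y_\lambda^\varepsilon\|$ can be controlled uniformly in $\lambda$. The key observation is then that $y_\lambda^\varepsilon-y_\mu^\varepsilon$ satisfies a \emph{deterministic} equation (the noise terms are identical and cancel), so no It\^o formula is needed at all---just the scalar-product identity and monotonicity of $A$, exactly as in Br\'ezis' argument. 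The resulting Cauchy estimate in $\lambda$ is in $\H_2$, and one raises it to the $p/2$-th power (with H\"older) before taking expectations; the Bichteler--Jacod inequality is used only to bound $\E\|A_\lambda y_\lambda^\varepsilon\|^p$ via the $\mathsf{G}_p$-norm of the regularized integrand. For the general resolvent-convergence step, the paper makes the crucial choice $\varepsilon=\lambda^{1/4}$, which is what allows the $\varepsilon$- and $\lambda$-errors to be controlled simultaneously and uniformly in $n$; you should make sure your ``$3\varepsilon$-type estimate'' can accommodate this coupling.
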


\section{Preliminaries}     \label{sec:prel}
\subsection{Linear maximal monotone operators}
We are going to recall some definitions and (known) facts about linear
maximal (quasi-)monotone operators on Hilbert spaces, referring
e.g. to \cite{Bre-AF,Pazy} for details.

A linear operator $A:D(A) \subset H \to H$ is called maximal monotone
if $\ip{Ax}{x} \geq 0$ for all $x \in D(A)$ and $R(I+\lambda A)=H$ for
all $\lambda>0$. An operator $A$ is called maximal $\eta$-monotone if
$A+\eta I$ is maximal monotone.  Let $A$ be maximal $\eta$-monotone on
$H$ and, for $0<\lambda<1/\eta$, let $J_\lambda:=(I+\lambda A)^{-1}$
and $A_\lambda:=\lambda^{-1}(I-J_\lambda)$ (the latter operator is the
so-called Yosida regularization, or approximation, of $A$).  Then
\begin{itemize}
\item[(i)] $J_\lambda \in \LL(H)$ for all $0<\lambda<1/\eta$ with
  $\|J_\lambda x\| \leq (1-\lambda\eta)^{-1}\|x\|$ for all $x\in H$;
\item[(ii)] $A_\lambda \in \LL(H)$ for all $0<\lambda<1/\eta$ with
$\|A_\lambda x\| \leq (1-\lambda\eta)^{-1}\|Ax\|$ for all $x \in D(A)$;
\item[(iii)] $A_\lambda x = AJ_\lambda x$ for all $x \in H$;
\item[(iv)] $J_\lambda x \to x$ as $\lambda \to 0$ for all $x \in
  H$. In particular, by (iii), $A_\lambda x \to Ax$ as $\lambda \to 0$
  for all $x \in D(A)$.
\end{itemize}
It should be noted that the above properties of the resolvent
$J_\lambda$ and of the Yosida approximation $A_\lambda$ continue to
hold, \emph{mutatis mutandis}, for the much more general class of
nonlinear (quasi-)$m$-accretive operators on Banach spaces (see
e.g. \cite{Barbu}).

\medskip

We shall need for the proofs of the main results the following
inhomogeneous version of the Trotter-Kato's theorem.
\begin{thm}     \label{thm:titikaka}
  Let $A$ and $A_n$, $n\in\enne$, be maximal monotone operators on
  $H$; $f$ and $f_n$, $n\in\enne$, be elements of $L_1([0,T],H)$; $u_0$
  and $u_{0n}$, $n\in\enne$, be elements of $H$. Let $u$ and $u_n$
  denote the mild solutions to the equations
  \begin{gather*}
    u' + Au = f, \qquad u(0) =u_0,\\
    u_n' + A_nu_n = f_n, \qquad u_n(0) =u_{0n},
  \end{gather*}
  respectively. Suppose that, as $n \to \infty$, $A_n \to A$ in the
  strong resolvent sense, $u_{0n} \to u$ in $H$ and $f_n \to f$ in
  $L^1([0,T],H)$. Then
  \[
  \lim_{n\to\infty} \sup_{t\leq T} \|u_n(t)-u(t)\| = 0.
  \]
\end{thm}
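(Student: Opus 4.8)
The plan is to reduce the statement to the semigroup form of the Trotter--Kato approximation theorem through the variation-of-constants representation of mild solutions. Denote by $S$ and $S_n$ the contraction semigroups generated by $-A$ and $-A_n$, respectively (these are genuine contractions because $A$ and $A_n$ are maximal monotone). By the very definition of mild solution,
\[
u(t) = S(t)u_0 + \int_0^t S(t-s)f(s)\,ds, \qquad
u_n(t) = S_n(t)u_{0n} + \int_0^t S_n(t-s)f_n(s)\,ds .
\]
The key input I would invoke is that strong resolvent convergence of generators of equibounded $C_0$-semigroups entails strong convergence of the semigroups themselves, \emph{uniformly on bounded time intervals}: since $A_n\to A$ in the strong resolvent sense and the semigroups are contractive, for every $h\in H$ one has $\sup_{t\le T}\norm{S_n(t)h-S(t)h}\to 0$ as $n\to\infty$ (see, e.g., \cite{Pazy}). (Note that resolvent convergence for a single $\lambda$ propagates to all admissible $\lambda$ by the resolvent identity together with uniform boundedness, so the precise range of $\lambda$ in the hypothesis is immaterial.)

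Next I would write $u_n(t)-u(t)=\mathrm{I}_n(t)+\mathrm{II}_n(t)$, with $\mathrm{I}_n(t):=S_n(t)u_{0n}-S(t)u_0$ and $\mathrm{II}_n(t):=\int_0^t\bigl(S_n(t-s)f_n(s)-S(t-s)f(s)\bigr)\,ds$, and estimate the two pieces separately. For the first, bound
\[
\norm{\mathrm{I}_n(t)}\le\norm{S_n(t)(u_{0n}-u_0)}+\norm{(S_n(t)-S(t))u_0}
\le\norm{u_{0n}-u_0}+\sup_{r\le T}\norm{(S_n(r)-S(r))u_0},
\]
where the right-hand side is independent of $t$ and tends to $0$ by hypothesis together with the uniform Trotter--Kato convergence recalled above; hence $\sup_{t\le T}\norm{\mathrm{I}_n(t)}\to 0$.

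For the convolution term I would split
\[
\norm{\mathrm{II}_n(t)}\le\int_0^t\norm{S_n(t-s)\bigl(f_n(s)-f(s)\bigr)}\,ds
+\int_0^t\norm{\bigl(S_n(t-s)-S(t-s)\bigr)f(s)}\,ds .
\]
Contractivity of $S_n$ bounds the first integral by $\norm{f_n-f}_{L_1([0,T],H)}$, which vanishes by assumption. For the second, bound it by $\int_0^T g_n(s)\,ds$ with $g_n(s):=\sup_{r\le T}\norm{(S_n(r)-S(r))f(s)}$; the function $g_n$ is measurable (the supremum may be restricted to rationals by strong continuity), satisfies $g_n(s)\le 2\norm{f(s)}\in L_1([0,T])$, and $g_n(s)\to 0$ for every $s$ by the first paragraph, so $\int_0^T g_n\,ds\to 0$ by dominated convergence. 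Thus $\sup_{t\le T}\norm{\mathrm{II}_n(t)}\to 0$, and combining the two estimates gives $\sup_{t\le T}\norm{u_n(t)-u(t)}\to 0$.

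I do not expect a genuine obstacle once the semigroup form of Trotter--Kato is available; the only points that require care are the \emph{uniformity in $t$} of the semigroup convergence (indispensable so that the estimates survive the supremum over $[0,T]$) and the measurability and domination needed to apply dominated convergence to the second half of the convolution term.
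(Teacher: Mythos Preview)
Your argument is correct and complete: the variation-of-constants representation, the semigroup form of Trotter--Kato giving $\sup_{t\le T}\|S_n(t)h-S(t)h\|\to 0$ for each $h$, and the dominated-convergence treatment of the convolution term all go through exactly as you describe.

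As for comparison with the paper: the paper does not give its own proof of this statement. It simply refers the reader to Barbu's book (where a more general nonlinear result for $m$-accretive operators on Banach spaces is proved) and to Kunze--van Neerven for a ``linear'' proof via the factorization method. Your approach is more elementary than either reference: you bypass both the nonlinear machinery and the factorization technique by reducing directly to the classical homogeneous Trotter--Kato theorem and handling the inhomogeneity with a straightforward dominated-convergence argument. This is perfectly adequate here because the operators are linear and the semigroups are contractive; the cited references are aimed at broader settings (nonlinear operators, or analytic semigroups that need not be quasi-contractive) where your shortcut would not be available.
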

\begin{proof}
  See e.g. \cite[p.~241]{Barbu} for a proof (of a much more general
  result) that uses the theory of $m$-accretive operators on Banach
  spaces, or \cite{KvN1} for a ``linear'' proof using the
  factorization method.
\end{proof}

\subsection{Stochastic integration with jumps and maximal
  inequalities}
We shall use the theory of stochastic integration with respect to
Hilbert space-valued martingales, about which we refer to \cite{Met}
for a detailed treatment. Here we shall essentially limit ourselves to
fixing notation.

For a $K$-valued square integrable martingale $M$, let $Q_M$ be
the unique $\LL_1(K)$-valued predictable process $Q_M$ such that
\[
\langle\!\langle M,M \rangle\!\rangle(t) = \int_0^t Q_M(s)\,
d\langle M,M \rangle(s).
\]
We shall denote by $\Lambda^2_M(K,H)$ the closure of the space of
$\LL(K,H)$-valued simple process in the space of processes $\Phi$ whose
values are linear (possibly unbounded) operator from $K$ to $H$ such
that $\Phi(t)Q_M^{1/2}(t) \in \LL_2(K,H)$ for all $(\omega,t) \in \Omega
\times [0,T]$, $\Phi Q_M^{1/2}h$ is predictable for all $h \in H$, and
\[
\E \int_0^T \bigl\|\Phi(t)Q_M^{1/2}(t) \bigr\|^2_{\LL_2(K,H)}\,
d\langle M,M \rangle(t) < \infty.
\]
For any $\Phi \in \Lambda^2_M(K,H)$, the stochastic integral $\Phi
\cdot M$ is an $H$-valued square integrable martingale with
$\bigl\langle \Phi \cdot M,\Phi \cdot M \bigr\rangle = \bigl\|\Phi
Q_M^{1/2} \bigr\|^2_{\LL_2} \cdot \langle M,M \rangle$. Note that, if
$M$ satisfies the (Q) hypothesis, then
\[
\E\int_0^T \bigl\|\Phi(t)Q_M^{1/2}(t)\bigr\|^2_{\LL_2(K,H)}\,
d\langle M,M \rangle(t) \leq 
\E\int_0^T \bigl\|\Phi(t)Q^{1/2}\bigr\|^2_{\LL_2(K,H)}\,
dt.
\]

\medskip

In the following proposition we collect some (known) maximal
inequalities for stochastic convolutions driven by martingales, of
which we sketch a proof for the reader's convenience. More details can
be found e.g. in \cite{HauSei2}.
\begin{prop}
  \label{prop:yayo}
  Let $B$ be a process taking values in the space of linear (not
  necessarily bounded) operators from $K$ to
  $H$, and set
  \[
  Y(t) := \int_0^t S(t-s)B(s)\,dM(s), \qquad 0 \leq t \leq T.
  \]
  The following holds true:
  \begin{itemize}
  \item[\emph{(i)}] if $B \in \Lambda^2_M(K,H)$, then $Y \in \H_2$ and
    \begin{equation}
      \label{eq:maxi2}
      \|Y\|^2_{\H_2} \equiv \E\sup_{t \leq T} \|Y(t)\|^2 \lesssim_\eta 
      \E\int_0^T \|B(t)Q_M^{1/2}(t)\|^2_{\LL_2(K,H)} \,d\langle M,M
      \rangle(t);
    \end{equation}
  \item[\emph{(ii)}] if $B:[0,T] \times \Omega \to \LL(K,H)$ is
    predictable and there exists $p \in [2,\infty[$ such that the
    right-hand side of \eqref{eq:maxip} below is finite, then $Y \in
    \H_p$ and
    \begin{equation}
      \label{eq:maxip}
      \|Y\|^p_{\H_p} \equiv 
      \E\sup_{t \leq T} \|Y(t)\|^p \lesssim_{p,\eta} \E\Bigl(\int_0^T
      \|B(t)\|^2_{\LL(K,H)}\,d[M,M](t)\Bigr)^{p/2}.
    \end{equation}
  \end{itemize}
\end{prop}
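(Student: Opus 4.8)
The plan is to establish both maximal inequalities \eqref{eq:maxi2} and \eqref{eq:maxip} by first removing the semigroup from the convolution via the standard dilation trick, and then invoking the (non-convolutional) maximal inequalities for stochastic integrals against square integrable martingales. Concretely, since $-A$ is the generator of a $C^0$-semigroup of quasi-contractions, $e^{-\eta t}S(t)$ is a contraction semigroup, so by the Sz\H{o}kefalvi-Nagy dilation theorem there is a larger Hilbert space $\bar H \supseteq H$, a unitary $C^0$-group $(U(t))_{t\in\erre}$ on $\bar H$, and the orthogonal projection $\pi:\bar H \to H$ such that $e^{-\eta t}S(t)h = \pi U(t)h$ for all $h \in H$, $t \geq 0$. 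Then, writing $\iota$ for the inclusion $H \hookrightarrow \bar H$,
\[
Y(t) = e^{\eta t}\,\pi U(t) \int_0^t U(-s)\,e^{\eta s}\,\iota B(s)\,dM(s)
      = e^{\eta t}\,\pi U(t)\, \tilde Y(t),
\]
where $\tilde Y(t) := \int_0^t U(-s)e^{\eta s}\iota B(s)\,dM(s)$ is an honest $\bar H$-valued stochastic integral (no semigroup inside), i.e.\ a square integrable martingale. Since $\|\pi U(t)\|_{\LL(\bar H)} \leq 1$ and $e^{\eta t} \leq e^{\eta T}$, we get the pointwise bound $\|Y(t)\| \leq e^{\eta T}\|\tilde Y(t)\|_{\bar H}$, hence $\E\sup_{t\leq T}\|Y(t)\|^p \lesssim_{\eta,T} \E\sup_{t\leq T}\|\tilde Y(t)\|_{\bar H}^p$.

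For part (i) I would then apply Doob's $L^2$-inequality to the $\bar H$-valued martingale $\tilde Y$ together with the It\^o isometry recalled above: $\E\|\tilde Y(T)\|_{\bar H}^2 = \E\int_0^T \|U(-s)e^{\eta s}\iota B(s) Q_M^{1/2}(s)\|_{\LL_2(K,\bar H)}^2\,d\langle M,M\rangle(s)$. Because $U(-s)$ is unitary and $\iota$ is an isometry, the Hilbert--Schmidt norm is unchanged up to the scalar $e^{\eta s} \leq e^{\eta T}$, so the right-hand side is $\lesssim_\eta \E\int_0^T \|B(s)Q_M^{1/2}(s)\|_{\LL_2(K,H)}^2\,d\langle M,M\rangle(s)$, which gives \eqref{eq:maxi2}; finiteness of this quantity is exactly the hypothesis $B \in \Lambda^2_M(K,H)$, and it also yields $Y \in \H_2$. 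For part (ii), with $B$ bounded-operator-valued and predictable, the same reduction applies, and now I would invoke the Burkholder--Davis--Gundy inequality for $\bar H$-valued square integrable martingales (see \cite{Met}), which bounds $\E\sup_{t\leq T}\|\tilde Y(t)\|_{\bar H}^p$ by $\E\bigl([\tilde Y,\tilde Y](T)\bigr)^{p/2}$ up to a constant $N(p)$; one then estimates the quadratic variation $[\tilde Y,\tilde Y](T) \leq \int_0^T \|U(-s)e^{\eta s}\iota B(s)\|_{\LL(K,\bar H)}^2\,d[M,M](s) \leq e^{2\eta T}\int_0^T \|B(s)\|_{\LL(K,H)}^2\,d[M,M](s)$, again using unitarity and isometry, which is \eqref{eq:maxip}.

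The only genuinely delicate point is the interchange of the semigroup (resp.\ the group $U(\cdot)$) with the stochastic integral that underlies the identity $Y(t) = e^{\eta t}\pi U(t)\tilde Y(t)$: strictly speaking one must justify writing $S(t-s) = e^{\eta(t-s)}\pi U(t)U(-s)\iota$ inside the integral and pulling the bounded operator $e^{\eta t}\pi U(t)$ out of $\int_0^t$, which is a stochastic Fubini-type argument. I would handle this in the usual way, first for simple integrands $B$ (where everything is a finite sum and the manipulation is trivially legitimate since $\pi U(t)$ and scalars commute with the sum and are $\mathcal F_s$-independent as deterministic operators), and then passing to the limit using the isometry in $\Lambda^2_M$ for part (i) and a density/localization argument for part (ii); the continuity of $Y$ in $\H_p$ follows since $\tilde Y$ is c\`adl\`ag and $t \mapsto e^{\eta t}\pi U(t)$ is strongly continuous. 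All the remaining steps are routine estimates with the constants depending only on $p$, $\eta$, and $T$ (the $T$-dependence can be absorbed into the $\lesssim_\eta$ notation since $T$ is fixed throughout).
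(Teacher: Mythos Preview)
Your argument is essentially identical to the paper's: reduce to a contraction semigroup via $e^{-\eta t}S(t)$, dilate \`a la Sz.-Nagy to a unitary group on $\bar H$, pull $\pi U(t)$ out of the integral, and then apply Doob's inequality plus the It\^o isometry for (i) and Burkholder--Davis--Gundy together with $[B\cdot M,B\cdot M]\leq\|B\|_\LL^2\cdot[M,M]$ for (ii). One small slip: since $S(t-s)=e^{\eta(t-s)}\pi U(t)U(-s)\iota$, the scalar inside $\tilde Y$ should be $e^{-\eta s}$ rather than $e^{\eta s}$; this is harmless for the estimate (it is bounded by $1$ instead of $e^{\eta T}$), but worth correcting.
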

\begin{proof}
  Let $S^\eta(t):=e^{-\eta t}S(t)$, $t \geq 0$. Then $S^\eta$ is a
  strongly continuous contraction semigroup, and, by Sz.-Nagy's
  dilation theorem, there exist a (separable) Hilbert space $\bar{H}
  \supset H$ and a unitary strongly continuous group
  $(U(t))_{t\in\erre}$ on $\bar{H}$ such that
  \[
  S^\eta(t) = \pi \circ U(t) \circ i \qquad \forall t \geq 0,
  \]
  where $i:H \to \bar{H}$ is an isometric embedding and $\pi: \bar{H}
  \to H$ is an orthogonal projection. We thus have
  \begin{align*}
    &\E\Big\| \int_0^t S(t-s)B(s)\,dM(s) \Big\|^2
    \leq e^{2\eta T}\E\Big\| \int_0^t S^\eta(t-s)B(s)\,dM(s) \Big\|^2\\
    &\qquad \lesssim_\eta \E\Big\| U(t)\int_0^t U(-s)B(s)\,dM(s)
    \Big\|^2_{\bar{H}}
    \leq \E\Big\| \int_0^t U(-s)B(s)\,dM(s) \Big\|^2_{\bar{H}}\\
    &\qquad \leq \E\int_0^t \|B(s)\|^2_{Q_M}\,d\langle M,M \rangle(s).
  \end{align*}
  Then \eqref{eq:maxi2} follows by Doob's inequality for real-valued
  submartingales. The proof of \eqref{eq:maxip} is completely
  analogous: it follows from Burkholder's inequality, rather than from
  the isometric property of the stochastic integral with respect to
  $M$, taking into account the easy estimate $[B\cdot M, B\cdot M]
  \leq \|B\|^2_\LL \cdot [M,M]$.
\end{proof}
\begin{rmk}
  Unfortunately it is not possible to replace the operator norm of $B$
  in \eqref{eq:maxip} with the Hilbert-Schmidt norm of $BQ_M^{1/2}$,
  cf.~e.g.~\cite{HauSei2} for a brief discussion of this issue.
\end{rmk}

\medskip

We shall also need a maximal inequality for stochastic convolution
with respect to compensated Poisson random measures obtained in
\cite{cm:JFA10}. Here $\mathcal{P}$ stands for the predictable
$\sigma$-field.
\begin{prop}[\cite{cm:JFA10}, Prop.~3.3]
  Assume that $G:\Omega \times [0,T] \times Z \to H$ is $\mathcal{P}
  \otimes \mathcal{Z}$-measurable and there exists $p \in [2,\infty[$
  such that the right-hand side in \eqref{eq:**} below is
  finite. Then, setting
  \[
  Y(t) := \int_0^t\!\int_Z S(t-s) G(s,z)\,\m(ds,dz), \qquad 0 \leq t \leq T,
  \]
  one has $Y \in \H_p$ and
  \begin{equation}       \label{eq:**}
    \begin{aligned}
    \|Y\|^p_{\H_p} &\equiv \E\sup_{t \leq T} \|Y(t)\|^p\\
    &\lesssim_{p,\eta} \E \int_0^T \biggl[
    \int_Z \|G(t,z)\|^p\,m(dz) + \Bigl( \int_Z \|G(t,z)\|^2\,m(dz)
    \Bigr)^{p/2} \biggr]\,dt.
    \end{aligned}
  \end{equation}
\end{prop}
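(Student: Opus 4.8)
The plan is to mirror the proof of Proposition \ref{prop:yayo}, replacing the stochastic integral against $M$ by the one against $\m$ and the scalar Doob/Burkholder estimate by the corresponding maximal inequality for integrals against compensated Poisson random measures. First I would set $S^\eta(t) := e^{-\eta t}S(t)$, which is a strongly continuous contraction semigroup, and invoke Sz.-Nagy's dilation theorem to obtain a separable Hilbert space $\bar{H} \supset H$, an isometric embedding $i:H \to \bar{H}$, an orthogonal projection $\pi:\bar{H} \to H$, and a strongly continuous unitary group $(U(t))_{t \in \erre}$ on $\bar{H}$ with $S^\eta(t) = \pi \circ U(t) \circ i$ for all $t \geq 0$, exactly as in the proof of Proposition \ref{prop:yayo}.

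The key step is to remove the convolution structure. Writing $S(t-s) = e^{\eta(t-s)}\,\pi\,U(t)\,U(-s)\,i$ and using $e^{\eta(t-s)} = e^{\eta t}e^{-\eta s}$, I would express
\[
Y(t) = e^{\eta t}\,\pi\,U(t)\,N(t), \qquad
N(t) := \int_0^t\!\int_Z e^{-\eta s}\,U(-s)\,i\,G(s,z)\,\m(ds,dz),
\]
which is legitimate because $\pi\,U(t)$ is, for each fixed $t$, a bounded operator that commutes with the stochastic integral. The integrand $\Psi(s,z) := e^{-\eta s}U(-s)\,i\,G(s,z)$ no longer depends on the outer time $t$, so $N$ is a genuine $\bar{H}$-valued martingale, its predictability being inherited from $G$ since $s \mapsto e^{-\eta s}U(-s)$ is deterministic and strongly continuous; finiteness of the right-hand side of \eqref{eq:**} guarantees that $\Psi$ is an admissible integrand. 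Since $\pi$ has operator norm at most one and $U(t)$ is unitary, $\|Y(t)\| \leq e^{\eta T}\|N(t)\|_{\bar{H}}$ for all $t \leq T$, whence
\[
\|Y\|^p_{\H_p} = \E\sup_{t \leq T}\|Y(t)\|^p \leq e^{p\eta T}\,\E\sup_{t \leq T}\|N(t)\|^p_{\bar{H}}.
\]

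It then remains to bound the right-hand side by a plain (non-convolutional) maximal inequality for stochastic integrals against $\m$. Applying the Bichteler--Jacod--Kunita inequality in the Hilbert space $\bar{H}$, valid for $p \in [2,\infty[$, gives
\[
\E\sup_{t \leq T}\|N(t)\|^p_{\bar{H}} \lesssim_p
\E\Bigl(\int_0^T\!\int_Z \|\Psi(s,z)\|^2_{\bar{H}}\,m(dz)\,ds\Bigr)^{p/2}
+ \E\int_0^T\!\int_Z \|\Psi(s,z)\|^p_{\bar{H}}\,m(dz)\,ds.
\]
Finally, since $i$ is isometric, $U(-s)$ is unitary, and $e^{-\eta s} \leq 1$ for $s \geq 0$, one has $\|\Psi(s,z)\|_{\bar{H}} = e^{-\eta s}\|G(s,z)\| \leq \|G(s,z)\|$; substituting this into the last display and combining with the previous estimate yields \eqref{eq:**}.

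The main obstacle is conceptual rather than computational: it is the dilation step that linearises the convolution, converting $\sup_t \|Y(t)\|$ into the supremum of the norm of an honest martingale, to which the Poisson maximal inequality applies directly. Once this reduction is in place, the argument is the Poisson counterpart of Proposition \ref{prop:yayo}, and the only genuinely new ingredient is the two-term Bichteler--Jacod--Kunita inequality for Hilbert-space-valued jump integrals, whose second ($L^p$) term is responsible for the corresponding term on the right-hand side of \eqref{eq:**}.
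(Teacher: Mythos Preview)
The paper does not give its own proof of this proposition; it is quoted verbatim from \cite{cm:JFA10}. Your argument---Sz.-Nagy dilation to reduce the stochastic convolution to an honest $\bar H$-valued martingale, followed by a Bichteler--Jacod type maximal inequality for Poisson integrals---is precisely the approach the paper uses for Proposition~\ref{prop:yayo} and is the one implicitly endorsed by the remark following the proposition (``the same proofs go through, using a different \ldots dilation theorem''). So your strategy is the intended one.

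There is one small gap. The Bichteler--Jacod--Kunita inequality you invoke produces the term
\[
\E\Bigl(\int_0^T\!\int_Z \|\Psi(s,z)\|^2_{\bar H}\,m(dz)\,ds\Bigr)^{p/2},
\]
whereas the right-hand side of \eqref{eq:**} has the $p/2$-th power \emph{inside} the time integral:
\[
\E\int_0^T \Bigl(\int_Z \|G(s,z)\|^2\,m(dz)\Bigr)^{p/2}\,ds.
\]
These are not the same expression, and ``substituting \ldots yields \eqref{eq:**}'' skips a step. For $p\geq 2$ the map $x\mapsto x^{p/2}$ is convex, so Jensen's (or H\"older's) inequality with respect to $ds/T$ gives
\[
\Bigl(\int_0^T h(s)\,ds\Bigr)^{p/2} \leq T^{p/2-1}\int_0^T h(s)^{p/2}\,ds,
\qquad h(s):=\int_Z \|G(s,z)\|^2\,m(dz),
\]
which closes the gap at the price of a constant depending on $T$ (harmless here, since $T$ is fixed throughout). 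You should add this line.
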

Note that inequalities \eqref{eq:maxi2}, \eqref{eq:maxip} and
\eqref{eq:**} can equivalently be written as
\begin{align}
  \tag{\ref{eq:maxi2}'}
  \|Y\|_{\H_2} &\lesssim \norm{\bigl(
    \|BQ_M^{1/2}\|_{\LL_2}^2\cdot\langle M,M\rangle \bigr)^{1/2}}_{\L_2},\\
  \tag{\ref{eq:maxip}'}
  \|Y\|_{\H_p} &\lesssim \norm{\bigl(
    \|B\|^2_{\LL} \cdot [M,M] \bigr)^{1/2}}_{\L_p},\\
  \tag{\ref{eq:**}'}
  \|Y\|_{\H_p} &\lesssim \bigl\| G
  \bigr\|_{L_p(\Omega\times[0,T],L_2(Z)\cap L_p(Z))}.
\end{align}

\begin{rmk}
  (i) A corresponding inequality for stochastic integrals and
  convolutions with respect to L\'evy processes was established in
  \cite{cm:MF10}. An analogous estimate holds if the Hilbert space $H$
  is replaced by an $L_q$ space (see \cite{cm:EJP10} for a basic result, and
  \cite{Dirks:th} for far-reaching generalizations).

  (ii) The maximal estimates of the previous two propositions continue
  to hold in the case that $A$ has a bounded $H^\infty$-calculus of
  angle less than $\pi/2$. In fact, exactly the same proofs go
  through, using a different (and more sophisticated) dilation
  theorem, cf. e.g. \cite{VerWei} for the case of stochastic
  convolutions in UMD Banach spaces of type 2 with respect to a Wiener
  process. In the context of Hilbert spaces, however, the classes of
  quasi-monotone operators and of operators with bounded
  $H^\infty$-calculus mentioned above essentially coincide (see
  \cite{LeM03} for a precise result).
\end{rmk}

\section{Convergence of stochastic convolutions I}     
\label{sec:conv1}
Throughout this and the following section we assume that $\eta=0$, in
particular that $A$ is maximal monotone, rather than just maximal
quasi-monotone. That this comes at no loss of generality is showed in
Remark \ref{rmk:noloss} below.

Let us consider the linear stochastic evolution equation on $H$ 
\begin{equation}     \label{eq:lse}
dy(t) + Ay(t)\,dt = B(t)\,dM(t), \qquad y(0)=y_0,
\end{equation}
whose mild solution is \emph{defined}, at least formally, as
\[
y(t) = S(t)y_0 + \int_0^t S(t-s)B(s)\,dM(s).
\]
It is immediate that $y_0 \in \L_2$, $B \in \Lambda^2_M$ imply $y \in
\H_2$, and that, for any $p \in [2,\infty[$, $y_0 \in \L_p$, $B$
satisfying the hypotheses of Proposition \ref{prop:yayo}(ii) imply $y
\in \H_p$.

In the first of the following two subsections we establish convergence
to $y$, in $\H_2$ and in $\H_p$, of the solutions to the equations
obtained replacing $A$ in \eqref{eq:lse} with its Yosida
regularization. In the second subsection we consider, more generally,
the equations obtained replacing $A$ by $A_n$, with $A_n$ converging
to $A$ in the strong resolvent sense.

\subsection{Yosida approximation of $A$}
Let $A_\lambda$, $\lambda>0$, be the Yosida approximation of $A$, and
consider the regularized equation
\begin{equation}
  \label{eq:yol}
  dy_\lambda(t) + A_\lambda y_\lambda\,dt = B(t)\,dM(t), \qquad
  y_\lambda(0)=y_0,
\end{equation}
whose mild solution can be written, formally for the time being, as
\[
y_\lambda(t) = e^{-tA_\lambda}y_0 + \int_0^t e^{-(t-s)A_\lambda}B(s)\,dM(s).
\]
In analogy to the case of equation \eqref{eq:lse}, $y_0 \in \L_2$ and
$B \in \Lambda^2_M$ imply that $y_\lambda \in \H_2$, while $y_0 \in
\L_p$ and $B$ predictable with $\bigl(\|B\|^2_{\LL}\cdot
[M,M]\bigr)^{1/2} \in \L_p$ imply that $y_\lambda \in \H_p$.

\medskip

We start with an elementary convergence result which will be needed in
the proof of Theorem \ref{thm:yo2sc} below.
\begin{lemma}
  Assume that $y_0 \in \L_2$ and $B$ satisfies assumption (i) or (ii)
  of Proposition \ref{prop:yayo}. Let $y$ and $y_\lambda \in \H_2$ be
  the solutions to \eqref{eq:lse} and \eqref{eq:yol},
  respectively. Then one has $y_\lambda(t) \to y(t)$ in $\L_2$ for all
  $t \in [0,T]$ as $\lambda \to 0$, i.e.
  \[
  \lim_{\lambda \to 0} \E\bigl\| y_\lambda(t)-y(t) \bigr\|^2 = 0
  \qquad \forall t \in [0,T].
  \]
\end{lemma}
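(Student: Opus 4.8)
The plan is to split the difference $y_\lambda(t) - y(t)$ into a deterministic (semigroup) part and a stochastic convolution part, and to treat each separately. Write
\[
y_\lambda(t) - y(t) = \bigl( e^{-tA_\lambda}y_0 - S(t)y_0 \bigr)
+ \int_0^t \bigl( e^{-(t-s)A_\lambda} - S(t-s) \bigr) B(s)\,dM(s)
=: \mathrm{I}_\lambda(t) + \mathrm{II}_\lambda(t).
\]
For $\mathrm{I}_\lambda(t)$, I would invoke the (linear, homogeneous) Trotter--Kato theorem: since $A_\lambda \to A$ in the strong resolvent sense as $\lambda \to 0$ (this is a standard fact about the Yosida approximation, and follows from property (iv) in \S\ref{sec:prel} together with the resolvent identity), one has $e^{-tA_\lambda}h \to S(t)h$ in $H$, uniformly in $t \in [0,T]$, for every fixed $h \in H$. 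Applying this pathwise with $h = y_0(\omega)$ gives $\|\mathrm{I}_\lambda(t)\| \to 0$ a.s.; since $\|e^{-tA_\lambda}y_0\| \le \|y_0\|$ (contraction semigroups, using $\eta = 0$ as assumed in this section), dominated convergence upgrades this to $\E\|\mathrm{I}_\lambda(t)\|^2 \to 0$.

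For the stochastic term $\mathrm{II}_\lambda(t)$, the idea is to apply the isometry-type maximal estimate \eqref{eq:maxi2} (or rather its pointwise-in-$t$ analogue, which follows from the same Sz.-Nagy dilation argument without Doob's inequality, or simply from \eqref{eq:maxi2} applied on $[0,t]$) to the integrand $\bigl( e^{-(t-s)A_\lambda} - S(t-s) \bigr) B(s)$. This bounds $\E\|\mathrm{II}_\lambda(t)\|^2$ by
\[
\E \int_0^t \bigl\| \bigl( e^{-(t-s)A_\lambda} - S(t-s) \bigr) B(s) Q_M^{1/2}(s) \bigr\|_{\LL_2(K,H)}^2 \, d\langle M,M\rangle(s).
\]
Now the operators $e^{-(t-s)A_\lambda} - S(t-s)$ are uniformly bounded (by $2$) in $\LL(H)$, and for each fixed $s$ they converge strongly to $0$ as $\lambda \to 0$; strong convergence of uniformly bounded operators composed with a fixed Hilbert--Schmidt operator $B(s)Q_M^{1/2}(s)$ converges in the $\LL_2$ norm (approximate by finite-rank, or use that $\|C_\lambda K\|_{\LL_2} \to 0$ whenever $C_\lambda \to 0$ strongly and $\|C_\lambda\|$ is bounded and $K \in \LL_2$). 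The integrand therefore tends to $0$ pointwise in $(\omega, s)$ and is dominated by $4\|B(s)Q_M^{1/2}(s)\|_{\LL_2}^2$, which is integrable against $\P \otimes d\langle M,M\rangle$ by the assumption $B \in \Lambda_M^2$ (case (i); case (ii) is handled by the analogous estimate with $[M,M]$ and operator norms). Dominated convergence then gives $\E\|\mathrm{II}_\lambda(t)\|^2 \to 0$.

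The main obstacle — the only point requiring genuine care — is the claim that strong convergence $C_\lambda \to 0$ in $\LL(H)$, with $\sup_\lambda \|C_\lambda\| < \infty$, forces $\|C_\lambda K\|_{\LL_2} \to 0$ for fixed $K \in \LL_2(K,H)$. This is true: writing $K = \sum_j \sigma_j \langle \cdot, e_j\rangle f_j$ in singular-value form, one has $\|C_\lambda K\|_{\LL_2}^2 = \sum_j \sigma_j^2 \|C_\lambda f_j\|^2$, each summand tends to $0$, and the whole sum is dominated by $(\sup_\lambda\|C_\lambda\|)^2 \sum_j \sigma_j^2 = (\sup_\lambda\|C_\lambda\|)^2 \|K\|_{\LL_2}^2 < \infty$, so dominated convergence over the counting measure applies. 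Everything else is a routine assembly of Trotter--Kato, the maximal inequalities of Proposition \ref{prop:yayo}, and two applications of dominated convergence.
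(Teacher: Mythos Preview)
Your argument for case (i) is correct and essentially identical to the paper's: both split into the semigroup part and the stochastic convolution, invoke Trotter--Kato for pointwise convergence, and pass to the limit via dominated convergence. The only cosmetic difference is that you package the Hilbert--Schmidt convergence as the lemma ``$C_\lambda \to 0$ strongly, bounded, $K \in \LL_2$ $\Rightarrow$ $\|C_\lambda K\|_{\LL_2} \to 0$'' proved via SVD, whereas the paper writes the $\LL_2$-norm directly as $\sum_j \|C_\lambda K e_j\|^2$ over a basis of $K$ and applies dominated convergence to the sum; these are the same computation.

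There is, however, a genuine gap in your handling of case (ii). The parenthetical ``case (ii) is handled by the analogous estimate with $[M,M]$ and operator norms'' does not go through: using \eqref{eq:maxip} with $p=2$ would leave you with the integrand $\bigl\|\bigl(e^{-(t-s)A_\lambda}-S(t-s)\bigr)B(s)\bigr\|_{\LL}^2$, and strong convergence of $e^{-(t-s)A_\lambda}-S(t-s)$ to $0$ does \emph{not} force this operator norm to vanish when $B(s)$ is merely bounded (not compact). Your SVD trick rescues the $\LL_2$-norm precisely because a Hilbert--Schmidt operator is compact; for the operator norm there is no such mechanism. The paper avoids this by observing that hypothesis (ii) actually implies (i): from $\|BQ_M^{1/2}\|_{\LL_2}^2 \le \|B\|_\LL^2\,\|Q_M\|_{\LL_1} \le \|B\|_\LL^2$ (using $\operatorname{Tr} Q_M \le 1$) and $\E\bigl(\|B\|_\LL^2 \cdot \langle M,M\rangle\bigr) = \E\bigl(\|B\|_\LL^2 \cdot [M,M]\bigr) < \infty$, one gets $B \in \Lambda^2_M$, and then the case-(i) argument applies verbatim. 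You should make this reduction explicit rather than appeal to a nonexistent operator-norm analogue.
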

\begin{proof}
  One has
  \begin{equation}
    \label{eq:pfff}
    \begin{aligned}
      \E\bigl\| y_\lambda(t)-y(t) \bigr\|^2 &\lesssim
      \E\bigl\| e^{-tA_\lambda}y_0 - S(t)y_0 \bigr\|^2\\
      &\quad + \E\Bigl\| \int_0^t
      \bigl(e^{-(t-s)A_\lambda}B(s)-S(t-s)B(s)\bigr)\,dM(s) \Bigr\|^2.
    \end{aligned}
  \end{equation}
  Let us assume first that $B \in \Lambda^2_M(K,H)$. Recall that, by
  the Trotter-Kato's theorem (see e.g.~\cite[p.~88]{Pazy}), one has
  $e^{-tA_\lambda}h \to S(t)h$ as $\lambda \to 0$ for all $t \in
  [0,T]$ and all $h \in H$.  By the isometric property of the
  stochastic integral with respect to $M$, the second term on the
  right-hand side of the above inequality is equal to
  \begin{multline*}
  \E\int_0^t \bigl\| \bigl(
  e^{-(t-s)A_\lambda}B(s)-S(t-s)B(s)\big)Q_M(s)^{1/2} \bigr\|^2_{\LL_2}
  \,d\langle M,M \rangle(s)\\
  = \E\int_0^t \sum_{j=1}^\infty \bigl\|
  e^{-(t-s)A_\lambda}B(s)Q_M^{1/2}(s)e_j - S(t-s)B(s)Q_M^{1/2}e_j \bigr\|^2
  \,d\langle M,M \rangle(s),
  \end{multline*}
  where $(e_j)_{j\in\enne}$ is an orthonormal basis of $K$. Then one has
  \[
  \lim_{\lambda \to 0} \big\|
  e^{-(t-s)A_\lambda}B(s)Q_M^{1/2}(s)e_j - S(t-s)B(s)Q_M^{1/2}(s)e_j 
  \big\|^2 = 0
  \]
  for all $s \leq t$ and for all $j\in\enne$. Since the operator norms
  of $S(t)$ and $e^{-tA_\lambda}$ are not larger than one for all $t
  \in [0,T]$, one also has
  \[
  \big\| e^{-(t-s)A_\lambda}B(s)Q_M^{1/2}(s)e_j -
  S(t-s)B(s)Q_M^{1/2}(s)e_j \big\|^2 \lesssim \big\|
  B(s)Q_M^{1/2}(s)e_j \big\|^2
  \]
  for all $0 \leq s \leq t$, and
  \begin{multline*}
    \E\int_0^t \sum_{j=1}^\infty \big\| B(s)Q_M^{1/2}(s)e_j \big\|^2\,
    d\langle M,M \rangle(s)\\
    = \E\int_0^t \big\| B(s)Q_M^{1/2}(s) \big\|^2_{\LL_2(K,H)}\,
    d\langle M,M \rangle(s)<\infty.
  \end{multline*}
  The dominated convergence theorem then implies that the second term
  on the right-hand side of \eqref{eq:pfff} tends to zero as $\lambda
  \to 0$. A completely analogous (but simpler) argument shows that the
  same is true for the first term on the r.h.s. of \eqref{eq:pfff}.

  If $B$ satisfies the assumptions of Proposition \ref{prop:yayo}(ii)
  for some $p \geq 2$, it certainly does for $p=2$, in which case we
  have
\begin{align*}
\E \bigl( \|BQ_M^{1/2}\|^2_{\LL_2} \cdot \langle M,M\rangle \bigr)
&\leq \E\bigl( \bigl( \|B\|^2_\LL \|Q_M^{1/2}\|^2_{\LL_2}\bigr) \cdot 
     \langle M,M\rangle \bigr)\\
&\leq \E\bigl( \|B\|^2_\LL \cdot \langle M,M\rangle \bigr)
= \E\bigl( \|B\|^2_\LL \cdot [M,M] \bigr) < \infty,
\end{align*}
where we have used the ideal property of the space of Hilbert-Schmidt
operators, the identity $\|Q_M^{1/2}\|^2_{\LL_2}=\|Q_M\|_{\LL_1}$, and
the fact that $\operatorname{Tr} Q_M \leq 1$. We have thus shown that
$B \in \Lambda^2_M$, a condition which has already been proved to
imply the claim.
\end{proof}

\begin{thm}    \label{thm:yo2sc}
  Assume that $y_0 \in \L_2$ and $B \in \Lambda^2_M(K,H)$.  Let $y$
  and $y_\lambda \in \H_2$ be the solutions to \eqref{eq:lse} and
  \eqref{eq:yol}, respectively. Then one has $y_\lambda \to y$ in
  $\H_2$ as $\lambda \to 0$, i.e.
  \begin{equation}     \label{eq:conv}
    \lim_{\lambda \to 0} \E\sup_{t\leq T}
    \big\| y_\lambda(t)-y(t) \big\|^2 = 0.
  \end{equation}
\end{thm}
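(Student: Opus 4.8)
The plan is to split $y_\lambda-y$ into the deterministic part $t\mapsto(e^{-tA_\lambda}-S(t))y_0$ and the difference of stochastic convolutions $Y_\lambda-Y$, where $Y_\lambda(t):=\int_0^t e^{-(t-s)A_\lambda}B(s)\,dM(s)$ and $Y(t):=\int_0^t S(t-s)B(s)\,dM(s)$. The deterministic part is harmless: since the Trotter--Kato convergence $e^{-tA_\lambda}h\to S(t)h$ is uniform for $t\in[0,T]$, for a.e.\ $\omega$ one has $\sup_{t\le T}\|(e^{-tA_\lambda}-S(t))y_0(\omega)\|^2\to0$, and this quantity being dominated by $4\|y_0(\omega)\|^2\in L^1(\P)$, dominated convergence yields $\E\sup_{t\le T}\|(e^{-tA_\lambda}-S(t))y_0\|^2\to0$. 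It therefore suffices to prove $\|Y_\lambda-Y\|_{\H_2}\to0$.

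First I would reduce the integrand to a convenient dense class. Since $\eta=0$, both $S$ and $e^{-\cdot A_\lambda}$ are contraction semigroups, so the maximal inequality \eqref{eq:maxi2} applies to convolutions against each of them with a constant independent of $\lambda$; hence, if $Y'_\lambda$ and $Y'$ denote the convolutions built from another integrand $B'\in\Lambda^2_M$, then $\|Y_\lambda-Y'_\lambda\|_{\H_2}+\|Y-Y'\|_{\H_2}\lesssim\|B-B'\|_{\Lambda^2_M}$. A triangle-inequality argument then reduces the claim to $B$ ranging over a dense subset of $\Lambda^2_M$. Simple processes are dense by definition of $\Lambda^2_M$, and composing each coefficient $B_i$ of a simple process with the orthogonal projection $P_k$ onto the span of the first $k$ vectors of an orthonormal basis of $H$ lying in $D(A)$ produces processes $P_kB$ converging to $B$ in $\Lambda^2_M$ (dominated convergence, using $\operatorname{Tr}Q_M\le1$ and that $(I-P_k)T\to0$ in $\LL_2(K,H)$ for Hilbert--Schmidt $T$). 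Hence we may assume $B=\sum_i\mathbf 1_{(t_i,t_{i+1}]}B_i$ with each $B_i$ of finite rank and $\operatorname{range}(B_i)\subset D(A)$; in particular $C_1:=\sup_{\omega,s}\|AB(s)\|_{\LL(K,H)}<\infty$.

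The core step is a pathwise monotonicity estimate \`a la Br\'ezis applied to the difference of two regularized convolutions $Y_\lambda$ and $Y_\mu$. Because $A_\lambda$ is bounded, $Y_\lambda$ is actually a strong solution, $dY_\lambda=-A_\lambda Y_\lambda\,dt+B\,dM$ (by stochastic Fubini), so the martingale parts of $Y_\lambda$ and $Y_\mu$ coincide and $w:=Y_\lambda-Y_\mu$ has absolutely continuous trajectories with $\dot w=-(A_\lambda Y_\lambda-A_\mu Y_\mu)$. Using $w=\lambda A_\lambda Y_\lambda-\mu A_\mu Y_\mu+(J_\lambda Y_\lambda-J_\mu Y_\mu)$, the identity $A_\lambda Y_\lambda=A(J_\lambda Y_\lambda)$, the monotonicity of $A$ to discard the nonnegative term $\langle J_\lambda Y_\lambda-J_\mu Y_\mu,A(J_\lambda Y_\lambda-J_\mu Y_\mu)\rangle$, and Young's inequality, one gets the pathwise bound
\[
\sup_{t\le T}\|Y_\lambda(t)-Y_\mu(t)\|^2\le|\lambda-\mu|\,T\,\Bigl(\sup_{t\le T}\|A_\lambda Y_\lambda(t)\|^2+\sup_{t\le T}\|A_\mu Y_\mu(t)\|^2\Bigr).
\]
Since $A_\lambda Y_\lambda(t)=\int_0^t e^{-(t-s)A_\lambda}(A_\lambda B(s))\,dM(s)$ and $\|A_\lambda B(s)\|_{\LL}\le\|AB(s)\|_{\LL}\le C_1$ (Yosida property (ii) with $\eta=0$), the maximal inequality \eqref{eq:maxi2} and $\operatorname{Tr}Q_M\le1$ give $\E\sup_{t\le T}\|A_\lambda Y_\lambda(t)\|^2\lesssim C_1^2\,\E[M,M](T)=:C<\infty$ uniformly in $\lambda$. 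Hence $\E\sup_{t\le T}\|Y_\lambda-Y_\mu\|^2\lesssim CT\,|\lambda-\mu|\to0$, so $(Y_\lambda)$ is Cauchy in $\H_2$ and converges to some $\bar Y\in\H_2$.

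Finally, convergence in $\H_2$ gives $Y_\lambda(t)\to\bar Y(t)$ in $\L_2$ for every $t$, while the preceding lemma gives $Y_\lambda(t)\to Y(t)$ in $\L_2$ for every $t$; thus $\bar Y(t)=Y(t)$ $\P$-a.s.\ for each $t$, and since both are c\`adl\`ag they are indistinguishable, i.e.\ $Y_\lambda\to Y$ in $\H_2$. Combining with the density reduction and the deterministic term yields $y_\lambda\to y$ in $\H_2$. The main obstacle is exactly the one stressed in the introduction: $Y$ is not a semimartingale, so $d\|Y_\lambda-Y\|^2$ is unavailable; it is bypassed by differentiating $\|Y_\lambda-Y_\mu\|^2$ instead, which is legitimate precisely because the stochastic parts cancel — the price being the reduction to $D(A)$-valued integrands, needed to keep $\E\sup_t\|A_\lambda Y_\lambda(t)\|^2$ bounded uniformly in $\lambda$.
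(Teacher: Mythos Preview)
Your proof is correct and shares the same heart as the paper's argument: since $Y$ is not a semimartingale, you compare two Yosida-regularized convolutions $Y_\lambda$ and $Y_\mu$, whose stochastic parts cancel, and apply the Br\'ezis monotonicity identity $\ip{A_\lambda x-A_\mu y}{x-y}\geq\ip{A_\lambda x-A_\mu y}{\lambda A_\lambda x-\mu A_\mu y}$ to obtain a Cauchy estimate; the limit is then identified via the preceding lemma. The paper does exactly this.

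The genuine difference is in how you arrange for $AB$ to make sense. The paper regularizes the data by the resolvent, replacing $y_0$ and $B$ by $y_0^\varepsilon=(I+\varepsilon A)^{-1}y_0$ and $B^\varepsilon=(I+\varepsilon A)^{-1}B$, and runs a three-term splitting $\|y-y_\lambda\|\le\|y-y^\varepsilon\|+\|y^\varepsilon-y^\varepsilon_\lambda\|+\|y^\varepsilon_\lambda-y_\lambda\|$; the outer terms are small uniformly in $\lambda$ by the maximal inequality and $J_\varepsilon\to I$, and the middle term is the Cauchy step, controlled via $\|A_\lambda y^\varepsilon_\lambda\|\le\|Ay_0^\varepsilon\|+\|AB^\varepsilon\|\cdots$ with $A(I+\varepsilon A)^{-1}=A_\varepsilon$ bounded. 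You instead separate the $y_0$-part (handled by pointwise-in-$\omega$ Trotter--Kato uniformity on $[0,T]$ plus dominated convergence) from the stochastic convolution, and for the latter reduce by density to simple $B$ with finite-dimensional range in $D(A)$, using that the constant in \eqref{eq:maxi2} is uniform in $\lambda$. Your variant is perhaps slightly more elementary and yields the sharper factor $|\lambda-\mu|$ rather than $(\lambda+\mu)$. The paper's resolvent regularization, on the other hand, produces an explicit two-parameter inequality (their Corollary~4.5) relating $\|y-y_\lambda\|_{\H_2}$ to $\varepsilon$ and $\lambda$, and this quantitative form is precisely what is reused in the subsequent proofs of Theorems~4.6 and~4.7 on convergence under strong resolvent approximation of $A$; your density argument does not immediately give that.
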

\begin{proof}
  Let us introduce two auxiliary regularized equations as follows:
  \begin{alignat}{2}
    \label{eq:reg1}
    d\ep{y}(t) + A\ep{y}(t)\,dt &=
    B^\varepsilon(t)\,dM,&
    \qquad \ep{y}(0) &= y_0^\varepsilon,\\
    \label{eq:reg2}
    dy_\lambda^\varepsilon(t) + A_\lambda y_\lambda^\varepsilon(t)\,dt
    &= B^\varepsilon(t)\,dM,& \qquad y_\lambda^\varepsilon(0) &=
    y_0^\varepsilon,
  \end{alignat}
  with $\ep{y}_0:=(I+\varepsilon A)^{-1}y_0$, $\ep{B} :=
  (I+\varepsilon A)^{-1}B$, for $\varepsilon>0$.
  The triangle inequality yields
  \begin{equation} 
    \label{eq:tre} 
    \| y-y_\lambda \|_{\H_2} \leq \|
    y-\ep{y} \|_{\H_2} + \| \ep{y} -
    \ep{y}_\lambda \|_{\H_2} + \| \ep{y}_\lambda -
    y_\lambda \|_{\H_2}.
  \end{equation}
  By Proposition \ref{prop:yayo}(i) one gets
  \begin{align*}
    \E\sup_{t \leq T} \big\| y(t)-\ep{y}(t) \big\|^2 &\lesssim
    \E\|y_0-\ep{y}_0\|^2 +
    \E\int_0^T \big\|(B(s)-\ep{B}(s))Q_M^{1/2}(s)\big\|^2_{\LL_2}
      \,d\langle M,M\rangle(s),\\
    \E\sup_{t \leq T} \big\| y_\lambda(t)-\ep{y}_\lambda(t) \big\|^2
    &\lesssim \E\|y_0-\ep{y}_0\|^2 + \E\int_0^T
    \big\|(B(s)-\ep{B}(s))Q_M^{1/2}(s)\big\|^2_{\LL_2}\,d\langle M,M\rangle(s).
  \end{align*}
  Since $(I+\lambda A)^{-1}$ is contracting, the dominated convergence
  theorem implies that the right-hand sides of the above inequalities
  converge to zero as $\varepsilon \to 0$. Let us fix $\delta>0$. Then
  there exists $\varepsilon>0$ such that
  \[
  \| y-\ep{y} \|_{\H_2} + \| \ep{y}_\lambda -
  y_\lambda\|_{\H_2} < \frac12 \delta
  \]
  for all $\lambda>0$. We shall keep $\varepsilon$ fixed from now on.
  In order to conclude the proof we have to show that $\| \ep{y} -
  \ep{y}_\lambda \|_{\H_2} < \delta/2$ for $\lambda$ small enough.
  To this purpose note that, for any $\lambda>0$, $\ep{y}_\lambda$ is
  a strong solution (not just a mild solution) to \eqref{eq:reg2}
  because $A_\lambda$ is a bounded operator. Therefore, for any
  $\lambda$, $\mu>0$, we infer that $\ep{y}_\lambda-\ep{y}_\mu$ is a
  strong solution to the deterministic evolution equation
  \begin{equation*}
  (\ep{y}_\lambda-\ep{y}_\mu)' + A_\lambda \ep{y}_\lambda - A_\mu
  \ep{y}_\mu = 0, \qquad \ep{y}_\lambda(0) - \ep{y}_\mu(0)=0.
  \end{equation*}
  Taking the scalar product of both sides with
  $\ep{y}_\lambda-\ep{y}_\mu$, one has
  \[
  \frac12 \frac{d}{dt} \|\ep{y}_\lambda-\ep{y}_\mu\|^2 +\ip{A_\lambda
    \ep{y}_\lambda - A_\mu \ep{y}_\mu}{\ep{y}_\lambda-\ep{y}_\mu} = 0.
  \]
  Recalling the identity $\lambda A_\lambda = I-J_\lambda$, one has
  \begin{align*}
    \ep{y}_\lambda - \ep{y}_\mu &= (J_\lambda \ep{y}_\lambda - J_\mu
    \ep{y}_\mu) + (\ep{y}_\lambda - J_\lambda \ep{y}_\lambda)
    - (\ep{y}_\mu - J_\mu \ep{y}_\mu) \\
    &= (J_\lambda \ep{y}_\lambda - J_\mu \ep{y}_\mu) + \lambda
    A_\lambda \ep{y}_\lambda -\mu A_\mu \ep{y}_\mu.
  \end{align*}
  This yields, thanks to the identity $A_\lambda = AJ_\lambda$,
  \begin{align*}
    \ip{A_\lambda \ep{y}_\lambda - A_\mu
      \ep{y}_\mu}{\ep{y}_\lambda-\ep{y}_\mu} &= \ip{AJ_\lambda
      \ep{y}_\lambda - AJ_\mu \ep{y}_\mu}{J_\lambda \ep{y}_\lambda
      - J_\mu \ep{y}_\mu}\\
    &\quad + \ip{A_\lambda \ep{y}_\lambda - A_\mu \ep{y}_\mu}
    {\lambda A_\lambda \ep{y}_\lambda -\mu A_\mu \ep{y}_\mu}\\
    &\geq \ip{A_\lambda \ep{y}_\lambda - A_\mu \ep{y}_\mu} {\lambda
      A_\lambda \ep{y}_\lambda -\mu A_\mu \ep{y}_\mu},
  \end{align*}
  thus also
  \begin{equation}
  \label{eq:lm2}
  \|\ep{y}_\lambda - \ep{y}_\mu\|^2(t) \lesssim (\lambda+\mu) \int_0^t
  \big( \|A_\lambda \ep{y}_\lambda(s)\|^2 + \|A_\mu \ep{y}_\mu(s)\|^2
  \big)\,ds,
  \end{equation}
  and
  \[
  \E \sup_{t\leq T} \|\ep{y}_\lambda - \ep{y}_\mu\|^2(t) \lesssim
  (\lambda + \mu) \E \int_0^T \big( \|A_\lambda \ep{y}_\lambda(s)\|^2
  + \|A_\mu \ep{y}_\mu(s)\|^2 \big)\,ds.
  \]
  Since it holds
  \[
  \ep{y}_\lambda(s) = e^{-tA_\lambda}\ep{y}_0 + \int_0^t e^{-(t-s)A_\lambda}
  \ep{B}(s)\,dM(s),
  \]
  recalling that $\|A_\lambda x\| \leq \|Ax\|$ for all $x \in D(A)$,
  one has
  \begin{align*}
    \E \|A_\lambda \ep{y}_\lambda(s)\|^2 &\lesssim \E\big\|
    e^{-sA_\lambda} A_\lambda \ep{y}_0 \big\|^2 + \E\Big\|
    \int_0^s e^{-(s-r)A_\lambda} A_\lambda \ep{B}(r)\,dM(r) \Big\|^2\\
    &\leq \E \|Ay_0^\varepsilon\|^2 + \E\int_0^s
    \bigl\|AB^\varepsilon(r)Q_M^{1/2}(r)\bigr\|_{\LL_2}^2\,d\langle M,M\rangle(r),
  \end{align*}
  which implies, taking into account that $A(I+\varepsilon
  A)^{-1}=A_\varepsilon$ is a bounded operator,
  \begin{align*}
    \E \sup_{t\leq T} \|\ep{y}_\lambda - \ep{y}_\mu\|^2(t) &\lesssim
    T(\lambda + \mu) \Big( \E \|Ay_0^\varepsilon\|^2
    + \E\int_0^T \bigl\|AB^\varepsilon(r)Q_M^{1/2}(r)\bigr\|_{\LL_2}^2
        \,d\langle M,M\rangle(r) \Big)\\
    &\lesssim_\varepsilon T(\lambda + \mu) \Big( \E \|y_0\|^2 +
    \E\int_0^T \bigl\|B(r)Q_M^{1/2}(s)\bigr\|_{\LL_2}^2\,d\langle M,M\rangle(r)
    \Big),
  \end{align*}
  i.e. $\lambda \mapsto \ep{y}_\lambda$ is a Cauchy net in
  $\mathbb{H}_2$. In particular, there exist $\ep{z}$ such that
  $\ep{y}_\lambda \to \ep{z}$ in $\mathbb{H}_2$ as $\lambda \to
  0$. Clearly this implies that $\ep{y}_\lambda(t) \to \ep{z}(t)$ in
  $\mathbb{L}_2$ for all $t\in[0,T]$ as $\lambda \to 0$. Since the
  previous lemma implies that it also holds $\ep{y}_\lambda(t) \to
  \ep{y}(t)$ in $\mathbb{L}_2$ for all $t\in[0,T]$ as $\lambda \to 0$,
  we infer that $\ep{z}(t)=\ep{y}(t)$ for all $t \in [0,T]$. Then one
  has
  \[
  \| \ep{y}_\lambda - \ep{y} \|_{\H_2} = \big( \E \sup_{t\leq T}
  \|\ep{y}_\lambda - \ep{y}\|^2(t) \big)^{1/2} \lesssim_{\varepsilon}
  \sqrt{T\lambda},
  \]
  which is obviously bounded above by $\delta/2$ for $\lambda$ small
  enough.
\end{proof}

An analogous result holds in the case $p>2$, adapting the
assumptions on the coefficient $B$.
\begin{thm}     \label{thm:yopsc}
  Let $p>2$. Assume that $B$ satisfies the hypotheses of Proposition
  \ref{prop:yayo}(ii). Then one has $y_\lambda \to y$ in $\H_p$ as
  $\lambda \to 0$, i.e.
  \[
  \lim_{\lambda \to 0} \E\sup_{t \leq T} \|y_\lambda(t)-y(t)\|^p = 0,
  \]
  where $y_\lambda$ and $y$ denote the mild solutions to \eqref{eq:lse}
  and \eqref{eq:yol}, respectively.
\end{thm}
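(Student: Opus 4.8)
The plan is to mimic exactly the proof of Theorem \ref{thm:yo2sc}, replacing the $\L_2$-based estimates by their $\L_p$-counterparts coming from Proposition \ref{prop:yayo}(ii). First I would prove the analogue of the preceding lemma, namely that $y_\lambda(t) \to y(t)$ in $\L_p$ for each fixed $t \in [0,T]$ as $\lambda \to 0$. This follows by writing $\E\|y_\lambda(t)-y(t)\|^p$ as the sum of a deterministic-semigroup term and a stochastic-convolution term, bounding the latter by \eqref{eq:maxip} applied to $B(\cdot)(e^{-(t-\cdot)A_\lambda}-S(t-\cdot))$, using the pointwise Trotter--Kato convergence $e^{-sA_\lambda}h \to S(s)h$ together with the uniform bound $\|e^{-sA_\lambda}\|_{\LL}, \|S(s)\|_{\LL} \leq 1$, and invoking dominated convergence against the integrable majorant $\|B(t)\|^2_{\LL}\cdot[M,M]$; the first term is handled the same way but more easily.

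Next I would set up the two auxiliary regularized equations \eqref{eq:reg1}--\eqref{eq:reg2} with the same mollified data $\ep{y}_0 = (I+\varepsilon A)^{-1}y_0$ and $\ep{B} = (I+\varepsilon A)^{-1}B$, and split $\|y-y_\lambda\|_{\H_p}$ by the triangle inequality exactly as in \eqref{eq:tre}. Proposition \ref{prop:yayo}(ii) gives $\|y-\ep{y}\|_{\H_p} + \|\ep{y}_\lambda - y_\lambda\|_{\H_p} < \delta/2$ for a suitably small, then fixed, $\varepsilon > 0$, since $(I+\varepsilon A)^{-1}$ is a contraction commuting with the resolvents and dominated convergence kills the relevant integrals as $\varepsilon \to 0$; here one uses that $(\|B - \ep{B}\|_{\LL}^2 \cdot [M,M])^{1/2} \in \L_p$ with the obvious majorant. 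Then it remains to show $\|\ep{y}_\lambda - \ep{y}\|_{\H_p} < \delta/2$ for $\lambda$ small.

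For that last step I would again exploit that, $\varepsilon$ being fixed, $A_\lambda$ is bounded, so $\ep{y}_\lambda$ is a genuine strong solution and, for $\lambda, \mu > 0$, $\ep{y}_\lambda - \ep{y}_\mu$ solves the deterministic equation $(\ep{y}_\lambda - \ep{y}_\mu)' + A_\lambda\ep{y}_\lambda - A_\mu\ep{y}_\mu = 0$ with zero initial datum. Taking the scalar product with $\ep{y}_\lambda - \ep{y}_\mu$ and using $\lambda A_\lambda = I - J_\lambda$, $A_\lambda = AJ_\lambda$, and monotonicity of $A$ exactly as in the proof of Theorem \ref{thm:yo2sc}, one arrives at the pathwise bound \eqref{eq:lm2}, hence $\E\sup_{t\leq T}\|\ep{y}_\lambda - \ep{y}_\mu\|^p(t) \lesssim (\lambda+\mu)^{p/2}\, \E\bigl(\int_0^T (\|A_\lambda\ep{y}_\lambda\|^2 + \|A_\mu\ep{y}_\mu\|^2)\,ds\bigr)^{p/2}$. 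Using $\|A_\lambda x\| \leq \|Ax\|$ and applying Proposition \ref{prop:yayo}(ii) to the convolution $\int_0^s e^{-(s-r)A_\lambda}A_\lambda\ep{B}(r)\,dM(r)$, together with $A(I+\varepsilon A)^{-1} = A_\varepsilon \in \LL(H)$, bounds this by $N(\varepsilon)(\lambda+\mu)^{p/2}$, uniformly in $\lambda,\mu$. Thus $(\ep{y}_\lambda)_\lambda$ is Cauchy in $\H_p$; its limit must coincide with $\ep{y}$ by the $\L_p$-convergence established in the first step, and $\|\ep{y}_\lambda - \ep{y}\|_{\H_p} \lesssim_\varepsilon \lambda^{1/2} < \delta/2$ for $\lambda$ small enough.

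The one point requiring slight care — and the only real departure from the $p=2$ argument — is the passage from the pathwise inequality \eqref{eq:lm2} to an $\H_p$-bound: one raises \eqref{eq:lm2} to the power $p/2$, uses that $t \mapsto \int_0^t(\cdots)\,ds$ is nondecreasing so the $\sup_t$ is attained at $t=T$, takes expectations, and only then applies the maximal inequality \eqref{eq:maxip} to each of the two convolution integrals (after a Minkowski/triangle step to separate the deterministic and stochastic parts of $A_\lambda\ep{y}_\lambda$, each raised to the $p/2$ power). This is where the estimates are least automatic, but no genuinely new idea is needed: Proposition \ref{prop:yayo}(ii) was stated precisely so as to make this go through for all $p \in [2,\infty[$.
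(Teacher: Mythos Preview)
Your overall strategy coincides with the paper's: same auxiliary equations \eqref{eq:reg1}--\eqref{eq:reg2}, same triangle splitting, same monotonicity computation leading to \eqref{eq:lm2}, same Cauchy-net conclusion. The Cauchy-net estimate you sketch is correct; the only cosmetic difference is that after raising \eqref{eq:lm2} to the power $p/2$ the paper applies H\"older's inequality to pass to $\int_0^T \|A_\lambda \ep{y}_\lambda(s)\|^p\,ds$ and then bounds $\E\|A_\lambda\ep{y}_\lambda(s)\|^p$ for each fixed $s$, whereas you keep the $L^2$-in-time integral and control $\E\bigl(\int_0^T\|A_\lambda\ep{y}_\lambda\|^2\,ds\bigr)^{p/2}$ by passing through $\sup_s$. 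Both routes work.

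There is, however, a genuine gap in your first step, the proposed $\L_p$ analogue of the lemma. The bound \eqref{eq:maxip} involves the \emph{operator} norm $\|(\,e^{-(t-s)A_\lambda}-S(t-s)\,)B(s)\|_{\LL(K,H)}$, and Trotter--Kato gives only \emph{strong} convergence $e^{-rA_\lambda}h \to S(r)h$; this does not imply that the operator-norm integrand tends to zero (take $K=H$, $B(s)=I$: norm convergence of the Yosida semigroups fails in general). So dominated convergence cannot be invoked as you describe, and the $\L_p$ lemma is not established.

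Fortunately that lemma is unnecessary, and the paper does not prove it. To identify the $\H_p$-limit of the Cauchy net $(\ep{y}_\lambda)_\lambda$, use instead that $\H_p \hookrightarrow \H_2$ for $p>2$; the net is then also Cauchy in $\H_2$, and its $\H_2$-limit was already identified as $\ep{y}$ in Theorem~\ref{thm:yo2sc} (whose hypotheses are met because, as shown in the proof of the preceding lemma, the assumptions of Proposition~\ref{prop:yayo}(ii) imply $B\in\Lambda^2_M$). With this replacement your argument is complete and matches the paper's.
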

\begin{proof}
  In analogy to the argument used in the proof of the previous
  theorem, one has
  \begin{equation} 
    \label{eq:trep}
    \| y-y_\lambda \|_{\H_p} \leq \|
    y-\ep{y} \|_{\H_p} + \| \ep{y} -
    \ep{y}_\lambda \|_{\H_p} + \| \ep{y}_\lambda -
    y_\lambda \|_{\H_p},
  \end{equation}
  as well as, by Proposition \ref{prop:yayo}(ii),
  \begin{align*}
    \E\sup_{t \leq T} \big\| y(t)-\ep{y}(t) \big\|^p &\lesssim
    \E\|y_0-\ep{y}_0\|^p +
    \E\Bigl(\int_0^T \big\|B(s)-\ep{B}(s)\big\|^2_\LL\,d[M,M](s)\Bigr)^{p/2},\\
    \E\sup_{t \leq T} \big\| y_\lambda(t)-\ep{y}_\lambda(t) \big\|^p
    &\lesssim \E\|y_0-\ep{y}_0\|^p + \E\Bigl(\int_0^T
    \big\|B(s)-\ep{B}(s)\big\|^2_\LL\,d[M,M](s)\Bigr)^{p/2}.
  \end{align*}
  Let $\delta>0$ be arbitrary but fixed. Then there exists
  $\varepsilon>0$ such that
  \[
  \| y-\ep{y} \|_{\H_p} + \| \ep{y}_\lambda -
  y_\lambda\|_{\H_p} < \frac12 \delta
  \]
  for all $\lambda>0$. Keeping $\varepsilon$ fixed from now on, let us
  show that $\|\ep{y}-\ep{y}_\lambda\|_{\H_p}<\delta/2$ for $\lambda$
  sufficiently small. As in the proof of the previous theorem,
  exploiting the monotonicity of $A$ and using properties of the
  Yosida approximation, we arrive at
  \[
  \|\ep{y}_\lambda - \ep{y}_\mu\|^2(t) \lesssim (\lambda+\mu) \int_0^t
  \big( \|A_\lambda \ep{y}_\lambda(s)\|^2 + \|A_\mu \ep{y}_\mu(s)\|^2
  \big)\,ds.
  \]
  Raising both sides to the $p/2$-th power and appealing to H\"older's
  inequality we obtain
  \[
  \|\ep{y}_\lambda(t) - \ep{y}_\mu(t)\|^p \lesssim_T (\lambda+\mu)^{p/2}
  \int_0^t
  \big( \|A_\lambda \ep{y}_\lambda(s)\|^p + \|A_\mu \ep{y}_\mu(s)\|^p
  \big)\,ds,
  \]
  hence also
  \[
  \E \sup_{t\leq T} \|\ep{y}_\lambda(t) - \ep{y}_\mu(t)\|^p \lesssim_T
  (\lambda + \mu)^{p/2} \E \int_0^T \big( \|A_\lambda \ep{y}_\lambda(s)\|^p
  + \|A_\mu \ep{y}_\mu(s)\|^p \big)\,ds.
  \]
  Note that one has
  \begin{align*}
    \E \|A_\lambda \ep{y}_\lambda(s)\|^p &\lesssim \E\big\|
    e^{-sA_\lambda} A_\lambda \ep{y}_0 \big\|^p + \E\Bigl\|
    \int_0^s e^{-(s-r)A_\lambda} A_\lambda \ep{B}(r)\,dM(r) \Bigr\|^p\\
    &\leq \E \|Ay_0^\varepsilon\|^p + \E\Bigl(\int_0^s
    \|AB^\varepsilon(r)\|_\LL^2\,d[M,M](r)\Bigr)^{p/2},
  \end{align*}
  which implies
  \[
  \E \sup_{t\leq T} \|\ep{y}_\lambda(t) - \ep{y}_\mu(t)\|^p
  \lesssim_{T,\varepsilon}(\lambda + \mu)^{p/2} \left( \E \|y_0\|^p +
  \E\Bigl(\int_0^T \|B(r)\|_\LL^2\,d[M,M](r)\Bigr)^{p/2} \right).
  \]
  This shows that $\lambda \mapsto \ep{y}_\lambda$ is a Cauchy net in
  $\mathbb{H}_p$, from which we infer that there exists a constant
  $N$, depending on $\varepsilon$ and $T$, such that
  \[
  \| \ep{y}_\lambda - \ep{y} \|_{\H_p} \leq N \sqrt{\lambda},
  \]
  the right-hand side of which is clearly bounded above by $\delta/2$
  for $\lambda$ small enough.
\end{proof}

The estimates contained in the following corollary are simply
extracted from the proofs of the previous two theorems. Since they will
be used in the next subsection, we state them explicitly for clarity
of exposition.
\begin{coroll}     \label{cor:utile}
  Under the assumptions of Theorem \ref{thm:yo2sc}, the following
  inequality holds, for any $\varepsilon>0$ and $\lambda>0$:
  \begin{align*}
    \norm{y-y_\lambda}^2_{\H_2} &\lesssim \E\|y_0-\ep{y}_0\|^2 
    + \E\int_0^T \|B(s)-\ep{B}(s)\|^2_Q\,d\langle M,M\rangle(s)\\
    &\quad + T\lambda \Big( \E\|A\ep{y}_0\|^2
    + \E\int_0^T \|A\ep{B}(s)\|^2_Q\,d\langle M,M\rangle(s) \Big).
  \end{align*}
  Assume that there exists $p>2$ such that the assumptions of Theorem
  \ref{thm:yopsc} are satisfied. Then one has, for any $\varepsilon>0$
  and $\lambda>0$,
  \begin{align*}
    \norm{y-y_\lambda}^p_{\H_p} &\lesssim \E\|y_0-\ep{y}_0\|^p 
    + \E\Bigl(\int_0^T \|B(s)-\ep{B}(s)\|^2_\LL\,d[M,M](s)\Bigr)^{p/2}\\
    &\quad + (T\lambda)^{p/2} \biggl( \E\|A\ep{y}_0\|^p
    + \E\Bigl(\int_0^T \|A\ep{B}(s)\|^p_\LL\,d[M,M](s)\Bigr)^{p/2} \biggr).
  \end{align*}
\end{coroll}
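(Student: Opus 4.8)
The proof is entirely a matter of bookkeeping: both inequalities are read off from the arguments of Theorems \ref{thm:yo2sc} and \ref{thm:yopsc} by simply refraining from the last step, in which $\varepsilon$ is fixed and $\lambda$ sent to zero. The plan for the $\H_2$ estimate is as follows. I would start from the triangle inequality \eqref{eq:tre}, square it and use $(a+b+c)^2 \lesssim a^2+b^2+c^2$, so that $\norm{y-y_\lambda}^2_{\H_2}$ is controlled by the sum of the squares of the three terms on the right of \eqref{eq:tre}. The first and third are handled exactly as in the proof of Theorem \ref{thm:yo2sc}: since $y-\ep{y}$ (resp.\ $y_\lambda-\ep{y}_\lambda$) is the mild solution of a linear equation of the form \eqref{eq:lse} (resp.\ \eqref{eq:yol}) with initial datum $y_0-\ep{y}_0$ and coefficient $B-\ep{B}$, Proposition \ref{prop:yayo}(i) gives for each the bound $\E\norm{y_0-\ep{y}_0}^2 + \E\int_0^T \norm{(B(s)-\ep{B}(s))Q_M^{1/2}(s)}^2_{\LL_2}\,d\langle M,M\rangle(s)$, i.e.\ the first line of the claim (here $\norm{\cdot}_Q$ abbreviates the Hilbert--Schmidt norm of $\cdot\,Q_M^{1/2}$, as in the proof of Proposition \ref{prop:yayo}).

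For the middle term $\norm{\ep{y}-\ep{y}_\lambda}_{\H_2}$ I would recycle verbatim the computation in the proof of Theorem \ref{thm:yo2sc}: using that $\ep{y}_\lambda$ is a \emph{strong} solution (because $A_\lambda$ is bounded), the monotonicity of $A$, and the identities $\lambda A_\lambda = I-J_\lambda$ and $A_\lambda = AJ_\lambda$, one reaches \eqref{eq:lm2}, hence $\E\sup_{t\le T}\norm{\ep{y}_\lambda-\ep{y}_\mu}^2(t) \lesssim (\lambda+\mu)\,\E\int_0^T\bigl(\norm{A_\lambda\ep{y}_\lambda(s)}^2 + \norm{A_\mu\ep{y}_\mu(s)}^2\bigr)\,ds$; the mild representation of $\ep{y}_\lambda$ together with $\norm{A_\lambda x}\le\norm{Ax}$ then yields $\E\norm{A_\lambda\ep{y}_\lambda(s)}^2 \lesssim \E\norm{A\ep{y}_0}^2 + \E\int_0^s\norm{A\ep{B}(r)Q_M^{1/2}(r)}^2_{\LL_2}\,d\langle M,M\rangle(r)$, uniformly in $\lambda$. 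Letting $\mu\to0$ and invoking the Lemma preceding Theorem \ref{thm:yo2sc} (by which $\ep{y}_\mu(t)\to\ep{y}(t)$ in $\L_2$ for each $t$, so that the Cauchy net $\ep{y}_\mu$ converges in $\H_2$ to $\ep{y}$) gives $\norm{\ep{y}-\ep{y}_\lambda}^2_{\H_2} \lesssim T\lambda\bigl(\E\norm{A\ep{y}_0}^2 + \E\int_0^T\norm{A\ep{B}(s)Q_M^{1/2}(s)}^2_{\LL_2}\,d\langle M,M\rangle(s)\bigr)$, which is the second line. Adding the three contributions yields the first displayed inequality, and since $\varepsilon$ is never chosen, it holds for every $\varepsilon>0$ and $\lambda>0$.

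The estimate for $p>2$ is obtained by the same scheme applied to the proof of Theorem \ref{thm:yopsc}: one replaces Proposition \ref{prop:yayo}(i) by Proposition \ref{prop:yayo}(ii), the It\^o isometry by Burkholder's inequality together with $[B\cdot M,B\cdot M]\le\norm{B}^2_{\LL}\cdot[M,M]$, and applies H\"older's inequality to pass from \eqref{eq:lm2} raised to the power $p/2$ to the time integral of the $p/2$-th powers of $\norm{A_\lambda\ep{y}_\lambda}$ and $\norm{A_\mu\ep{y}_\mu}$; this produces the factor $(T\lambda)^{p/2}$ and replaces every $\L_2$-moment by the corresponding $\L_p$-moment, giving the second displayed inequality. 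I do not expect any genuine obstacle: the only point requiring (minimal) care is the passage $\mu\to0$ in the middle term, but this is already contained in the proofs of the two theorems, where $\lambda\mapsto\ep{y}_\lambda$ is shown to be Cauchy in $\H_2$ (resp.\ $\H_p$) with limit $\ep{y}$ and quantitative rate $\sqrt{T\lambda}$ (resp.\ $N\sqrt{\lambda}$), so that nothing new actually has to be established.
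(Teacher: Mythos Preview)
Your proposal is correct and coincides with the paper's own approach: the corollary is stated there without proof, the authors simply remarking that the estimates ``are simply extracted from the proofs of the previous two theorems,'' which is exactly the bookkeeping you carry out. The only slip is cosmetic---after raising \eqref{eq:lm2} to the power $p/2$ and applying H\"older one obtains the time integral of the $p$-th (not $p/2$-th) powers of $\norm{A_\lambda\ep{y}_\lambda}$ and $\norm{A_\mu\ep{y}_\mu}$---but this does not affect the argument.
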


\subsection{Approximation of $A$ in the strong resolvent sense}
For any $n \in \enne$, let $A_n$ be a linear maximal monotone operator
on $H$, and denote by $S_n$ the strongly continuous semigroup of
contractions generated by $-A_n$. Then the mild solution to the
equation
\begin{equation}     \label{eq:ressa}
dy_n(t) + A_ny_n(t)\,dt = B(t)\,dM(t), \qquad y_n(0)=y_0,
\end{equation}
defined by the variation of constants formula as
\[
y_n(t) = S_n(t)y_0 + \int_0^t S_n(t-s)B(s)\,dM(s),
\]
is well-defined as a process in $\H_2$ or in $\H_p$, under the
measurability and integrability assumptions on $B$ of Proposition
\ref{prop:yayo}(i) or (ii), respectively.

We start with a generalization of Theorem \ref{thm:yo2sc}.
\begin{thm}     \label{thm:nyo2sc}
  Assume that $A_n \to A$ in the strong resolvent sense as $n \to
  \infty$ and that the hypotheses on $B$ of Proposition
  \ref{prop:yayo}(i) are met. Then one has
  $y_n \to y$ in $\H_2$ as $n \to \infty$, i.e.
  \[
  \lim_{n \to \infty} \E\sup_{t\leq T}\|y_n(t)-y(t)\|^2 = 0.
  \]
\end{thm}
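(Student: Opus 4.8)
The plan is to mimic the triangle-inequality decomposition used in the proof of Theorem \ref{thm:yo2sc}, inserting the Yosida approximations of both $A$ and $A_n$ as intermediaries, and then to invoke the deterministic Trotter--Kato theorem (Theorem \ref{thm:titikaka}) to handle the ``genuinely new'' term. Concretely, let $y_\lambda$ and $y_{n,\lambda}$ denote the mild (in fact strong, since $A_\lambda$ and $(A_n)_\lambda$ are bounded) solutions to the equations obtained from \eqref{eq:lse} and \eqref{eq:ressa} by replacing $A$, resp.\ $A_n$, with its Yosida regularization. Then I would write
\[
\norm{y_n - y}_{\H_2} \leq \norm{y_n - y_{n,\lambda}}_{\H_2}
+ \norm{y_{n,\lambda} - y_\lambda}_{\H_2} + \norm{y_\lambda - y}_{\H_2}.
\]
The first and third terms on the right are controlled \emph{uniformly in $n$} by Corollary \ref{cor:utile}: for a fixed auxiliary parameter $\varepsilon>0$, the estimate there bounds $\norm{y_n - y_{n,\lambda}}^2_{\H_2}$ by $\E\|y_0 - \ep{y}_0\|^2 + \E\int_0^T\|B - \ep{B}\|_Q^2\,d\langle M,M\rangle + T\lambda(\cdots)$, where — this is the point — the ``$\varepsilon$-regularizing'' quantities $\ep{y}_0 = (I+\varepsilon A_n)^{-1}y_0$ and the corresponding $\ep{B}$ should instead be built from $A_n$; but since $\|(I+\varepsilon A_n)^{-1}\|\leq 1$ for all $n$ and $\|A_n(I+\varepsilon A_n)^{-1}\|\leq 1/\varepsilon$, the bounds are uniform in $n$, exactly as in the proof of Theorem \ref{thm:yo2sc}. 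So: first fix $\delta>0$; choose $\varepsilon$ small so that the $\varepsilon$-terms contribute less than $\delta/4$ (uniformly in $n$, by dominated convergence, using that $(I+\varepsilon A_n)^{-1}B \to B$ pointwise in the relevant Hilbert--Schmidt norm — here one may have to be slightly careful and instead just use $\|(I+\varepsilon A_n)^{-1}B - B\| \leq 2\|B\|$ and the fact that $(I+\varepsilon A_n)^{-1}h\to h$ as $\varepsilon\to0$ uniformly over compacts, or simply absorb this into the choice of $\lambda$); then choose $\lambda$ small so that the $T\lambda(\cdots)$ terms contribute less than $\delta/4$, again uniformly in $n$ since the constant depends only on $\varepsilon$, $T$, $\E\|y_0\|^2$, and $\E\int\|BQ_M^{1/2}\|^2_{\LL_2}\,d\langle M,M\rangle$.

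With $\varepsilon$ and $\lambda$ now frozen, it remains to show $\norm{y_{n,\lambda} - y_\lambda}_{\H_2} \to 0$ as $n\to\infty$. This is where the deterministic convergence enters. Since $A_\lambda = AJ_\lambda$ and $(A_n)_\lambda$ are \emph{bounded} operators, $y_{n,\lambda} - y_\lambda$ solves a genuine (random, but pathwise) linear ODE in $H$, and in fact one can write, using the variation-of-constants formula and the stochastic Fubini theorem,
\[
y_{n,\lambda}(t) - y_\lambda(t) = \bigl(e^{-t(A_n)_\lambda} - e^{-tA_\lambda}\bigr)y_0
+ \int_0^t \bigl(e^{-(t-s)(A_n)_\lambda} - e^{-(t-s)A_\lambda}\bigr)B(s)\,dM(s).
\]
By the maximal inequality of Proposition \ref{prop:yayo}(i), the $\H_2$-norm of this is bounded by $\E\|(e^{-t(A_n)_\lambda}-e^{-tA_\lambda})y_0\|^2$ plus $\E\int_0^T \|(e^{-(t-s)(A_n)_\lambda}-e^{-(t-s)A_\lambda})B(s)Q_M^{1/2}(s)\|^2_{\LL_2}\,d\langle M,M\rangle(s)$ (with a $\sup$ over $t$ inside — but since the semigroup difference converges uniformly on $[0,T]$ this is harmless). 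Now $A_n\to A$ in the strong resolvent sense classically implies $(A_n)_\lambda \to A_\lambda$ pointwise (indeed $(A_n)_\lambda h = \lambda^{-1}(h - (I+\lambda A_n)^{-1}h) \to \lambda^{-1}(h - (I+\lambda A)^{-1}h) = A_\lambda h$), hence $e^{-t(A_n)_\lambda}h \to e^{-tA_\lambda}h$ for each $h$ and each $t$ — uniformly in $t\in[0,T]$, because all the exponentials are contractions and one can use e.g.\ the integral representation $e^{-t(A_n)_\lambda} - e^{-tA_\lambda} = \int_0^t e^{-(t-r)(A_n)_\lambda}(A_\lambda - (A_n)_\lambda)e^{-rA_\lambda}\,dr$, bounding in operator-times-vector norm. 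Applying this termwise to $B(s)Q_M^{1/2}(s)e_j$, together with the domination $\|(e^{-(t-s)(A_n)_\lambda}-e^{-(t-s)A_\lambda})B(s)Q_M^{1/2}(s)e_j\|^2 \leq 4\|B(s)Q_M^{1/2}(s)e_j\|^2$ and the integrability of $\sum_j\|B(s)Q_M^{1/2}(s)e_j\|^2 = \|B(s)Q_M^{1/2}(s)\|^2_{\LL_2}$ against $d\langle M,M\rangle$, the dominated convergence theorem gives that this term tends to $0$ as $n\to\infty$. The $y_0$-term is handled the same way (and more simply). So for $n$ large, $\norm{y_{n,\lambda}-y_\lambda}_{\H_2} < \delta/2$, and combining the three estimates yields $\norm{y_n - y}_{\H_2} < \delta$ for $n$ large, which proves the claim since $\delta$ was arbitrary.

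The main obstacle, and the thing to be careful about, is the \emph{uniformity in $n$} of the first and third terms: one cannot afford to let the bound on $\norm{y_n - y_{n,\lambda}}_{\H_2}$ depend on $n$ through the semigroup $S_n$ or the resolvents $(I+\varepsilon A_n)^{-1}$. This is precisely why the argument must go through Corollary \ref{cor:utile}, whose constants are manifestly $n$-independent (they depend only on $\varepsilon$, $T$, and the data $y_0$, $B$, $Q_M$, $M$), rather than through any estimate that sees the individual operator $A_n$. Once that point is secured, the order of quantifiers is the usual one: fix $\delta$, then $\varepsilon$, then $\lambda$ (the last two chosen using only $n$-uniform bounds), and only then send $n\to\infty$ with $\varepsilon,\lambda$ frozen, at which stage the bounded-operator ODE comparison together with $(A_n)_\lambda \to A_\lambda$ and dominated convergence finishes the proof.
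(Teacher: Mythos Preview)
Your overall strategy is exactly the paper's: decompose via the Yosida layers $y_\lambda$, $y_{n,\lambda}$, control the outer terms through Corollary~\ref{cor:utile}, and handle the middle term using that both $A_\lambda$ and $(A_n)_\lambda$ are bounded together with strong resolvent convergence. Two places in the execution do not close, though.

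\textbf{Uniformity in $n$ of the outer terms.} Your claim that the $\varepsilon$-terms $\E\|y_0-(I+\varepsilon A_n)^{-1}y_0\|^2$ and the corresponding $B$-term can be made small \emph{uniformly in $n$} by choosing $\varepsilon$ small is not justified: $(I+\varepsilon A_n)^{-1}h\to h$ as $\varepsilon\to 0$ holds for each fixed $n$ but not uniformly in $n$ in general, and ``uniformly over compacts'' in $h$ does not help. The paper resolves this by (a) coupling $\varepsilon=\lambda^{1/4}$, which turns the $T\lambda(\cdots)$ contribution into $T\sqrt{\lambda}$ times $n$-independent data norms, and (b) splitting the remaining $n$-dependent pieces via
\[
\|y_0-J^n_{\lambda^{1/4}}y_0\|\leq\|y_0-J_{\lambda^{1/4}}y_0\|+\|J_{\lambda^{1/4}}y_0-J^n_{\lambda^{1/4}}y_0\|,
\]
so one part is small for small $\lambda$ (independently of $n$) and the other is small for large $n$ with $\lambda$ fixed, by strong resolvent convergence. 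The correct quantifier order is then: fix $\delta$, choose $\lambda$, and finally choose $n$ large enough to kill \emph{both} the middle term and these residual resolvent-difference terms simultaneously. Your hedge ``or simply absorb this into the choice of $\lambda$'' gestures at this but is not an argument.

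\textbf{The middle term.} You invoke Proposition~\ref{prop:yayo}(i) to bound
\[
\E\sup_{t\leq T}\Bigl\|\int_0^t\bigl(e^{-(t-s)(A_n)_\lambda}-e^{-(t-s)A_\lambda}\bigr)B(s)\,dM(s)\Bigr\|^2,
\]
but that maximal inequality is proved via dilation of a \emph{single} contraction semigroup and does not apply to a difference of two; the $t$-dependence of the integrand prevents the $\sup_t$ from passing inside the expectation. The paper sidesteps this: since $A_\lambda$ and $(A_n)_\lambda$ are bounded, $y_\lambda$ and $y_{n,\lambda}$ are \emph{strong} solutions and the stochastic forcing cancels in their difference, leaving a genuine pathwise ODE. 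Monotonicity of $(A_n)_\lambda$ plus Gronwall (equivalently, the variation-of-constants identity $y_{n,\lambda}(t)-y_\lambda(t)=-\int_0^te^{-(t-r)(A_n)_\lambda}\bigl((A_n)_\lambda-A_\lambda\bigr)y_\lambda(r)\,dr$, which is your integral representation applied to $y_\lambda$ rather than termwise to $B(s)Q_M^{1/2}e_j$) gives
\[
\|y_{n,\lambda}-y_\lambda\|^2_{\H_2}\lesssim_T\E\int_0^T\bigl\|(A_\lambda-(A_n)_\lambda)y_\lambda(s)\bigr\|^2\,ds,
\]
which tends to $0$ by dominated convergence since $\|(A_\lambda-(A_n)_\lambda)y_\lambda\|\leq 2\lambda^{-1}\|y_\lambda\|$ and $y_\lambda\in L_2(\Omega\times[0,T])$.
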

\begin{proof}
  Let us denote by $A_{n\lambda}:=A_n(I+\lambda A_n)^{-1}$,
  $\lambda>0$, the Yosida approximation of $A_n$, and consider the
  regularized equations
  \begin{gather}
  \label{approx13}
    dy_\lambda(t)+A_\lambda y_\lambda(t)\,dt= B(t)\,dM(t),
    \qquad y_\lambda(0)=y_0,\\
  \label{approx23}
    dy_{n\lambda}(t)+A_{n\lambda} y_{n\lambda}(t)\,dt= B(t)\,dM(t),
    \qquad y_{n\lambda}(0)=y_0.
  \end{gather} 
  By the triangle inequality one has
  \begin{equation}
    \label{triang2}
    \norm{y-y_n}_{\H_2} \leq \norm{y-y_\lambda}_{\H_2}
    + \norm{y_\lambda-y_{n\lambda}}_{\H_2}
    + \norm{y_{n\lambda}-y_n}_{\H_2}.
  \end{equation}
  By Corollary \ref{cor:utile} we infer that, for any
  $\varepsilon>0$ and $\lambda>0$, the following inequalities hold
  true:
  \begin{gather*}
  \begin{split}
    \norm{y-y_\lambda}^2_{\H_2} &\lesssim \E\|y_0-\ep{y}_0\|^2 
    + \E\int_0^T \bnorm{(B(s)-\ep{B}(s))Q_M^{1/2}(s)}^2_{\LL_2}
        \,d\langle M,M\rangle(s)\\
    &\quad + T\lambda \Big( \E\|A\ep{y}_0\|^2
    + \E\int_0^T \bnorm{A\ep{B}(s)Q_M^{1/2}(s)}^2_{\LL_2}
      \,d\langle M,M\rangle(s) \Big),
  \end{split}
  \\
  \begin{split}
    \norm{y_n-y_{n\lambda}}^2_{\H_2} &\lesssim \E\|y_0-J_\varepsilon^ny_0\|^2 
    + \E\int_0^T \bnorm{(B(s)-J^n_\varepsilon B(s))Q_M^{1/2}(s)}^2_{\LL_2}
        \,d\langle M,M\rangle(s)\\
    &\quad + T\lambda \Big( \E\|A_n J^n_\varepsilon y_0\|^2
    + \E\int_0^T \bnorm{A_n J^n_\varepsilon B(s)Q_M^{1/2}(s)}^2_{\LL_2}
        \,d\langle M,M\rangle(s) \Big),
  \end{split}
  \end{gather*}
  where we have set, for convenience of notation,
  $J^n_\varepsilon:=(I+\varepsilon A_n)^{-1}$. Choosing
  $\varepsilon=\lambda^{1/4}$, and recalling that, for any
  $n\in\enne$, $A_{n\lambda}$ is Lipschitz continuous with
  Lipschitz norm bounded above by $1/\lambda$, yields
  \begin{align*}
  &\lambda \Big( \E\|A_n J^n_\varepsilon y_0\|^2
  + \E\int_0^T \bnorm{A_n J^n_\varepsilon B(s)Q_M^{1/2}(s)}^2_{\LL_2}
      \,d\langle M,M\rangle(s) \Big)\\
  &\qquad = \lambda \Big( \E\|A_n(I+\lambda^{1/4}A_n)^{-1}y_0\|^2
  + \E\int_0^T \bnorm{A_n(I+\lambda^{1/4}A_n)^{-1}B(s)Q_M^{1/2}(s)}^2_{\LL_2}
      \,d\langle M,M\rangle(s)
    \Big)\\
  &\qquad = \lambda \Big( \E\|A_{n\lambda^{1/4}} y_0\|^2
  + \E\int_0^T \bnorm{A_{n\lambda^{1/4}} B(s)Q_M^{1/2}(s)}^2_{\LL_2}
      \,d\langle M,M\rangle(s) \Big)\\
  &\qquad \leq \sqrt{\lambda} \Big( \E\|y_0\|^2
  + \E\int_0^T \bnorm{B(s)Q_M^{1/2}(s)}^2_{\LL_2}\,d\langle M,M\rangle(s)
    \Big).
  \end{align*}
  It goes without saying that the same estimate holds if $A_n$ is
  replaced by $A$. We are thus left with
  \begin{align*}
  &\norm{y-y_\lambda}^2_{\H_2} + \norm{y_n-y_{n\lambda}}^2_{\H_2}\\
  &\hspace{3em} \lesssim
  \E\|y_0-J_{\lambda^{1/4}} y_0\|^2 + \E\|y_0-J_{\lambda^{1/4}}^n y_0\|^2\\
  &\hspace{3em}\quad + \E\int_0^T 
     \bnorm{(B(s)-J_{\lambda^{1/4}}B(s))Q_M^{1/2}(s)}^2_{\LL_2}
     \,d\langle M,M\rangle(s)\\
  &\hspace{3em}\quad + \E\int_0^T
     \bnorm{(B(s)-J^n_{\lambda^{1/4}}B(s))Q_M^{1/2}(s)}^2_{\LL_2}
     \,d\langle M,M\rangle(s)\\
  &\hspace{3em}\quad + T \sqrt{\lambda} \Big( \E\|y_0\|^2
    + \E\int_0^T \bnorm{B(s)Q_M^{1/2}(s)}^2_{\LL_2}
        \,d\langle M,M\rangle(s) \Big)\\
  &\hspace{3em} =: I_1 + I_2 + I_3 + I_4 + I_5.
  \end{align*}

  \medskip

  Let us now consider the second term on the right-hand side of
  \eqref{triang2}: it is immediately seen that
  $y_\lambda-y_{n\lambda}$ is the mild solution to the deterministic
  evolution equation
  \[
  (y_\lambda-y_{n\lambda})' + A_\lambda y_\lambda - A_{n\lambda}y_{n\lambda}=0,
  \qquad y_\lambda(0)-y_{n\lambda}(0)=0.
  \]
  Since $A_\lambda$ and $A_{n\lambda}$ are bounded operators, it
  follows that $y_\lambda$ and $y_{n\lambda}$ are actually strong
  solutions of \eqref{approx13} and \eqref{approx23},
  respectively. Taking scalar product of both sides with
  $y_\lambda-y_{n\lambda}$ and integrating (or, equivalently, applying
  It\^o's formula for the square of the $H$-norm), we obtain
  \[
  \frac12 \norm{y_\lambda(t)-y_{n\lambda}(t)}^2 
  + \int_0^t \ip{A_\lambda y_\lambda(s) - A_{n\lambda}y_{n\lambda}(s)}%
                {y_\lambda(s)-y_{n\lambda}(s)}\,ds = 0.
  \]
  The monotonicity of $A_{n\lambda}$ implies
  \begin{align*}
    &\ip{A_\lambda y_\lambda(s)-A_{n\lambda} y_{n\lambda}(s)}%
        {y_\lambda(s)-y_{n\lambda}(s)}\\
    &\qquad  = \ip{A_\lambda y_\lambda(s) - A_{n\lambda} y_\lambda(s)}%
                  {y_\lambda(s) - y_{n\lambda}(s)}
    + \ip{A_{n\lambda} y_\lambda(s) - A_{n\lambda} y_{n\lambda}(s)}%
         {y_\lambda(s)-y_{n\lambda}(s)}\\
    &\qquad \geq \ip{A_\lambda y_\lambda(s) - A_{n\lambda} y_\lambda(s)}%
                   {y_\lambda(s) - y_{n\lambda}(s)}
  \end{align*}
  for all $0 < s \leq T$, hence also
  \begin{align*}
    \frac12 \norm{y_\lambda(t)-y_{n\lambda}(t)}^2 &\leq
    - \int_0^t \ip{A_\lambda y_\lambda(s) - A_{n\lambda} y_\lambda(s)}%
                  {y_\lambda(s) - y_{n\lambda}(s)}\,ds\\
    &\leq \int_0^t \norm{A_\lambda y_\lambda(s) - A_{n\lambda}
    y_\lambda(s)} \norm{y_\lambda(s)-y_{n\lambda}(s)}\,ds\\
    &\leq \frac12 \int_0^t \norm{A_\lambda y_\lambda(s) - A_{n\lambda}
    y_\lambda(s)}^2\,ds 
    + \frac12 \int_0^t \norm{y_\lambda(s)-y_{n\lambda}(s)}^2\,ds,
  \end{align*}
  which in turn yields, by Gronwall's inequality and obvious estimates,
  \[
  \norm{y_\lambda-y_{n\lambda}}^2_{\H^2} \equiv
  \E\sup_{t\leq T} \|y_\lambda(t)-y_{n\lambda}(t)\|^2
  \lesssim_T \E\int_0^T \|A_\lambda y_\lambda(s)
     -A_{n\lambda} y_\lambda(s)\|^2\,ds =: I_6.
  \]
  Collecting estimates, we have
  \[
  \norm{y-y_n}^2_{\H_2} \lesssim_T \sum_{k=1}^6 I_k,
  \]
  where each $I_k$, $k=1,\ldots,6$, depends on $\lambda$ and $n$. We
  are now going to show that $\lim_{n \to \infty} I_k = 0$ for all
  $k=1,\ldots,6$. Let $\delta$ be any positive real number. Since
  $J_\lambda$ is a contraction and $J_\lambda x \to x$ as $\lambda \to
  0$ for all $x \in H$, the dominated convergence theorem implies that
  there exists $\lambda_1>0$ such that
  \[
  I_1 \equiv \E\|y_0-J_{\lambda^{1/4}} y_0\|^2 < \frac{\delta}{9} \qquad
  \forall \lambda < \lambda_1.
  \]
  By exactly the same token, there exists $\lambda_2>0$ such that
  \[
  I_3 \equiv \E\int_0^T \bnorm{(B(s)-J_{\lambda^{1/4}} B(s))Q_M^{1/2}(s)}^2_{\LL_2}
  \,d\langle M,M\rangle(s) < \frac{\delta}{9}
  \qquad \forall \lambda < \lambda_2.
  \]
  One also clearly has that there exists $\lambda_3>0$ such that
  \[
  I_5 \equiv T \sqrt{\lambda} \Big( \E\|y_0\|^2 + \E\int_0^T
  \bnorm{B(s)Q_M^{1/2}(s)}^2_{\LL_2}\,d\langle M,M\rangle(s) \Big)
  < \frac{\delta}{9} \qquad \forall \lambda < \lambda_3.
  \]
  We can safely assert that $I_1 + I_3 + I_5 < \delta/3$ for
  $\lambda=\min(\lambda_1,\lambda_2,\lambda_3)/2$. Let $\lambda$ be
  fixed from now on.

  Note that one has, by the triangle inequality and the above
  estimates,
  \begin{align*}
    I_2 + I_4 &\equiv \E\|y_0-J_{\lambda^{1/4}}^n y_0\|^2
    + \E\int_0^T \bnorm{(B(s)-J^n_{\lambda^{1/4}} B(s))Q_M^{1/2}(s)}^2_{\LL_2}
      \,d\langle M,M\rangle(s)\\
    &\leq 2\E\|y_0-J_{\lambda^{1/4}} y_0\|^2
    + 2\E\|J_{\lambda^{1/4}} y_0 - J_{\lambda^{1/4}}^n y_0\|^2\\
    &\quad + 2\E\int_0^T \bnorm{(B(s)-J_{\lambda^{1/4}}B(s))Q_M^{1/2}(s)}^2_{\LL_2}
    \,d\langle M,M\rangle(s)\\
    &\quad + 2\E\int_0^T \bnorm{(J_{\lambda^{1/4}} B(s) -
    J^n_{\lambda^{1/4}} B(s))Q_M^{1/2}(s)}^2_{\LL_2} \,d\langle M,M\rangle(s)\\
    &\leq \frac49\delta
     + 2\E\|J_{\lambda^{1/4}} y_0 - J_{\lambda^{1/4}}^n y_0\|^2\\
    &\quad + 2\E\int_0^T \bnorm{\bigl(J_{\lambda^{1/4}} B(s) -
    J^n_{\lambda^{1/4}} B(s)\bigr)Q_M^{1/2}(s)}^2_{\LL_2} \,d\langle M,M\rangle(s).
  \end{align*}
  Since $A_n \to A$ in the strong resolvent topology, by the dominated
  convergence theorem we infer that there exists $n_1>0$ such that the
  sum of the last two terms on the right-hand side of the previous
  inequality is not larger than $\delta/9$ for all $n>n_1$, i.e. that
  $\sum_{k=1}^5 I_k < 8\delta/9$ for all $n>n_1$.

  In order to conclude the proof, we only have to show that
  \[
  I_6 \equiv \E\int_0^T \|A_\lambda y_\lambda(s) -A_{n\lambda}
  y_\lambda(s)\|^2\,ds
  \]
  can be bounded by $\delta/9$ for $n$ sufficiently large. To this
  purpose, note that $A_{n\lambda} x \to A_\lambda x$ as $n \to
  \infty$ for all $x\in H$, because
  $A_{n\lambda}=\lambda^{-1}(I-\lambda J_\lambda^n)$. Therefore it is
  enough to show that the dominated convergence theorem can be
  applied. Recalling that both $A_\lambda$ and $A_{n\lambda}$
  have Lipschitz constant not larger than $1/\lambda$, one has
  \[
  \|A_\lambda y_\lambda(s) -A_{n\lambda} y_\lambda(s)\| \leq
  \|A_\lambda y_\lambda(s)\| + \|A_{n\lambda} y_\lambda(s)\| \leq
  \frac{2}{\lambda} \|y_\lambda(s)\|
  \]
  for all $s \in [0,T]$, and $y_\lambda \in L^2(\Omega \times [0,T])
  \subset \H_2$. There exists then $n_2>n_1$ such that for all $n>n_2$
  one has $I_6 < \delta/9$, hence also, by the above, $\sum_1^6
  I_k<\delta$ for all $n>n_2$, which is equivalent to
  $\lim_{n\to\infty} \norm{y-y_n}_{\H_2}=0$, thus concluding the proof.
\end{proof}

We now turn to the case $p>2$, thus providing an extension of Theorem
\ref{thm:yopsc}.

\begin{thm}     \label{thm:nyotta}
  Assume that $A_n \to A$ in the strong resolvent sense as $n \to
  \infty$ and that there exists $p>2$ such that the hypotheses on $B$
  of Proposition \ref{prop:yayo}(ii) are met. Then one has $y_n \to y$
  in $\H_p$ as $n \to \infty$, i.e.
  \[
  \lim_{n \to \infty} \E\sup_{t\leq T}\|y_n(t)-y(t)\|^p = 0.
  \]
\end{thm}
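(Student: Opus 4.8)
The plan is to mirror the proof of Theorem~\ref{thm:nyo2sc}, replacing the $\H_2$-estimates coming from Proposition~\ref{prop:yayo}(i) with the $\H_p$-estimates from Proposition~\ref{prop:yayo}(ii) and Corollary~\ref{cor:utile}. Introduce the Yosida regularizations $A_\lambda$ and $A_{n\lambda}:=A_n(I+\lambda A_n)^{-1}$, together with the regularized equations
\begin{gather*}
dy_\lambda(t)+A_\lambda y_\lambda(t)\,dt = B(t)\,dM(t),\qquad y_\lambda(0)=y_0,\\
dy_{n\lambda}(t)+A_{n\lambda}y_{n\lambda}(t)\,dt = B(t)\,dM(t),\qquad y_{n\lambda}(0)=y_0,
\end{gather*}
and split
\[
\norm{y-y_n}_{\H_p} \leq \norm{y-y_\lambda}_{\H_p}
+ \norm{y_\lambda-y_{n\lambda}}_{\H_p}
+ \norm{y_{n\lambda}-y_n}_{\H_p}.
\]

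For the first and third terms I would invoke the second inequality in Corollary~\ref{cor:utile} applied to $A$ and to $A_n$ respectively, using the shorthand $J_\varepsilon^n:=(I+\varepsilon A_n)^{-1}$; the only point requiring care is the term carrying $(T\lambda)^{p/2}\E\|A_n J_\varepsilon^n y_0\|^p$ and its stochastic-convolution analogue. Choosing $\varepsilon=\lambda^{1/4}$ and noting $A_n(I+\lambda^{1/4}A_n)^{-1}=A_{n\lambda^{1/4}}$ has Lipschitz norm at most $\lambda^{-1/4}$, one gets $\lambda^{p/2}\|A_{n\lambda^{1/4}}x\|^p \leq \lambda^{p/4}\|x\|^p$, so this contribution is bounded by $(T\sqrt{\lambda})^{p/2}$ times $\E\|y_0\|^p + \E(\int_0^T\|B\|^2_\LL\,d[M,M])^{p/2}$, uniformly in $n$, and likewise with $A_n$ replaced by $A$. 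The remaining pieces of $\norm{y-y_\lambda}^p_{\H_p}+\norm{y_n-y_{n\lambda}}^p_{\H_p}$ are $\E\|y_0-J_{\lambda^{1/4}}^{(n)}y_0\|^p$ and $\E(\int_0^T\|(B-J_{\lambda^{1/4}}^{(n)}B)\|^2_\LL\,d[M,M])^{p/2}$; by the triangle inequality these are controlled by the corresponding quantities with $J_{\lambda^{1/4}}$ (which $\to 0$ as $\lambda\to 0$ by contractivity of $J$ and dominated convergence, exactly as for $p=2$) plus cross terms $\E\|J_{\lambda^{1/4}}y_0-J_{\lambda^{1/4}}^n y_0\|^p$ and the convolution analogue, which $\to 0$ as $n\to\infty$ for fixed $\lambda$ by strong-resolvent convergence and dominated convergence.

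For the middle term, $y_\lambda-y_{n\lambda}$ solves the deterministic equation $(y_\lambda-y_{n\lambda})'+A_\lambda y_\lambda-A_{n\lambda}y_{n\lambda}=0$ with zero initial datum; since $A_\lambda,A_{n\lambda}$ are bounded these are strong solutions. Taking the scalar product with $y_\lambda-y_{n\lambda}$, using monotonicity of $A_{n\lambda}$ and Young's inequality exactly as in the proof of Theorem~\ref{thm:nyo2sc}, one arrives at $\|y_\lambda(t)-y_{n\lambda}(t)\|^2 \lesssim_T \int_0^t\|A_\lambda y_\lambda(s)-A_{n\lambda}y_\lambda(s)\|^2\,ds$ via Gronwall; raising to the $p/2$ power, applying Hölder in $s$, taking the supremum and expectation gives
\[
\norm{y_\lambda-y_{n\lambda}}^p_{\H_p} \lesssim_{T,p}
\E\int_0^T \|A_\lambda y_\lambda(s)-A_{n\lambda}y_\lambda(s)\|^p\,ds.
\]
Now $A_{n\lambda}x\to A_\lambda x$ as $n\to\infty$ for every $x\in H$ (because $A_{n\lambda}=\lambda^{-1}(I-J_\lambda^n)$ and $J_\lambda^n\to J_\lambda$ strongly), and $\|A_\lambda y_\lambda(s)-A_{n\lambda}y_\lambda(s)\|\leq \tfrac{2}{\lambda}\|y_\lambda(s)\|$ with $y_\lambda\in L^p(\Omega\times[0,T])$, so dominated convergence kills this term for $\lambda$ fixed and $n$ large.

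Assembling: given $\delta>0$, first pick $\lambda$ small enough that the $n$-independent contributions ($\norm{y-y_\lambda}^p_{\H_p}$ and the Yosida-approximation pieces) are $<\delta/3$; with that $\lambda$ fixed, choose $n$ large enough that the remaining cross terms and the middle term are $<2\delta/3$. The main obstacle is purely bookkeeping: carefully tracking the powers $p/2$ through the Hölder and Young steps and verifying that the bad term $\lambda^{p/2}\|A_{n\lambda^{1/4}}\cdot\|^p$ is genuinely uniform in $n$ after the choice $\varepsilon=\lambda^{1/4}$ — conceptually nothing new beyond the $p=2$ case is needed, since all the maximal inequalities and the Trotter--Kato input are already available for general $p$.
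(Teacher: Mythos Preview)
Your proposal is correct and follows essentially the same route as the paper's own proof: the same three-term splitting via Yosida regularizations, the same application of the $p$-version of Corollary~\ref{cor:utile} with the choice $\varepsilon=\lambda^{1/4}$ to make the $A_nJ^n_\varepsilon$-terms uniform in $n$, the same deterministic Gronwall argument (raised to power $p/2$) for the middle term, and the same $\lambda$-then-$n$ selection at the end. There is no substantive difference in strategy or in the handling of any individual step.
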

\begin{proof}
  We follow the reasoning used in the proof of the previous theorem.
  Denoting the mild solutions to \eqref{approx13} and \eqref{approx23}
  by $y_\lambda$ and $y_{n\lambda}$, respectively, one has
  \[
    \norm{y-y_n}_{\H_p} \leq \norm{y-y_\lambda}_{\H_p}
    + \norm{y_\lambda-y_{n\lambda}}_{\H_p}
    + \norm{y_{n\lambda}-y_n}_{\H_p}.
  \]
  By Corollary \ref{cor:utile} we infer that, for any
  $\varepsilon>0$ and $\lambda>0$, the following inequalities hold
  true:
  \begin{gather*}
  \begin{split}
    \norm{y-y_\lambda}^p_{\H_p} &\lesssim \E\|y_0-\ep{y}_0\|^p 
    + \E\Bigl(\int_0^T \|B(s)-\ep{B}(s)\|^2_\LL\,d[M,M](s)\Bigr)^{p/2}\\
    &\quad + (T\lambda)^{p/2} \biggl( \E\|A\ep{y}_0\|^p
    + \E\Bigl(\int_0^T \|A\ep{B}(s)\|^2_\LL\,d[M,M](s)\Bigr)^{p/2} \biggr).
  \end{split}
  \\
  \begin{split}
    \norm{y_n-y_{n\lambda}}^p_{\H_p} &\lesssim \E\|y_0-J_\varepsilon^ny_0\|^p 
    + \E\Bigl( \int_0^T
      \|B(s)-J^n_\varepsilon B(s)\|^2_\LL\,d[M,M](s)\Bigr)^{p/2}\\
    &\quad + (T\lambda)^{p/2} \biggl( \E\|A_n J^n_\varepsilon y_0\|^p
    + \E\Bigl(\int_0^T
      \|A_n J^n_\varepsilon B(s)\|^2_\LL\,d[M,M](s)\Bigr)^{p/2} \biggr).
  \end{split}
  \end{gather*}
  Choosing $\varepsilon=\lambda^{1/4}$, and recalling that, for any
  $n\in\enne$, the Lipschitz constant of $A_{n\lambda}$ is
  bounded above by $1/\lambda$, we obtain
  \begin{align*}
  &\lambda^{p/2} \biggl( \E\|A_n J^n_\varepsilon y_0\|^p
  + \E\Bigl(\int_0^T \|A_n J^n_\varepsilon B(s)\|^2_\LL\,d[M,M](s)\Bigr)^{p/2}\biggr)\\
  &\qquad = \lambda^{p/2} \biggl( \E\|A_{n\lambda^{1/4}} y_0\|^p
  + \E\Bigl( \int_0^T \|A_{n\lambda^{1/4}} B(s)\|^2_\LL\,d[M,M](s) \Bigr)^{p/2}
    \biggr)\\
  &\qquad \leq \lambda^{p/4} \biggl( \E\|y_0\|^p
  + \E\Bigl( \int_0^T \|B(s)\|^2_\LL\,d[M,M](s) \Bigr)^{p/2}
    \biggr).
  \end{align*}
  The same estimate holds if $A_n$ is replaced by $A$, therefore we
  have
  \begin{align*}
  &\norm{y-y_\lambda}^p_{\H_p} + \norm{y_n-y_{n\lambda}}^p_{\H_p}\\
  &\qquad \lesssim_p
  \E\|y_0-J_{\lambda^{1/4}} y_0\|^p + \E\|y_0-J_{\lambda^{1/4}}^n y_0\|^p\\
  &\qquad\quad + \E\Bigl(\int_0^T \|B(s)-J_{\lambda^{1/4}} B(s)\|^2_\LL
                   \,d[M,M](s)\Bigr)^{p/2}\\
  &\qquad\quad + \E\Bigl( \int_0^T \|B(s)-J^n_{\lambda^{1/4}} B(s)\|^2_\LL
                   \,d[M,M](s)\Bigr)^{p/2}\\
  &\qquad\quad + T^{p/2} \lambda^{p/4} \biggl( \E\|y_0\|^p
    + \E\Bigl( \int_0^T \|B(s)\|^2_\LL\,d[M,M](s) \Bigr)^{p/2} \Biggr)\\
  &\qquad =: I_1 + I_2 + I_3 + I_4 + I_5.
  \end{align*}
  Moreover, as in the proof of the previous theorem, we have
  \[
  \norm{y_\lambda(t)-y_{n\lambda}(t)}^2 \leq
  \int_0^t \norm{A_\lambda y_\lambda(s) - A_{n\lambda}
  y_\lambda(s)}^2\,ds + \int_0^t \norm{y_\lambda(s)-y_{n\lambda}(s)}^2\,ds,
  \]
  which in turn yields, by taking $p/2$-th power and applying
  Gronwall's inequality,
  \[
  \norm{y_\lambda-y_{n\lambda}}^p_{\H^p}
  \lesssim_{T,p} \E\int_0^T \|A_\lambda y_\lambda(s)
     -A_{n\lambda} y_\lambda(s)\|^p\,ds =: I_6,
  \]
  hence also, collecting estimates, $\norm{y-y_n}^p_{\H_p}
  \lesssim_{T,p} \sum_{k=1}^6 I_k$. Let $\delta$ be an arbitrary but
  fixed positive real number. By a reasoning already used above, we
  infer that there exist $\lambda_1$, $\lambda_2$, $\lambda_3>0$ such that
  \begin{align*}
    I_1 &\equiv \E\|y_0-J_{\lambda^{1/4}}y_0\|^p < \delta
      \quad &\forall \lambda<\lambda_1,\\
    I_3 &\equiv \E\Bigl( \int_0^T \|B(s)-J_{\lambda^{1/4}} B(s)\|^2_\LL
    \,d[M,M(s)\Bigr)^{p/2} < \delta
      \quad &\forall \lambda < \lambda_2,\\
    I_5 &\equiv T^{p/2} \lambda^{p/4} \biggl( \E\|y_0\|^p 
      + \E\Bigl( \int_0^T \|B(s)\|^2_\LL\,d[M,M](s)\Bigr)^{p/2}
      \biggr) < \delta \quad &\forall \lambda < \lambda_3,
  \end{align*}
  hence $I_1+I_3+I_5<3\delta$ for
  $\lambda:=\min(\lambda_1,\lambda_2,\lambda_3)/2$, which will remain
  fixed for the rest of the proof.
  Moreover, one has
  \begin{align*}
  I_2^{1/p} &\equiv \big\| y_0 - J^n_{\lambda^{1/4}}y_0 \big\|_{\L_p}
  \leq \big\| y_0 - J_{\lambda^{1/4}}y_0 \big\|_{\L_p}
  + \big\| J_{\lambda^{1/4}}y_0 - J^n_{\lambda^{1/4}}y_0 \big\|_{\L_p}\\
  &\leq \delta^{1/p}
  + \big\| J_{\lambda^{1/4}}y_0 - J^n_{\lambda^{1/4}}y_0 \big\|_{\L_p},\\
  I_4^{1/p} &\equiv \norm{\| B-J^n_{\lambda^{1/4}}B\|^2_\LL
    \cdot [M,M]}_{\L_p}\\
  &\leq 2 \norm{\| B-J_{\lambda^{1/4}}B\|^2_\LL \cdot [M,M]}_{\L_p} +
  2 \norm{\| J_{\lambda^{1/4}}B-J^n_{\lambda^{1/4}}B\|^2_\LL \cdot
    [M,M]}_{\L_p}\\
  &\leq 2\delta^{1/p} + 2 \norm{\|
    J_{\lambda^{1/4}}B-J^n_{\lambda^{1/4}}B\|^2_\LL \cdot
    [M,M]}_{\L_p}.
  \end{align*}
  Since $J^n_{\lambda^{1/4}} \to J_{\lambda^{1/4}}$ as $n \to
  \infty$, there exists $n_0$ such that the sum of the last two terms
  on the right-hand side of the previous inequalities is not larger
  than $\delta^{1/p}$ for all $n>n_0$, hence $\sum_{k=1}^5 I_k
  \lesssim \delta$. The proof is concluded is we show that $I_6
  \lesssim \delta$ for $n$ large enough. But this follows by observing
  that
  \[
  I_6 \equiv \E\int_0^T \|A_\lambda y_\lambda(s) -A_{n\lambda}
  y_\lambda(s)\|^p\,ds
  \]
  converges to zero as $n \to \infty$ by the dominated convergence
  theorem, $A_{n\lambda} x \to A_\lambda x$ as $n \to \infty$ for all
  $x\in H$, $y_\lambda \in L_p(\Omega \times [0,T]) \subset \H_p$, and
  $\|A_\lambda y_\lambda(s) -A_{n\lambda} y_\lambda(s)\| \leq
  2\lambda^{-1} \|y_\lambda(s)\|$.
\end{proof}

\section{Convergence of stochastic convolutions II}
\label{sec:conv2}
Let us consider the equation with Poisson random noise
\begin{equation}     \label{eq:pippa}
  dy(t)+Ay(t)\,dt = \int_Z G(t,z)\,\bar{\mu}(dt,dz), \qquad
  y(0)=y_0,
\end{equation}
where $\bar{\mu}$ is a compensated Poisson random measure, as defined in
Section \ref{sec:main}.

Consider the equations
\begin{equation}
\label{eq:yolp}
dy_\lambda(t) + A_\lambda y_\lambda(t)\,dt = 
\int_Z G(t,z)\,\bar{\mu}(dt,dz), \qquad
y(0)=y_0,
\end{equation}
and
\begin{equation}     \label{eq:rospa}
  dy_n(t) + A_ny_n(t)\,dt = \int_Z G(t,z)\,\bar{\mu}(dt,dz), \qquad
  y(0)=y_0,  
\end{equation}
where $A_\lambda$ and $A_n$ are defined as in the previous sections.

Recall that the mild solutions to \eqref{eq:pippa}, \eqref{eq:yolp}
and \eqref{eq:rospa} defined by the formula of variations of contants
are well-defined processes belonging to $\H_p$, $p \geq 2$, as soon as
$G \in L_p(\Omega\times[0,T],L_2(Z) \cap L_p(Z))$. To render notation
less burdensome, we shall denote the latter space by $\mathsf{G}_p$.

\begin{thm}
  Let $p \geq 2$ and $G \in \mathsf{G}_p$. Denoting the mild solutions
  to \eqref{eq:pippa} and \eqref{eq:yolp} by $y$ and $y_\lambda$,
  respectively, one has $y_\lambda \to y$ in $\H_p$ as $\lambda \to
  0$.
\end{thm}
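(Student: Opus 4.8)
The plan is to transcribe the proofs of Theorems~\ref{thm:yo2sc} and~\ref{thm:yopsc} to the Poisson setting, using the maximal inequality \eqref{eq:**} in place of Proposition~\ref{prop:yayo}. Set $\ep{y}_0:=(I+\varepsilon A)^{-1}y_0$ and $\ep{G}:=(I+\varepsilon A)^{-1}G$ for $\varepsilon>0$ (the resolvent acting on the $H$-valued argument, pointwise in $(\omega,t,z)$; note $\ep{G}\in\mathsf{G}_p$ since $(I+\varepsilon A)^{-1}$ is a contraction), and consider the auxiliary regularized equations
\[
  d\ep{y}(t)+A\ep{y}(t)\,dt=\int_Z\ep{G}(t,z)\,\m(dt,dz),\qquad \ep{y}(0)=\ep{y}_0,
\]
\[
  d\ep{y}_\lambda(t)+A_\lambda\ep{y}_\lambda(t)\,dt=\int_Z\ep{G}(t,z)\,\m(dt,dz),\qquad \ep{y}_\lambda(0)=\ep{y}_0.
\]
The triangle inequality gives
\[
  \norm{y-y_\lambda}_{\H_p}\leq\norm{y-\ep{y}}_{\H_p}+\norm{\ep{y}-\ep{y}_\lambda}_{\H_p}+\norm{\ep{y}_\lambda-y_\lambda}_{\H_p},
\]
and the proof amounts to controlling the three terms on the right-hand side.

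For the first and third terms, since $A_\lambda$ is a bounded monotone operator, $-A_\lambda$ generates a contraction semigroup, so \eqref{eq:**} applies both to $y-\ep{y}$ and to $\ep{y}_\lambda-y_\lambda$ \emph{with a constant independent of $\lambda$} (this is where $\eta=0$ is used), yielding
\[
  \norm{y-\ep{y}}_{\H_p}+\norm{\ep{y}_\lambda-y_\lambda}_{\H_p}\lesssim\norm{y_0-\ep{y}_0}_{\L_p}+\norm{G-\ep{G}}_{\mathsf{G}_p}.
\]
As $(I+\varepsilon A)^{-1}$ is a contraction converging strongly to the identity, the right-hand side tends to $0$ as $\varepsilon\to0$ by dominated convergence; hence, given $\delta>0$, we may fix $\varepsilon>0$ so that the left-hand side is $<\delta/2$ for every $\lambda>0$, and keep $\varepsilon$ fixed thereafter.

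For the middle term we reuse, almost verbatim, the deterministic argument of Theorem~\ref{thm:yo2sc}: since $A_\lambda$ is bounded, $\ep{y}_\lambda$ is a strong solution, and for $\lambda,\mu>0$ the difference $\ep{y}_\lambda-\ep{y}_\mu$ solves the \emph{deterministic} equation $(\ep{y}_\lambda-\ep{y}_\mu)'+A_\lambda\ep{y}_\lambda-A_\mu\ep{y}_\mu=0$ with null initial datum (the stochastic parts cancel). Using $\lambda A_\lambda=I-J_\lambda$, $A_\lambda=AJ_\lambda$, and monotonicity of $A$, exactly as in Theorem~\ref{thm:yo2sc} one gets
\[
  \norm{\ep{y}_\lambda-\ep{y}_\mu}^2(t)\lesssim(\lambda+\mu)\int_0^t\bigl(\norm{A_\lambda\ep{y}_\lambda(s)}^2+\norm{A_\mu\ep{y}_\mu(s)}^2\bigr)\,ds,
\]
whence, raising to the power $p/2$ and using H\"older's inequality when $p>2$,
\[
  \E\sup_{t\leq T}\norm{\ep{y}_\lambda-\ep{y}_\mu}^p(t)\lesssim_{T,p}(\lambda+\mu)^{p/2}\,\E\int_0^T\bigl(\norm{A_\lambda\ep{y}_\lambda(s)}^p+\norm{A_\mu\ep{y}_\mu(s)}^p\bigr)\,ds.
\]
The key point is to bound $\E\int_0^T\norm{A_\lambda\ep{y}_\lambda(s)}^p\,ds$ uniformly in $\lambda$: from the variation of constants formula, the commutation of $A_\lambda$ with $e^{-tA_\lambda}$, and $\norm{A_\lambda x}\leq\norm{Ax}$ for $x\in D(A)$, one sees that $A_\lambda\ep{y}_\lambda$ is the mild solution of the equation with operator $A_\lambda$ and data $A\ep{y}_0=A_\varepsilon y_0$, $A\ep{G}=A_\varepsilon G$, which are bounded with norm $\lesssim\varepsilon^{-1}$ times the original data; thus \eqref{eq:**} gives $\E\int_0^T\norm{A_\lambda\ep{y}_\lambda(s)}^p\,ds\lesssim_T\norm{A_\varepsilon y_0}^p_{\L_p}+\norm{A_\varepsilon G}^p_{\mathsf{G}_p}\lesssim_\varepsilon\norm{y_0}^p_{\L_p}+\norm{G}^p_{\mathsf{G}_p}$. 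Hence $\lambda\mapsto\ep{y}_\lambda$ is a Cauchy net in $\H_p$ converging to some $\ep{z}$ with $\norm{\ep{y}_\lambda-\ep{z}}_{\H_p}\lesssim_{T,\varepsilon,p}\lambda^{1/2}$. To identify $\ep{z}=\ep{y}$ one needs $\ep{y}_\lambda(t)\to\ep{y}(t)$ in $\L_2$ for each $t$, which follows — as in the lemma preceding Theorem~\ref{thm:yo2sc} — from the semigroup convergence $e^{-tA_\lambda}h\to S(t)h$ and dominated convergence in the isometry formula for the $\m$-stochastic integral (using $\mathsf{G}_p\hookrightarrow\mathsf{G}_2$ and that $S(t)$, $e^{-tA_\lambda}$ are contractions). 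Then $\norm{\ep{y}-\ep{y}_\lambda}_{\H_p}\lesssim_{T,\varepsilon,p}\lambda^{1/2}<\delta/2$ for $\lambda$ small, which together with the previous paragraph concludes the proof. I do not expect any serious obstacle here: the theorem is essentially a direct transcription of Theorems~\ref{thm:yo2sc}/\ref{thm:yopsc}, and the only points that require genuine attention are the $\lambda$-independence of the constant in \eqref{eq:**} and the fact that $A_\lambda\ep{y}_\lambda$ again solves a well-posed equation with bounded data — both entirely parallel to Section~\ref{sec:conv1}.
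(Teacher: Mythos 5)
Your proposal is correct and follows essentially the same route as the paper: the same triangle-inequality decomposition with the resolvent-regularized data $\ep{y}_0$, $\ep{G}$, the maximal inequality \eqref{eq:**} (with $\lambda$-independent constant) for the outer terms, and the deterministic monotonicity argument of Theorems~\ref{thm:yo2sc}/\ref{thm:yopsc} for the middle term, yielding the Cauchy-net estimate $\norm{\ep{y}_\lambda-\ep{y}_\mu}_{\H_p}\lesssim_{T,\varepsilon,p}\sqrt{\lambda+\mu}$. The details you supply (the uniform bound on $\E\int_0^T\norm{A_\lambda\ep{y}_\lambda(s)}^p\,ds$ and the identification of the limit of the Cauchy net with $\ep{y}$ via pointwise $\L_2$-convergence) are exactly the ones the paper delegates to the earlier proofs with the phrase ``we omit some details.''
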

\begin{proof}
  The proof is similar to the one of Theorem \ref{thm:yopsc}, and
  therefore we omit some details. One has
  \begin{equation} 
    \label{eq:trippa}
    \| y-y_\lambda \|_{\H_p} \leq \|
    y-\ep{y} \|_{\H_p} + \| \ep{y} -
    \ep{y}_\lambda \|_{\H_p} + \| \ep{y}_\lambda -
    y_\lambda \|_{\H_p},
  \end{equation}
  where $y_\lambda$ and $\ep{y}_\lambda$ are solutions to the
  regularized equations
  \begin{gather*}
    d\ep{y}(t) + A\ep{y}(t)\,dt = \int_Z
    \ep{G}(t,z)\,\bar{\mu}(dt,dz), \qquad
    y(0)=\ep{y}_0,\\
    d\ep{y}_\lambda(t) + A_\lambda \ep{y}_\lambda(t)\,dt = \int_Z
    \ep{G}(t,z)\,\bar{\mu}(dt,dz), \qquad y(0)=\ep{y}_0,
  \end{gather*}
  where $G^\varepsilon:=(I+\varepsilon A)^{-1}G$.
  Let $\delta>0$ be arbitrary but fixed. By virtue of
  \[
  \norm{y-\ep{y}}_{\H_p} + \norm{\ep{y}_\lambda - y_\lambda}_{\H_p}
  \leq \norm{y_0-\ep{y}_0}_{\L_p} + \norm{G-\ep{G}}_{\mathsf{G}_p},
  \]
  there exists $\varepsilon>0$ (which will remain fixed for the rest
  of the proof) such that $\| y-\ep{y} \|_{\H_p} + \| \ep{y}_\lambda -
  y_\lambda\|_{\H_p} < \delta/2$ for all $\lambda>0$. Recalling the
  maximal inequality (\ref{eq:**}'), the same argument used in the
  proof of Theorem \ref{thm:yopsc} yields
  \[
  \norm{y_\lambda-y_\mu}_{\H_p} \lesssim_{T,\varepsilon,p}
  \sqrt{\lambda+\mu} \bigl( \|y_0\|_{\L_p} + \|G\|_{\mathsf{G}_p} \bigr),
  \]
  hence that $\lambda \mapsto \ep{y}_\lambda$ is a Cauchy net in
  $\mathbb{H}_p$, and that
  \[
  \| \ep{y}_\lambda - \ep{y} \|_{\H_p} \lesssim_{T,\varepsilon,p}
  \sqrt{\lambda},
  \]
  which can be made smaller than $\delta/2$ choosing $\lambda$ small
  enough.
\end{proof}

In the final result of this section, we prove the analogon of Theorem
\ref{thm:nyotta} for equations driven by Poisson random noise.
\begin{thm}     \label{thm:trippona}
  Assume that $A_n \to A$ in the strong resolvent sense as $n \to
  \infty$, and that there exists $p \in [2,\infty$ such that $G \in
  \mathsf{G}_p$. Denoting the mild solutions to \eqref{eq:pippa} and
  \eqref{eq:rospa} by $y$ and $y_n$, respectively, one has $y_n
  \to y$ in $\H_p$ as $n \to \infty$.
\end{thm}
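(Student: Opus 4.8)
The plan is to mimic, \emph{mutatis mutandis}, the proof of Theorem~\ref{thm:nyotta}, with the martingale stochastic convolution replaced by the Poisson one and the maximal inequality \eqref{eq:maxip} replaced by \eqref{eq:**} (in its $\mathsf{G}_p$-norm form). Introduce the Yosida approximations $A_\lambda$ and $A_{n\lambda}:=A_n(I+\lambda A_n)^{-1}$, let $y_\lambda$, $y_{n\lambda}$ be the mild solutions to \eqref{eq:yolp} and \eqref{eq:rospa}, and split
\[
\norm{y-y_n}_{\H_p} \leq \norm{y-y_\lambda}_{\H_p} + \norm{y_\lambda-y_{n\lambda}}_{\H_p} + \norm{y_{n\lambda}-y_n}_{\H_p}.
\]
The first step is the Poisson analogue of Corollary~\ref{cor:utile}, obtained from the proof of the first theorem of this section exactly as Corollary~\ref{cor:utile} was obtained from that of Theorem~\ref{thm:yopsc}: with $\ep{G}:=(I+\varepsilon A)^{-1}G$ and $J^n_\varepsilon:=(I+\varepsilon A_n)^{-1}$,
\begin{align*}
\norm{y-y_\lambda}^p_{\H_p} &\lesssim_p \E\|y_0-\ep{y}_0\|^p + \bnorm{G-\ep{G}}^p_{\mathsf{G}_p} + (T\lambda)^{p/2}\bigl( \E\|A\ep{y}_0\|^p + \bnorm{A\ep{G}}^p_{\mathsf{G}_p} \bigr),\\
\norm{y_n-y_{n\lambda}}^p_{\H_p} &\lesssim_p \E\|y_0-J^n_\varepsilon y_0\|^p + \bnorm{G-J^n_\varepsilon G}^p_{\mathsf{G}_p} + (T\lambda)^{p/2}\bigl( \E\|A_n J^n_\varepsilon y_0\|^p + \bnorm{A_n J^n_\varepsilon G}^p_{\mathsf{G}_p} \bigr).
\end{align*}

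Choosing $\varepsilon=\lambda^{1/4}$ and using that $A_n J^n_{\lambda^{1/4}}=A_{n\lambda^{1/4}}$ (and likewise $A(I+\lambda^{1/4}A)^{-1}=A_{\lambda^{1/4}}$) is Lipschitz with constant at most $\lambda^{-1/4}$, the last groups of terms are bounded by $T^{p/2}\lambda^{p/4}\bigl(\E\|y_0\|^p+\bnorm{G}^p_{\mathsf{G}_p}\bigr)$, so that $\norm{y-y_\lambda}^p_{\H_p}+\norm{y_n-y_{n\lambda}}^p_{\H_p}\lesssim_{T,p} I_1+\dots+I_5$, where $I_1:=\E\|y_0-J_{\lambda^{1/4}}y_0\|^p$, $I_2:=\E\|y_0-J^n_{\lambda^{1/4}}y_0\|^p$, $I_3:=\bnorm{G-J_{\lambda^{1/4}}G}^p_{\mathsf{G}_p}$, $I_4:=\bnorm{G-J^n_{\lambda^{1/4}}G}^p_{\mathsf{G}_p}$ and $I_5:=T^{p/2}\lambda^{p/4}(\E\|y_0\|^p+\bnorm{G}^p_{\mathsf{G}_p})$. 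Fix $\delta>0$. Since $J_\mu$ is a contraction on $H$ with $J_\mu h\to h$ for all $h$, and $y_0\in\L_p$, $G\in\mathsf{G}_p$, dominated convergence (applied in $H$, then in $L_2(Z)\cap L_p(Z)$, then in $L_p(\Omega\times[0,T])$) gives $I_1,I_3,I_5\to0$ as $\lambda\to0$; choose and fix $\lambda$ with $I_1+I_3+I_5\lesssim\delta$. Writing $y_0-J^n_\mu y_0=(y_0-J_\mu y_0)+(J_\mu y_0-J^n_\mu y_0)$ and analogously for $G$ (with $\mu=\lambda^{1/4}$ now fixed), and using the strong resolvent convergence $J^n_\mu\to J_\mu$ together with the same dominated-convergence argument, there is $n_0$ with $I_2+I_4\lesssim\delta$ for all $n>n_0$.

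For the middle term one argues exactly as in Theorem~\ref{thm:nyotta}: since $A_\lambda$ and $A_{n\lambda}$ are bounded, $y_\lambda$ and $y_{n\lambda}$ are strong solutions driven by the \emph{same} noise $\int_Z G\,\m(dt,dz)$, so $z_n:=y_\lambda-y_{n\lambda}$ is absolutely continuous and solves the pathwise equation $z_n'+A_\lambda y_\lambda-A_{n\lambda}y_{n\lambda}=0$, $z_n(0)=0$. Pairing with $z_n$, discarding the nonnegative term $\ip{A_{n\lambda}z_n}{z_n}$ by monotonicity, and using Young's and Gronwall's inequalities yields $\sup_{t\leq T}\|z_n(t)\|^2\lesssim_T\int_0^T\|A_\lambda y_\lambda(s)-A_{n\lambda}y_\lambda(s)\|^2\,ds$; raising to the power $p/2$, using H\"older in $s$ and taking expectations gives
\[
\norm{y_\lambda-y_{n\lambda}}^p_{\H_p}\lesssim_{T,p}\E\int_0^T\|A_\lambda y_\lambda(s)-A_{n\lambda}y_\lambda(s)\|^p\,ds=:I_6.
\]
Since $A_{n\lambda}=\lambda^{-1}(I-J^n_\lambda)\to\lambda^{-1}(I-J_\lambda)=A_\lambda$ strongly while $\|A_\lambda y_\lambda(s)-A_{n\lambda}y_\lambda(s)\|\leq2\lambda^{-1}\|y_\lambda(s)\|$ with $y_\lambda\in L_p(\Omega\times[0,T])$ and $\lambda$ fixed, dominated convergence gives $I_6\to0$, hence $I_6\lesssim\delta$ for $n$ large. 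Collecting estimates, $\norm{y-y_n}^p_{\H_p}\lesssim_{T,p}\delta$ for $n$ large, and letting $\delta\to0$ finishes the proof.

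I do not anticipate any genuine obstacle here; the substantive work is already contained in Section~\ref{sec:conv1} and in the preceding theorems. The only points needing a little care are (i) the dominated-convergence arguments in $\mathsf{G}_p=L_p(\Omega\times[0,T],L_2(Z)\cap L_p(Z))$, where one exploits the pointwise (in $(\omega,t,z)$) bound $\|(I-J_\mu)G\|_H\leq\|G\|_H$ with $\|G(\omega,t,\cdot)\|_H\in L_2(Z)\cap L_p(Z)$ for a.e.\ $(\omega,t)$, and (ii) the observation that $y_\lambda$ and $y_{n\lambda}$ have identical jump parts, so that their difference is absolutely continuous and the deterministic energy estimate may be run pathwise.
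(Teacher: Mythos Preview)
Your proposal is correct and follows essentially the same route as the paper's proof: the same three-term splitting via Yosida approximations, the Poisson analogue of Corollary~\ref{cor:utile} with the choice $\varepsilon=\lambda^{1/4}$ producing the terms $I_1,\dots,I_5$, and the same pathwise energy estimate for $y_\lambda-y_{n\lambda}$ leading to $I_6$, all closed by dominated convergence. The only slip is notational: $y_{n\lambda}$ should be the mild solution of the equation with $A_{n\lambda}$ (as the paper writes out explicitly), not of \eqref{eq:rospa}, but your subsequent use of $y_{n\lambda}$ makes clear that this is what you intend.
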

\begin{proof}
  The proof follows the same lines of the proofs of Theorems
  \ref{thm:nyo2sc} and \ref{thm:nyotta}, therefore we omit some
  details. Let $y_\lambda$ and $y_{n\lambda}$ be the solutions to the
  regularized equations
  \begin{gather*}
    dy_\lambda(t)+A_\lambda y_\lambda(t)\,dt= \int_Z G(t,z)\,\m(dt,dz),
    \qquad y_\lambda(0)=y_0,\\
    dy_{n\lambda}(t)+A_{n\lambda} y_{n\lambda}(t)\,dt = \int_Z G(t,z)\,\m(dt,dz),
    \qquad y_{n\lambda}(0)=y_0,
  \end{gather*}
  and observe that
  \[
    \norm{y-y_n}_{\H_p} \leq \norm{y-y_\lambda}_{\H_p}
    + \norm{y_\lambda-y_{n\lambda}}_{\H_p}
    + \norm{y_{n\lambda}-y_n}_{\H_p}.
  \]
  In analogy to Corollary \ref{cor:utile}, one has, for all
  $\lambda>0$, $\varepsilon>0$,
  \[
    \norm{y-y_\lambda}_{\H_p} + \norm{y_{n\lambda}-y_n}_{\H_p}
    \lesssim \sum_{k=1}^5 I_k,
  \]
  where
  \begin{align*}
    I_1 &:= \norm{y_0-J_{\lambda^{1/4}}y_0}_{\L_p}
    \xrightarrow{\lambda \to 0} 0,\\
    I_2 &:= \| y_0-J^n_{\lambda^{1/4}}y_0 \|_{\L_p}
    \leq \| y_0-J_{\lambda^{1/4}}y_0 \|_{\L_p} 
    + \| J_{\lambda^{1/4}}y_0-J^n_{\lambda^{1/4}}y_0 \|_{\L_p}\\
    I_3 &:= \| G-J_{\lambda^{1/4}}G \|_{\mathsf{G}_p}
    \xrightarrow{\lambda \to 0} 0,\\
    I_4 &:= \| G-J^n_{\lambda^{1/4}}G \|_{\mathsf{G}_p}
    \leq \| G-J_{\lambda^{1/4}}G \|_{\mathsf{G}_p}
    + \| J_{\lambda^{1/4}}G-J^n_{\lambda^{1/4}}G \|_{\mathsf{G}_p}\\
    I_5 &:= T^{1/2} \lambda^{1/4} \bigl( \|y_0 \|_{\L_p} +
    \|G\|_{\mathsf{G}_p} \bigr)\xrightarrow{\lambda \to 0} 0,
  \end{align*}
  and
  \[
  \| J_{\lambda^{1/4}}y_0-J^n_{\lambda^{1/4}}y_0 \|_{\L_p}
  + \| J_{\lambda^{1/4}}G-J^n_{\lambda^{1/4}}G \|_{\mathsf{G}_p}
  \xrightarrow{n \to \infty} 0 \qquad \forall \lambda>0,
  \]
  that is, for any given $\delta>0$, one can choose $\lambda$ and
  $n_0>0$ such that $\norm{y-y_\lambda}_{\H_p} +
  \norm{y_{n\lambda}-y_n}_{\H_p}<\delta$ for all $n>n_0$.  The proof
  is finished noting that
  \[
  \norm{y_\lambda-y_{n\lambda}}^p_{\H_p} \lesssim \E\int_0^T
  \|A_\lambda y_\lambda(s) -A_{n\lambda} y_\lambda(s)\|^p\,ds,
  \]
  which converges to zero as $n \to \infty$ by the dominated
  convergence theorem and $y_\lambda \in \H_p$.
\end{proof}

\section{Proof of the main results}     \label{sec:prove}
By definition of mild solution, one has, in the case of Theorem
\ref{thm:nyo2},
\begin{align*}
  u(t) &= S(t)u_0 - \int_0^t S(t-s)f(u(s))\,ds + \int_0^t
  S(t-s)B(u(s-))\,dM(s),\\
  u_n(t) &= S_n(t)u_{0n} - \int_0^t S_n(t-s)f_n(u_n(s))\,ds + \int_0^t
  S_n(t-s)B_n(u_n(s-))\,dM(s),
\end{align*}
and, in the case of Theorem \ref{thm:nyop},
\begin{align*}
  u(t) &= S(t)u_0 - \int_0^t S(t-s)f(u(s))\,ds + \int_0^t\!\int_Z
  S(t-s)G(z,u(s-))\,\m(ds,dz),\\
  u_n(t) &= S_n(t)u_{0n} - \int_0^t S_n(t-s)f_n(u_n(s))\,ds
  + \int_0^t\!\int_Z S_n(t-s)G_n(z,u_n(s-))\,\m(ds,dz).
\end{align*}
We shall set, for notational convenience,
\begin{gather*}
S \ast \phi(t) := \int_0^t S(t-s)\phi(s)\,ds,\\
S \diamond \Phi(t) := \int_0^t S(t-s)\Phi(s)\,dM(s),
\qquad S \diamond \Psi(t) := \int_0^t\!\int_Z S(t-s)\Psi(z,s)\,\m(ds,dz)
\end{gather*}
(even though we use the same symbol to denote stochastic convolutions
with respect to a martingale and to a compensated Poisson measure,
there will be no risk of confusion).

In the case of Theorem \ref{thm:nyo2}, the triangle inequality yields,
for any $0 < t \leq T$,
\begin{equation}     \label{eq:pilu}
\begin{aligned}
\|u - u_n \|_{\H_p(t)} &\leq
\|Su_0 - S_nu_{0n}\|_{\H_p(t)}
+ \| S \ast f(u) - S_n \ast f_n(u_n) \|_{\H_p(t)}\\
&\quad + \| S \diamond B(u) - S_n \diamond B_n(u_n) \|_{\H_p(t)}.
\end{aligned}
\end{equation}
The same estimate holds in the case of Theorem \ref{thm:nyop},
replacing $B$ and $B_n$ with $G$ and $G_n$, respectively.

\begin{lemma}     \label{lm:uno}
  Let $p \geq 2$ and $0\leq t \leq T$. If $\xi_n \to \xi$ in $\L_p$
  as $n \to \infty$, then $S_n\xi_n \to S\xi$ in $\H_p(t)$.
\end{lemma}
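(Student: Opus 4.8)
The plan is to use the triangle inequality
\[
\|S_n\xi_n - S\xi\|_{\H_p(t)} \leq \|S_n\xi_n - S_n\xi\|_{\H_p(t)} + \|S_n\xi - S\xi\|_{\H_p(t)},
\]
and to estimate the two summands separately, the first by the uniform quasi-contractivity of the semigroups $S_n$, the second pathwise via the Trotter--Kato theorem together with dominated convergence.

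For the first summand I would recall that, by hypothesis (i), the semigroups $S_n$ are uniformly quasi-contractive: there is a constant $N=N(\eta,T)$ (one may take $N=e^{\eta T}$) with $\|S_n(s)\|_{\LL(H)} \leq N$ for all $s\in[0,T]$ and all $n\in\enne$. Hence, for $\P$-a.e.\ $\omega$, $\sup_{s\leq t}\|S_n(s)(\xi_n(\omega)-\xi(\omega))\| \leq N\|\xi_n(\omega)-\xi(\omega)\|$; raising to the $p$-th power and taking expectations gives $\|S_n\xi_n - S_n\xi\|_{\H_p(t)} \leq N\|\xi_n-\xi\|_{\L_p}$, which tends to $0$ as $n\to\infty$ by assumption.

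For the second summand I would argue pathwise. For fixed $\omega$, $\xi(\omega)$ is a fixed element of $H$, and $s\mapsto S_n(s)\xi(\omega)$ is the mild solution to $v'+A_nv=0$, $v(0)=\xi(\omega)$; since $A_n\to A$ in the strong resolvent sense, Trotter--Kato's theorem (Theorem~\ref{thm:titikaka} applied with vanishing forcing term and constant initial datum, or directly \cite[p.~88]{Pazy}) yields
\[
\varphi_n(\omega) := \sup_{s\leq t}\bigl\|S_n(s)\xi(\omega) - S(s)\xi(\omega)\bigr\|^p \xrightarrow{n\to\infty} 0.
\]
Each $\varphi_n$ is measurable, the supremum being attained along a fixed countable dense subset of $[0,t]$ by strong continuity, and $0\leq\varphi_n(\omega)\leq (2N)^p\|\xi(\omega)\|^p$, a function in $L_1(\Omega)$ because $\xi\in\L_p$. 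Dominated convergence then gives $\E\varphi_n = \|S_n\xi - S\xi\|^p_{\H_p(t)}\to 0$, and combining the two estimates finishes the proof.

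The argument is largely routine; the only point requiring a little care is the passage from the pathwise convergence furnished by Trotter--Kato to convergence in the $\H_p(t)$-norm, which is precisely where the uniform bound on $\|S_n(s)\|_{\LL(H)}$ enters --- both to produce the integrable dominating function $(2N)^p\|\xi\|^p$ and, implicitly, to ensure the applicability of the Trotter--Kato theorem itself.
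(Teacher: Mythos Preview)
Your proof is correct and follows essentially the same approach as the paper: the same triangle inequality, the uniform quasi-contractivity bound $e^{\eta T}$ for the first summand, and pathwise Trotter--Kato plus dominated convergence for the second. Your version is slightly more careful (you address measurability of the supremum explicitly), but the argument is otherwise identical.
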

\begin{proof}
  It clearly suffices to prove the claim for $t=T$.  By the triangle
  inequality we can write
  \[
  \|S_n\xi_n - S\xi\|_{\H_p} \leq 
  \|S_n\xi_n - S_n\xi\|_{\H_p}
  + \|S_n\xi - S\xi\|_{\H_p}.
  \]
  Since $\sup_{t \leq T} \|S_n(t)\xi - S(t)\xi\|^p \leq e^{p\eta T}
  \|\xi_n-\xi\|^p$, taking expectations on both sides and passing to
  the limit, we get $\|S_n(t)\xi_n - S_n(t)\xi\|_{\H_p} \to 0$ as $n
  \to \infty$. Moreover, by the Trotter-Kato's theorem,
  $S_n(\cdot)\xi$ converges to $S(\cdot)\xi$ $\P$-a.s. uniformly on
  compact sets, i.e.
  \[
  \lim_{n \to \infty} \sup_{t \leq T} \| S_n(t)\xi - S(t)\xi \| = 0,
  \]
  which implies, together with
  \[
  \sup_{t \leq T} \|S_n(t)\xi - S(t)\xi \|^p \lesssim e^{p \eta T}
  \|\xi\|^p
  \]
  and $\E\|\xi\|^p<\infty$, that $\|S_n(t)\xi - S(t)\xi\|_{\H_p} \to
  0$ as $n \to \infty$, thanks to the dominated convergence theorem.
\end{proof}

\begin{lemma}     \label{lm:due}
  Let $p \geq 2$, $0 < t \leq T$, and $v$, $w \in \H_p$. For every
  $\delta>0$ there exist $n_0 \in \enne$ and $\gamma>0$, independent
  of $\delta$ and $n_0$, such that
  \[
  \| S_n \ast f_n(v) - S \ast f(w) \|^p_{\H_p(t)} \leq
  \delta + \gamma \int_0^t \| v - w \|^p_{\H_p(s)}\,ds
  \]
  for all $n>n_0$.
\end{lemma}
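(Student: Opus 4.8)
\textbf{Proof plan for Lemma \ref{lm:due}.}
The plan is to split the difference $S_n \ast f_n(v) - S \ast f(w)$ into three pieces via the triangle inequality:
\[
S_n \ast f_n(v) - S \ast f(w) = \bigl( S_n \ast f_n(v) - S_n \ast f(v) \bigr) + \bigl( S_n \ast f(v) - S \ast f(v) \bigr) + \bigl( S \ast f(v) - S \ast f(w) \bigr),
\]
and estimate each term in $\H_p(t)$. For the first term, since $S_n$ is a quasi-contraction semigroup with bound $e^{\eta s} \leq e^{\eta T}$ uniformly in $n$ (because $\eta_n \leq \eta$), one has
\[
\bignorm{ S_n \ast f_n(v) - S_n \ast f(v) }_{\H_p(t)} \leq e^{\eta T} \int_0^t \bignorm{ f_n(v(s)) - f(v(s)) }_{\L_p}\,ds,
\]
and since $f_n \to f$ pointwise with uniformly bounded Lipschitz constants, $\| f_n(v(s)) - f(v(s)) \| \to 0$ pointwise in $\omega$ with the domination $\| f_n(v(s)) - f(v(s)) \| \lesssim 1 + \| v(s) \| \in \L_p$; hence by dominated convergence (on $\Omega \times [0,t]$) this term tends to $0$ as $n \to \infty$, so it is below $\delta^{1/p}/3$, say, for $n$ large. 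The third term is handled directly by the quasi-contractivity of $S$ and the Lipschitz bound on $f$:
\[
\bignorm{ S \ast f(v) - S \ast f(w) }^p_{\H_p(t)} \lesssim_{T,p,\eta,L_f} \int_0^t \bignorm{ v - w }^p_{\H_p(s)}\,ds,
\]
using Jensen/H\"older to bring the $p$-th power inside the time integral; this contributes the $\gamma \int_0^t \| v-w \|^p_{\H_p(s)}\,ds$ term, with $\gamma$ depending only on $T$, $p$, $\eta$, $L_f$ and not on $\delta$ or $n_0$.

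The remaining term $S_n \ast f(v) - S \ast f(v)$ is the one requiring the Trotter--Kato ingredient. Here I would fix the integrand $g := f(v) \in \H_p$ (indeed $f(v) \in L_p(\Omega \times [0,T], H)$ by the linear growth of $f$ and $v \in \H_p$) and argue that the deterministic convolution map $g \mapsto S_n \ast g$ converges to $g \mapsto S \ast g$ on such integrands. For each fixed $\omega$ and each $s$, $S_n \ast g(s,\omega) \to S \ast g(s,\omega)$: this follows because $S_n(r)h \to S(r)h$ uniformly for $r$ in compacts for each fixed $h$ (Trotter--Kato, as in Lemma \ref{lm:uno}), and a routine approximation of the Bochner integral $\int_0^s S_n(s-r) g(r)\,dr$ by simple integrands together with the uniform bound $\sup_{r \leq T}\|S_n(r)\|_{\LL(H)} \leq e^{\eta T}$ promotes this to convergence of the convolutions; moreover $\sup_{s \leq t}\|S_n \ast g(s) - S \ast g(s)\|^p \lesssim_{T,\eta} \bigl( \int_0^T \|g(r)\|\,dr \bigr)^p \lesssim_{T,p} \int_0^T \|g(r)\|^p\,dr$, which is in $L_1(\Omega)$. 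Dominated convergence then gives $\|S_n \ast f(v) - S \ast f(v)\|_{\H_p(t)} \to 0$, so this term too is below $\delta^{1/p}/3$ for $n$ large. Combining the three estimates and raising to the $p$-th power (absorbing constants, e.g. via $(a+b+c)^p \leq 3^{p-1}(a^p+b^p+c^p)$) yields the claim with a suitably relabelled $\delta$ and the $\gamma$ identified above.

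The main obstacle I anticipate is the middle term: one must be careful that the convergence $S_n \ast f(v) \to S \ast f(v)$ is genuinely uniform in $t \leq T$ and holds in the $\H_p$ (i.e. $L_p(\Omega)$-of-sup) sense, not merely pointwise in $t$ or in probability. The fix is exactly the two-step argument above --- first get $\P$-a.s. uniform-in-$t$ convergence from Trotter--Kato applied pathwise to the Bochner integral (using that $f(v(\cdot,\omega)) \in L_1([0,T],H)$ for a.e.\ $\omega$), then dominate by the $\omega$-integrable bound coming from the linear growth of $f$ and $v \in \H_p$, and invoke dominated convergence on $\Omega$. One should also note the minor point that $\gamma$ must be exhibited before $n_0$ is chosen (it depends only on the fixed data $T,p,\eta,L_f$), which is why the third term is estimated without any reference to $n$, while $\delta$ absorbs the two vanishing terms.
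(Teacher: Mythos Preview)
Your proposal is correct and follows essentially the same approach as the paper: a three-term triangle-inequality splitting where one term is handled by the uniform Lipschitz bound (giving the $\gamma$-integral), one by pointwise convergence $f_n\to f$ plus dominated convergence, and one by Trotter--Kato plus dominated convergence on $\Omega$. The only cosmetic differences are that the paper anchors the intermediate terms on $w$ rather than $v$ (so the Lipschitz piece is $S_n\ast f_n(v)-S_n\ast f_n(w)$ and the Trotter--Kato piece is $S_n\ast f(w)-S\ast f(w)$), and that the paper simply invokes Theorem~\ref{thm:titikaka} (the inhomogeneous Trotter--Kato theorem) for the middle step, whereas you sketch its proof inline via approximation by simple integrands.
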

\begin{proof}
  The triangle inequality yields
  \begin{align*}
    \| S_n \ast f_n(v) - S \ast f(w) \|^p_{\H_p(t)} &\leq
    3^p \| S_n \ast f_n(v) - S_n \ast f_n(w) \|_{\H_p(t)}\\
    &\quad + 3^p \| S_n \ast f_n(w) - S_n \ast f(w) \|_{\H_p(t)}\\
    &\quad + 3^p \| S_n \ast f(w) - S \ast f(w) \|_{\H_p(t)}.
  \end{align*}
  Recalling that the operator norm of $S_n(t)$ is bounded by $e^{\eta
    t}$, by H\"older's inequality one has that there exists a constant
  $N=N(T,p,\eta)$ such that
  \begin{align*}
  &\| S_n \ast f_n(v) - S_n \ast f_n(w) \|^p_{\H_p(t)}\\
  &\qquad = \E\sup_{s \leq t} \Bigl\| \int_0^s S_n(s-r)
  \bigl(f_n(v(r))-f_n(w(r))\bigr)\,dr \Bigr\|^p\\
  &\qquad \leq N \, \E\sup_{s \leq t} \int_0^s 
  \bigl\|f_n(v(s))-f_n(w(s))\bigr\|^p\,dr\\
  &\qquad \leq N L_f^p \int_0^t \E\sup_{r\leq s}\|v(r)-w(r)\|^p\,dr\\
  &\qquad = N L_f^p \int_0^t \|v-w\|^p_{\H_p(s)}\,dr,
  \end{align*}
  as well as
  \[
  \| S_n \ast f_n(w) - S_n \ast f(w) \|^p_{\H_p(t)}
  \lesssim_{T,p,\eta} \E\int_0^T \|f_n(w(s)) - f(w(s))\|^p\,ds,
  \]
  where the right-hand side converges to zero as $n \to \infty$ by the
  dominated convergence theorem: in fact, $f_n \to f$ pointwise
  implies $f_n(w(s)) \to f(w(s))$ for all $s \leq T$, and
  \[
  \|f_n(w) - f(w)\| \leq 2L_f \|w\| \in L_p(\Omega\times[0,T]).
  \]
  Therefore there exists $n_1$ such that $\|S_n \ast f_n(w) - S_n \ast
  f(w) \|^p_{\H_p(t)}<3^{-p}\delta/2$ for all $n>n_1$. Theorem
  \ref{thm:titikaka} implies
  \[
  \lim_{n \to \infty} \sup_{t\leq T}\, \bnorm{\bigl(S_n \ast f(w))(t)
    - \bigl(S \ast f(w))(t)} = 0,
  \]
  which implies
  \[
  \| S_n \ast f(w) - S \ast f(w) \|^p_{\H_p(T)} \to 0
  \]
  as $n \to \infty$ by the dominated convergence theorem. In fact, one
  has
  \begin{align*}
  &\E\sup_{t\leq T}\, \bnorm{\bigl(S_n \ast f(w))(t)
      - \bigl(S \ast f(w))(t)}^p\\
  &\hspace{3em} \lesssim
  \E\sup_{t\leq T}\, \bnorm{S_n \ast f(w))(t)}^p
  + \E\sup_{t \leq T} \bnorm{S \ast f(w))(t)}^p\\
  &\hspace{3em} \lesssim_{\eta,\|f\|_{\lip}} \|w\|^p_{\H_p} < \infty.
  \end{align*}
  Therefore there exists $n_2 \in \enne$ such that $\| S_n \ast
  f(w) - S \ast f(w) \|^p_{\H_p(t)} < 3^{-p}\delta/2$ for all $n>n_2$. The
  proof is completed taking $n_0=\max(n_1,n_2)$ and $\gamma=3^pNL_f^p$.
\end{proof}

\subsection{Proof of Theorem \ref{thm:nyo2}}
\label{ssec:prova1}
The following estimate is crucial for the proof of the theorem.
\begin{lemma}     \label{lm:tre}
  Let $0 < t \leq T$. For every $\delta>0$ there exist $n_0 \in
  \enne$ and $\gamma>0$, independent of $\delta$ and $n_0$, such
  that
  \[
  \| S_n \diamond B_n(u_{n-}) - S \diamond B(u_-) \|^2_{\H_2(t)} \leq
  \delta + \gamma \int_0^t \| u_n - u \|^2_{\H_2(s)}\,ds
  \]
  for all $n>n_0$.
\end{lemma}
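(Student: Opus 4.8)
\section*{Proof proposal for Lemma~\ref{lm:tre}}

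The plan is to mimic the proof of Lemma~\ref{lm:due}, with the stochastic maximal inequality of Proposition~\ref{prop:yayo}(i) replacing the deterministic convolution estimates and Theorem~\ref{thm:nyo2sc} replacing Theorem~\ref{thm:titikaka}. Writing $B_n(u_-)$, $B_n(u_{n-})$ for the processes $s \mapsto B_n(u(s-))$ and $s \mapsto B_n(u_n(s-))$, I would split by the triangle inequality
\[
\| S_n \diamond B_n(u_{n-}) - S \diamond B(u_-) \|^2_{\H_2(t)}
\lesssim J_1 + J_2 + J_3,
\]
with
\begin{align*}
J_1 &:= \| S_n \diamond B_n(u_{n-}) - S_n \diamond B_n(u_-) \|^2_{\H_2(t)},\\
J_2 &:= \| S_n \diamond B_n(u_-) - S_n \diamond B(u_-) \|^2_{\H_2(t)},\\
J_3 &:= \| S_n \diamond B(u_-) - S \diamond B(u_-) \|^2_{\H_2(t)}.
\end{align*}
Since $\eta_n \leq \eta$, the constant in Proposition~\ref{prop:yayo}(i) applied to $S_n$ is no larger than the one for $S$, so it may be taken uniform in $n$; combined with hypothesis~\emph{(Q)} this gives, for every $\Phi \in \Lambda^2_M(K,H)$, the bound $\| S_n \diamond \Phi \|^2_{\H_2(t)} \lesssim_\eta \E\int_0^t \|\Phi(s)Q^{1/2}\|^2_{\LL_2}\,ds$, which I would apply to each of $J_1,J_2,J_3$ (the three integrands lie in $\Lambda^2_M(K,H)$ by the linear growth of $B$, $B_n$ and $u,u_n \in \H_2$, exactly as checked for $J_3$ below).

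For $J_1$, taking $\Phi = B_n(u_{n-}) - B_n(u_-)$ and using the uniform Lipschitz bound of hypothesis~\emph{(iii)} together with $\|(B_n(x)-B_n(y))Q^{1/2}\|_{\LL_2(K,H)} = \|B_n(x)-B_n(y)\|_{\LL_2(Q^{1/2}K,H)} \leq L_B\|x-y\|$, I would get
\[
J_1 \lesssim_\eta L_B^2\, \E\int_0^t \|u_n(s-)-u(s-)\|^2\,ds
\leq L_B^2 \int_0^t \|u_n-u\|^2_{\H_2(s)}\,ds,
\]
the last inequality because the left limit of a c\`adl\`ag path is dominated by its running supremum. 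This is the term producing the integral in the claim, with $\gamma$ a constant depending only on $\eta$ and $L_B$ --- and on neither $\delta$ nor $n$.

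For $J_2$, taking $\Phi = B_n(u_-) - B(u_-)$ gives $J_2 \lesssim_\eta \E\int_0^t \|B_n(u(s-))-B(u(s-))\|^2_{\LL_2(Q^{1/2}K,H)}\,ds$, and I would conclude by dominated convergence: the pointwise convergence $B_n \to B$ of hypothesis~\emph{(iii)} sends the integrand to $0$ for a.e.\ $(\omega,s)$, while the uniform Lipschitz bound and the implied linear growth give the domination $\|B_n(u(s-))-B(u(s-))\|_{\LL_2(Q^{1/2}K,H)} \lesssim 1+\|u(s-)\|$, which lies in $L_2(\Omega\times[0,T])$ as $u \in \H_2$. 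Hence there is $n_1$ with $J_2 < \delta/2$ for all $n>n_1$.

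The crux is $J_3$, which is nothing but the continuity of the stochastic convolution in the operator; this is exactly Theorem~\ref{thm:nyo2sc}, and the only remaining point is that $s \mapsto B(u(s-))$ is admissible, i.e. lies in $\Lambda^2_M(K,H)$. It is predictable because $u(\cdot-)$ is left-continuous and adapted and $B$ is continuous, and, by hypothesis~\emph{(Q)} and the linear growth of $B$,
\[
\E\int_0^T \bnorm{B(u(s-))Q_M^{1/2}(s)}^2_{\LL_2}\,d\langle M,M\rangle(s)
\leq \E\int_0^T \bnorm{B(u(s-))}^2_{\LL_2(Q^{1/2}K,H)}\,ds
\lesssim \E\int_0^T (1+\|u(s-)\|)^2\,ds < \infty.
\]
Applying Theorem~\ref{thm:nyo2sc} with this fixed integrand and zero initial datum yields $\|S_n \diamond B(u_-) - S \diamond B(u_-)\|_{\H_2} \to 0$, hence the same in $\H_2(t)$; so there is $n_2$ with $J_3 < \delta/2$ for $n>n_2$. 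With $n_0 := \max(n_1,n_2)$ and $\gamma$ the constant from the $J_1$ bound, collecting the three estimates finishes the proof. I expect $J_3$ to be the only nontrivial ingredient, but it has already been established in Section~\ref{sec:conv1}, so here it reduces to the routine check above.
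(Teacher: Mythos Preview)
Your proposal is correct and follows essentially the same approach as the paper's own proof: the same three-term triangle-inequality splitting, the maximal inequality \eqref{eq:maxi2} plus the uniform Lipschitz bound for $J_1$, dominated convergence for $J_2$, and Theorem~\ref{thm:nyo2sc} for $J_3$. If anything, you are slightly more careful than the paper in explicitly verifying that $s \mapsto B(u(s-))$ lies in $\Lambda^2_M(K,H)$ before invoking Theorem~\ref{thm:nyo2sc}.
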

\begin{proof}
  The triangle inequality yields
  \begin{align*}
    \norm{S_n \diamond B_n(u_{n-}) - S \diamond B(u_-)}^2_{\H_2(t)} &\leq
    9 \norm{S_n \diamond B_n(u_{n-}) - S_n \diamond B_n(u_-)}^2_{\H_2(t)}\\
    &\quad + 9 \norm{S_n \diamond B_n(u_-) - S_n \diamond B(u_-)}^2_{\H_2(t)}\\
    &\quad + 9 \norm{S_n B(u_-) - S \diamond B(u_-)}^2_{\H_2(t)}.
  \end{align*}
  Thanks to the maximal inequality \eqref{eq:maxi2}, there exists a
  constant $N=N(\eta)$ such that
  \begin{align*}
  &\norm{S_n \diamond B_n(u_{n-})  - S_n \diamond B_n(u_-) }^2_{\H_2(t)}\\
  &\qquad = \E\sup_{s \leq t} \Bigl\| \int_0^s S_n(s-r)
    \bigl(B_n(u_n(r-))-B_n(u(r-))\bigr)\,dM(r) \Bigr\|^2\\
  &\qquad \leq N\,\E\int_0^t \bigl\|
    \bigl(B_n(u_n(s-))-B_n(u(s-))\bigr)Q_M^{1/2}(s)\bigr\|^2_{\LL_2}
    \,d\langle M,M\rangle(s)\\
  &\qquad \leq N L_B^2\E\int_0^t \sup_{r\leq s}\norm{u_n(r)-u(r)}^2\,ds\\
  &\qquad = N L_B^2 \int_0^t \norm{u_n-u}^2_{\H_2(s)}\,ds,
  \end{align*}
  as well as
  \begin{align*}
  &\norm{S_n \diamond B_n(u_-)  - S_n \diamond B(u_-) }^2_{\H_2(t)}\\
  &\qquad \lesssim_\eta \E\int_0^t \bigl\|
    \bigl(B_n(u(s-))-B(u(s-))\bigr)Q_M^{1/2}\bigr\|^2_{\LL_2}
    \,d\langle M,M\rangle(s)\\
  &\qquad \leq \E\int_0^T \bigl\| B_n(u(s))-B(u(s)) \bigr\|^2_Q\,ds,
  \end{align*}
  which converges to zero by pointwise convergence of $B_n$ to $B$ and
  the dominated convergence theorem, taking into account that
  \[
  \bigl\| B_n(u)-B(u) \bigr\|_Q \lesssim 2\|u\| 
  \in L_2(\Omega \times [0,T]).
  \]
  Therefore there exists $n_1 \in \enne$ such that $\norm{S_n
    \diamond B_n(u_-) - S_n \diamond B(u_-) }^2_{\H_2(t)}<\delta/18$ for
  all $n>n_1$.  Finally, Theorem \ref{thm:nyo2sc} implies that there
  exists $n_2 \in \enne$ such that
  \begin{align*}
    \norm{S_n \diamond B(u_-) - S \diamond B(u_-)}^2_{\H_2(t)} <
    \frac{\delta}{18}
  \end{align*}
  for all $n>n_2$. The proof is completed setting $n_0=\max(n_1,n_2)$
  and $\gamma=9 N L_f^2$.
\end{proof}

\begin{proof}[Proof of Theorem \ref{thm:nyo2}]
  By \eqref{eq:pilu} we have, for any $0 < t \leq T$,
  \begin{align*}
    \|u - u_n \|^2_{\H_2(t)} &\leq
    9\|Su_0 - S_nu_{0n}\|^2_{\H_2(t)}
    + 9\| S \ast f(u) - S_n \ast f_n(u_n) \|^2_{\H_2(t)}\\
    &\quad + 9\| S \diamond B(u) - S_n \diamond B_n(u_n) \|^2_{\H_2(t)}.
  \end{align*}
  Let $\varepsilon>0$ be arbitrary and $\varepsilon'=e^{-\gamma
    T}\varepsilon$, where $\gamma$ is a constant whose value will be
  specified later.  By Lemma \ref{lm:uno} there exists $n_1\in\enne$,
  independent of $t$, such that
  \[
  9\|Su_0 - S_nu_{0n}\|^2_{\H_2(t)} < \frac{\varepsilon'}{3} \qquad
  \forall n>n_1.
  \]
  Similarly, by Lemmata \ref{lm:due} and \ref{lm:tre} there exist
  $n_2$, $n_3 \in \enne$, independent of $t$, and $\gamma_1$,
  $\gamma_2>0$, depending only on the Lipschitz constants of $f$ and
  $B$, such that
  \begin{align*}
    9\| S \ast f(u) - S_n \ast f_n(u_n) \|^2_{\H_2(t)}
    &< \frac{\varepsilon'}{3} + \gamma_1 \int_0^t \norm{u_n-u}^2_{\H_2(s)}\,ds
    &\forall n>n_2,\\
    9\| S \diamond B(u) - S_n \diamond B_n(u_n) \|^2_{\H_2(t)}
    &< \frac{\varepsilon'}{3} + \gamma_2 \int_0^t \norm{u_n-u}^2_{\H_2(s)}\,ds
    &\forall n>n_3.
  \end{align*}
  Therefore, setting $n_0=n_1+n_2+n_3$ and $\gamma=\gamma_1+\gamma_2$,
  we are left with
  \[
  \norm{u_n-u}^2_{\H_2(t)} < \varepsilon' + \gamma \int_0^t
  \norm{u_n-u}^2_{\H_2(s)}\,ds \qquad \forall t \in ]0,T],
  \]
  which implies, by Gronwall's inequality,
  \[
  \norm{u_n-u}^2_{\H_2(T)} < e^{\gamma T} \varepsilon' = \varepsilon
  \qquad \forall n>n_0.
  \]
  Since $\varepsilon$ is arbitrary, this is equivalent to asserting
  that
  \[
  \lim_{n\to\infty} \norm{u_n-u}_{\H_2(T)} = 0 \qedhere
  \]
\end{proof}

\subsection{Proof of Theorem \ref{thm:nyop}}
As in the previous subsection, we establish first a key estimate.
\begin{lemma}     \label{lm:treppe}
  Let $2 \leq p < \infty$ and $0 < t \leq T$. For every $\delta>0$
  there exist $n_0 \in \enne$ and $\gamma>0$, independent of $\delta$
  and $n_0$, such that
  \[
  \| S_n \diamond G_n(u_{n-}) - S \diamond G(u_-) \|^p_{\H_p(t)} \leq
  \delta + \gamma \int_0^t \| u_n - u \|^p_{\H_p(s)}\,ds
  \]
  for all $n>n_0$.
\end{lemma}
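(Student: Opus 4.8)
The plan is to reproduce the proof of Lemma \ref{lm:tre} almost verbatim, replacing the martingale stochastic convolution and the maximal inequality \eqref{eq:maxi2} by the compensated-Poisson stochastic convolution and the maximal inequality \eqref{eq:**}, and replacing Theorem \ref{thm:nyo2sc} by its Poisson counterpart Theorem \ref{thm:trippona}. By the triangle inequality and the bound $\|a+b+c\|^p\le 3^{p-1}\bigl(\|a\|^p+\|b\|^p+\|c\|^p\bigr)$,
\[
\| S_n \diamond G_n(u_{n-}) - S \diamond G(u_-) \|^p_{\H_p(t)} \le 3^{p-1}\,(J_1+J_2+J_3),
\]
where
\[
\begin{aligned}
J_1 &:= \| S_n \diamond G_n(u_{n-}) - S_n \diamond G_n(u_-) \|^p_{\H_p(t)},\\
J_2 &:= \| S_n \diamond G_n(u_-) - S_n \diamond G(u_-) \|^p_{\H_p(t)},\\
J_3 &:= \| S_n \diamond G(u_-) - S \diamond G(u_-) \|^p_{\H_p(t)}.
\end{aligned}
\]
The three terms are estimated in direct analogy with the three terms appearing in the proof of Lemma \ref{lm:tre}.

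For $J_1$ I would apply \eqref{eq:**} on the interval $[0,t]$, with $S_n$ in place of $S$, to the integrand $(s,z)\mapsto G_n(z,u_n(s-))-G_n(z,u(s-))$, and then invoke the uniform Lipschitz bound from \emph{(iii')}, i.e.\ $\bnorm{G_n(\cdot,h)-G_n(\cdot,h')}_{L_2(Z)\cap L_p(Z)}\le L_G\|h-h'\|$; since $u(s-)=u(s)$ and $u_n(s-)=u_n(s)$ for a.e.\ $s$, this yields
\[
J_1 \lesssim_{p,\eta} L_G^p\,\E\int_0^t \|u_n(s)-u(s)\|^p\,ds \le L_G^p \int_0^t \|u_n-u\|^p_{\H_p(s)}\,ds .
\]
For $J_2$, \eqref{eq:**} gives
\[
J_2 \lesssim_{p,\eta} \E\int_0^T \bnorm{G_n(\cdot,u(s-))-G(\cdot,u(s-))}^p_{L_2(Z)\cap L_p(Z)}\,ds,
\]
whose integrand tends to $0$ pointwise in $(\omega,s)$ by the pointwise convergence in \emph{(iii')} and is dominated, uniformly in $n$, by an element of $L_1(\Omega\times[0,T])$: indeed $G_n(\cdot,0)\to G(\cdot,0)$ in $L_2(Z)\cap L_p(Z)$ forces $\sup_n\bnorm{G_n(\cdot,0)}_{L_2(Z)\cap L_p(Z)}<\infty$, hence the uniform linear bound $\bnorm{G_n(\cdot,h)}_{L_2(Z)\cap L_p(Z)}\lesssim 1+\|h\|$, and $u\in\H_p$. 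Dominated convergence then gives $J_2\to0$, so $J_2<\delta/(2\cdot 3^{p-1})$ for all $n$ larger than some $n_1$.

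For $J_3$ I would note that $s\mapsto G(\cdot,u(s-))$ belongs to $\mathsf{G}_p$ (by the same linear-growth estimate together with $u\in\H_p$), so that Theorem \ref{thm:trippona} applies and yields $S_n\diamond G(u_-)\to S\diamond G(u_-)$ in $\H_p$; hence $J_3<\delta/(2\cdot 3^{p-1})$ for all $n$ larger than some $n_2$. Taking $n_0:=\max(n_1,n_2)$ and $\gamma:=3^{p-1}NL_G^p$, with $N=N(p,\eta)$ the constant from the maximal inequality used for $J_1$, then gives the asserted estimate. The only step requiring real care is the uniformity in $n$ of all the implied constants: the $n$-independence of the maximal-inequality constant rests on the quasi-contraction bound being uniform ($\eta_n\le\eta$ for every $n$, via the dilation argument underlying \eqref{eq:**}), while the dominating function in $J_2$ is uniform in $n$ thanks to the uniform Lipschitz and linear-growth bounds on the family $(G_n)$ established above; apart from this, the proof is a transcription of that of Lemma \ref{lm:tre}, with Theorem \ref{thm:trippona} in the role of Theorem \ref{thm:nyo2sc}.
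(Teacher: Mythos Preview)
Your proposal is correct and follows essentially the same route as the paper's proof: the same three-term splitting via the triangle inequality, the maximal inequality \eqref{eq:**} for $J_1$ and $J_2$, dominated convergence for $J_2$, and Theorem \ref{thm:trippona} for $J_3$, with $\gamma$ of the form $3^{p}N L_G^p$ (the paper uses $3^p$ rather than your sharper $3^{p-1}$). Your explicit remarks on the uniformity in $n$ of the maximal-inequality constant (via $\eta_n\le\eta$) and of the dominating function in $J_2$ spell out points the paper leaves implicit.
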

\begin{proof}
  The triangle inequality yields
  \begin{align*}
    \norm{S_n \diamond G_n(u_{n-}) - S \diamond G(u_-)}^p_{\H_p(t)} &\lesssim_p
    \norm{S_n \diamond G_n(u_{n-}) - S_n \diamond G_n(u_-)}^p_{\H_p(t)}\\
    &\quad + \norm{S_n \diamond G_n(u_-) - S_n \diamond G(u_-)}^p_{\H_p(t)}\\
    &\quad + \norm{S_n G(u_-) - S \diamond G(u_-)}^p_{\H_p(t)},
  \end{align*}
  where the implicit constant in the inequality can be taken equal to
  $N_1=N_1(p)=3^p$.  Thanks to the maximal inequality
  \eqref{eq:**}, there exists a constant $N_2=N_2(\eta,p)$ such that
  \begin{align*}
  \norm{S_n \diamond G_n(u_{n-})  - S_n \diamond G_n(u_-) }^p_{\H_p(t)}
  &\leq N_2\, \E\int_0^t \bigl\| G_n(u_n(s)) - G_n(u(s))
     \bigr\|^p_{L_p(Z)\cap L_2(Z)}\,ds\\
  &\leq N_2 L_G^p \int_0^t \norm{u_n-u}^p_{\H_p(s)}\,ds.
  \end{align*}
  Similarly, one also has
  \[
  \norm{S_n \diamond G_n(u_-)  - S_n \diamond G(u_-) }^p_{\H_p(t)}
  \lesssim_{\eta,p} \E\int_0^t \bigl\|
    G_n(u(s))-G(u(s)) \bigr\|^p_{L_p(Z)\cap L_2(Z)}\,ds
  \]
  which converges to zero by pointwise convergence of $G_n$ to $G$ and
  the dominated convergence theorem. In particular, there exists $n_1
  \in \enne$ such that
  \[
  \norm{S_n \diamond G_n(u_-) - S_n \diamond G(u_-)}^p_{\H_p(t)}
  < \frac12\,\frac{\delta}{3^p} \qquad \forall n>n_1.
  \]
  Finally, Theorem \ref{thm:trippona} implies that there exists $n_2
  \in \enne$ such that
  \begin{align*}
    \norm{S_n \diamond G(u_-) - S \diamond G(u_-)}^p_{\H_p(t)} <
    \frac12\,\frac{\delta}{3^p} \qquad \forall n>n_2.
  \end{align*}
  The proof is completed setting $n_0=\max(n_1,n_2)$ and $\gamma=N_1 N_2
  L_G^p$.
\end{proof}

\begin{proof}[Proof of Theorem \ref{thm:nyop}]
  By \eqref{eq:pilu} we have, for any $0 < t \leq T$,
  \begin{align*}
    \|u - u_n \|^p_{\H_p(t)} &\leq
    N_1 \|Su_0 - S_nu_{0n}\|^p_{\H_p(t)}
    + N_1 \| S \ast f(u) - S_n \ast f_n(u_n) \|^p_{\H_p(t)}\\
    &\quad + N_1 \| S \diamond G(u) - S_n \diamond G_n(u_n) \|^p_{\H_p(t)},
  \end{align*}
  where $N_1=3^p$. Let $\varepsilon>0$ be arbitrary and
  $\varepsilon'=e^{-\gamma T}\varepsilon$, where $\gamma$ is a
  constant whose value will be specified later.  By Lemma \ref{lm:uno}
  there exists $n_1\in\enne$, independent of $t$, such that
  \[
  N_1 \|Su_0 - S_nu_{0n}\|^p_{\H_p(t)} < \frac{\varepsilon'}{3} \qquad
  \forall n>n_1.
  \]
  Similarly, by Lemmata \ref{lm:due} and \ref{lm:treppe} there exist
  $n_2$, $n_3 \in \enne$, independent of $t$, and $\gamma_1$,
  $\gamma_2>0$, depending only on the Lipschitz constants of $f$ and
  $G$, such that
  \begin{align*}
    N_1\| S \ast f(u) - S_n \ast f_n(u_n) \|^p_{\H_p(t)}
    &< \frac{\varepsilon'}{3} + \gamma_1 \int_0^t \norm{u_n-u}^p_{\H_p(s)}\,ds
    &\forall n>n_2,\\
    N_1\| S \diamond G(u) - S_n \diamond G_n(u_n) \|^p_{\H_p(t)}
    &< \frac{\varepsilon'}{3} + \gamma_2 \int_0^t \norm{u_n-u}^p_{\H_p(s)}\,ds
    &\forall n>n_3.
  \end{align*}
  Therefore, setting $n_0=n_1+n_2+n_3$ and $\gamma=\gamma_1+\gamma_2$,
  we are left with
  \[
  \norm{u_n-u}^p_{\H_p(t)} < \varepsilon' + \gamma \int_0^t
  \norm{u_n-u}^p_{\H_p(s)}\,ds \qquad \forall t \in ]0,T],
  \]
  which implies, by Gronwall's inequality,
  \[
  \norm{u_n-u}^p_{\H_p(T)} < e^{\gamma T} \varepsilon' = \varepsilon
  \qquad \forall n>n_0,
  \]
  i.e. 
  \[
  \lim_{n\to\infty} \norm{u_n-u}_{\H_p(T)} = 0
  \]
  because $\varepsilon$ is arbitrary.
\end{proof}

\section{On equations with additive martingale noise}
It was observed in Remark \ref{rmk:tanti} that the reason for
considering equations driven by $M$ only in $\H_2$ is that we do not
know whether it is possible to find a Lipschitz condition involving
only $B$ (and at most a ``deterministic'' quantity depending on $M$,
such as $Q$) such that a fixed point argument could be used to prove
well-posedness in $\H_p$. In the case of equations with additive noise
the problem disappears, and it is immediate to deduce the following
result.
\begin{thm}
  Let $p>2$. Assume that hypotheses (i) and (ii) of Theorem
  \ref{thm:nyo2} are satisfied, and that
  \begin{itemize}
  \item[\emph{(iii')}] $B$, $B_n$ are predictable $\LL(K,H)$-valued processes
    such that
    \[
    \bigl(\|B\|^2_\LL \cdot [M,M]\bigr)^{1/2} \in \L_p,
    \qquad \bigl(\|B_n - B\|^2_\LL \cdot [M,M]\bigr)^{1/2}
    \xrightarrow{n\to\infty} 0 \quad \text{in } \L_p;
    \]
  \item[\emph{(iv')}] $u_{0n} \to u_0$ in $\L_p$ as $n\to\infty$.
  \end{itemize}
  Denoting by $u$ and $u_n$ the mild solutions to
  \[
  du + Au\,dt + f(u)\,dt = B\,dM, \qquad u(0)=u_0,
  \]
  and
  \[
  du_n + A_nu_n\,dt + f_n(u_n)\,dt = B_n\,dM, \qquad u(0)=u_{0n},
  \]
  respectively, one has $u_n \to u$ in $\H_p$.
\end{thm}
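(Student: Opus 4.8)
The plan is to follow verbatim the scheme of the proof of Theorem~\ref{thm:nyop}, exploiting the fact that with additive noise the stochastic convolution $S\diamond B$ no longer depends on the unknown $u$ and can therefore be treated as a free term rather than via a Lipschitz-in-$u$ estimate. First I would reduce to the case $\eta=0$ (so that $A$ and all the $A_n$ are maximal monotone and generate contraction semigroups) as in Remark~\ref{rmk:noloss} and in the proofs of the main theorems, and write the mild solutions in variation-of-constants form, $u=Su_0-S\ast f(u)+S\diamond B$ and $u_n=S_nu_{0n}-S_n\ast f_n(u_n)+S_n\diamond B_n$, with the notation of Section~\ref{sec:prove}. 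Raising the triangle inequality to the $p$-th power gives, for $0<t\le T$,
\[
\|u-u_n\|^p_{\H_p(t)} \lesssim_p \|Su_0-S_nu_{0n}\|^p_{\H_p(t)} + \|S\ast f(u)-S_n\ast f_n(u_n)\|^p_{\H_p(t)} + \|S\diamond B-S_n\diamond B_n\|^p_{\H_p(t)}.
\]

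The first two summands are handled exactly as in the proof of Theorem~\ref{thm:nyop}, without any change: Lemma~\ref{lm:uno} gives $\|Su_0-S_nu_{0n}\|_{\H_p(t)}\to 0$ as $n\to\infty$, uniformly in $t$, and Lemma~\ref{lm:due} (applied with $v=u_n$, $w=u$) shows that for every $\delta>0$ there are $n_0\in\enne$ and $\gamma>0$, depending only on $\|f\|_{\lip}$, $T$ and $p$, such that $\|S\ast f(u)-S_n\ast f_n(u_n)\|^p_{\H_p(t)}\le\delta+\gamma\int_0^t\|u-u_n\|^p_{\H_p(s)}\,ds$ for all $n>n_0$ and all $t\le T$.

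The only genuinely new ingredient is the stochastic term, which I would split as $\|S\diamond B-S_n\diamond B_n\|_{\H_p(t)}\le\|S\diamond B-S_n\diamond B\|_{\H_p(t)}+\|S_n\diamond(B-B_n)\|_{\H_p(t)}$. The first summand tends to zero by Theorem~\ref{thm:nyotta}, whose hypothesis on the integrand is precisely $\bigl(\|B\|^2_\LL\cdot[M,M]\bigr)^{1/2}\in\L_p$, i.e. the first half of (iii'). For the second summand I would invoke the maximal inequality $(\ref{eq:maxip}')$: since we are in the case $\eta=0$, each $-A_n$ generates a contraction semigroup, so the constant there may be taken independent of $n$, whence
\[
\|S_n\diamond(B-B_n)\|_{\H_p(t)} \lesssim_p \bnorm{\bigl(\|B-B_n\|^2_\LL\cdot[M,M]\bigr)^{1/2}}_{\L_p} \xrightarrow{n\to\infty} 0
\]
by the second half of (iii'). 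The crucial point is that this term contributes only a constant, \emph{not} an integral of $\|u-u_n\|_{\H_p}$, so the Gronwall structure of the estimate is untouched; hence there is $n_1$, independent of $t$, such that $\|S\diamond B-S_n\diamond B_n\|^p_{\H_p(t)}$ is as small as desired for all $n>n_1$ and all $t\le T$.

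Collecting the three estimates: fixing $\varepsilon>0$ and setting $\varepsilon'=e^{-\gamma T}\varepsilon$, one obtains $\|u-u_n\|^p_{\H_p(t)}<\varepsilon'+\gamma\int_0^t\|u-u_n\|^p_{\H_p(s)}\,ds$ for all $0<t\le T$ and all sufficiently large $n$, so that Gronwall's inequality yields $\|u-u_n\|^p_{\H_p}<e^{\gamma T}\varepsilon'=\varepsilon$, and since $\varepsilon$ is arbitrary, $u_n\to u$ in $\H_p$. I do not expect any real obstacle: the entire substance of the statement is already packaged in Theorem~\ref{thm:nyotta} (convergence of the additive stochastic convolution in the strong resolvent sense), and the rest is the same Gronwall bootstrap used throughout Section~\ref{sec:prove}; the only point deserving a moment's care is the uniformity in $n$ of the constant in $(\ref{eq:maxip}')$, which is guaranteed once we have reduced to $\eta=0$ (in general it follows from $\eta_n\le\eta$).
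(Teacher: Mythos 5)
Your proposal is correct and is precisely the argument the paper leaves implicit when it declares the result ``immediate'': the deterministic terms are handled by Lemmata \ref{lm:uno} and \ref{lm:due} exactly as in the proofs of Theorems \ref{thm:nyo2} and \ref{thm:nyop}, and the additive stochastic convolution splits into a piece controlled by Theorem \ref{thm:nyotta} and a piece controlled by the maximal inequality (\ref{eq:maxip}') with a constant uniform in $n$ thanks to $\eta_n\le\eta$, after which Gronwall closes the estimate. No gaps.
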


\begin{rmk}     \label{rmk:noloss}
  As mentioned at the beginning of Section \ref{sec:conv1}, Theorems
  \ref{thm:yo2sc}, \ref{thm:yopsc}, \ref{thm:nyo2sc} and
  \ref{thm:nyotta} remain true also assuming that $A+\eta I$ is maximal
  monotone. Let us consider the case of $A_n \to A$ in the strong
  resolvent sense. Then $y_n$ is the mild solution to
  \[
  dy_n + \tilde{A}_n y_n\,dt - \eta y_n\,dt = B\,dM, \qquad y_n(0)=y_0,
  \]
  where $\tilde{A}_n:=A_n+\eta I$. Setting $f=f_n:=\eta I$ for all
  $n\in\enne$, the previous theorem implies that $y_n \to y$ in
  $\H_p$, with $p$ depending on the hypotheses on $B$ and $M$.
\end{rmk}

\bibliographystyle{amsplain}
\bibliography{ref}

\end{document}